\documentclass[11pt, twoside, leqno]{elsarticle}

\usepackage{amsmath,amsthm}
\usepackage{amssymb,hyperref}
\usepackage{stmaryrd}
\usepackage{shapepar,bm}
\hypersetup{
    colorlinks = true,
    linkcolor = blue,
    anchorcolor = red,
   citecolor = blue,
    filecolor = red,
    pagecolor = red,
    urlcolor = blue}

\usepackage{enumitem}

\usepackage{graphicx}


\def\V{\mathcal V}

\def\id{\operatorname{id}}


\makeindex


\newcounter{paranum}[section]{}

\setcounter{paranum}{\value{section}}

\newcommand{\etype}[1]{\renewcommand{\labelenumi}{(#1{enumi})}}
\def\eroman{\etype{\roman}}
\def\wbx{{[\mathbf x]}}
\newcommand{\Net}{\mathbb N}

\def\ind{\mathrm{ind}}

\def\tbw{{{\widetilde{\bw}}}}

\def\Vreg{{ V}}
\def\ovDc{\overline{\mathcal D}}
\def\Hom{\operatorname{Hom}}
\newcommand{\trop}[1]{\mathcal{#1}}

\def\adL{{\operatorname{ad}}\,L}
\def\ad{\operatorname{ad}}

\newcommand{\one}{1}
\newcommand{\zero}{0}

\def\Wcal{\mathcal W}

\newcommand{\tT}{\trop{T}}

\def\wb{[\bfb]}
\def\gl{\mathrm{gl}}

\def\tTA{\tT_{\mathcal A}}
\def\NN{\mathbb N}

\def\ovsig{\overline{\sigma}}
\def\sg{\sigma}

\def\bfb{{\mathbf b}}
\def\Bb{{\mathcal B}}

\def\bfv{{\mathbf v}}

\def\Wcal{{\mathcal W}}
\def\Bcal{{\mathcal B}}
\def\Ccal{{\mathcal C}}

\def\d{\partial}

\def\V { V}

\def\bfx{{\mathbf x}}

\def\Sym{{\mathrm{Sym}}}

\def\ZZ{\mathbb Z}

\def\QQ{\mathbb Q}

\def\cocoa{{\hbox{\rm C\kern-.13em o\kern-.07em C\kern-.13em o\kern-.15em A}}}

\def\Dcal{\mathcal D}

\def\Acal{{\mathcal A}}
\def\BSV{\overline{S(\Vreg)}}



\def\End{\mathrm{End}}

\def\blamb{{\bm \lambda}}
\def\bmu{{\bm \mu}}

\def\Pcal{{\mathcal P}}

\def\w2M{\bigwedge^2M}

\def\w{\wedge }
\def\bw{\bigwedge }

\protect
\protect
\protect \protect

\protect
\protect

\def\sra{\rightarrow}
\def\lra{\longrightarrow}

\def\proof{\noindent{\bf Proof.}\,\,}
\def\qed{{\hfill\vrule height4pt width4pt depth0pt}\medskip}
\def\be{\begin{equation}}
\def\ee{\end{equation}}
\def\bclm{\begin{claim}}
\def\eclm{\end{claim}}
\def\beqn{\begin{eqnarray}}
\def\eeqn{\end{eqnarray}}
\def\beqn*{\begin{eqnarray*}}
\def\eeqn*{\end{eqnarray*}}







\pagestyle{myheadings} \markboth{A. Chapman, L. Gatto, L.
Rowen}{Clifford Semialgebras}



\newtheorem{theorem}{Theorem}[section]

\newtheorem{lemma}[theorem]{Lemma}

\newtheorem{constr}[theorem]{Construction}
\newtheorem{cond}[theorem]{Conditions}
\newtheorem{lem}[theorem]{Lemma}
\newtheorem{proposition}[theorem]{Proposition}
\newtheorem{prop}[theorem]{Proposition}



\theoremstyle{definition}
\newtheorem{definition}[theorem]{Definition}
\newtheorem{defn}[theorem]{Definition}
\newtheorem{claim}[theorem]{}

\newtheorem{remark}[theorem]{Remark}
\newtheorem{rem}[theorem]{Remark}
\newtheorem{example}[theorem]{Example}



\numberwithin{equation}{section}


\frenchspacing

\textwidth=13.5cm \textheight=23cm
\parindent=16pt
\oddsidemargin=-0.5cm \evensidemargin=-0.5cm \topmargin=-0.5cm





\begin{document}


\baselineskip=17pt


\title{Clifford   semialgebras}
\tnotetext[t1]{The research of the second author was supported by Finanziamento di Base della Ricerca,(no. 53\_RBA17GATLET). The research of the third author was supported by Israel Science Foundation Grant No. 1623/16.}
\author{Adam Chapman}
\address{School of Computer Science, Academic College of
Tel-Aviv-Yaffo, Rabenu Yeruham St., P.O.B~8401~Yaffo, 6818211,
Israel} \ead{adam1chapman@yahoo.com}
\author {Letterio Gatto}
\address{Dipartimento di Scienze Matematiche Politecnico di
Torino Corso Duca degli Abruzzi 24, 10129 Torino}
\ead{letterio.gatto@polito.it}
\author{Louis Rowen}
\address{Department of Mathematics, Bar-Ilan University,
Ramat-Gan 52900, Israel} \ead{rowen@math.biu.ac.il}


\begin{abstract}
Continuing the theory of systems, we   introduce a theory of
Clifford semialgebra systems, with application to representation
theory  via Hasse-Schmidt derivations on exterior semialgebras. Our
main result, after the construction of the Clifford  semialgebra, is
a formula describing the exterior semialgebra as a representation of
the Clifford semialgebra, given by the endomorphisms of the first
wedge power.
\end{abstract}
\begin{keyword}
Clifford semialgebras, exterior semialgebras, Schubert derivations,
exterior semialgebra  representation of endomorphisms, bosonic
vertex operator representation of Lie semialgebras of endomorphisms.
\MSC[2010] Primary 15A75; Secondary 17B69, 14M15, 05E05
\end{keyword}
\maketitle
\thanks{$^*$The first author was supported by the Israel Science Foundation grant 1623/16}
\thanks{The authors thank the referee for a careful reading, and for
sound advice on improving the presentation.} 

\tableofcontents

\section{Introduction}
\numberwithin{equation}{section}
The main purpose of this paper is twofold. On one hand we intend to
walk our first steps  towards a representation theory of Lie
semialgebras, in the sense of \cite[Section 8.3]{Row16}, by giving a
closer look to the case of Clifford  semialgebras, introduced in
Section \ref{sec:sec5}, where we work out the first theoretical
basics. Secondly,  we put our abstract picture at work in section
\ref{def:HSD} and \ref{8mnthm}, by providing a proper framework to our Theorem
\ref{thm432}, which computes, in the same spirit of \cite{BeCoGaVi}
and \cite{BeGa}, within the classical algebra framework, the shape of a
generating function that describes the exterior
$\Acal$--semialgebra, introduced and studied in \cite{GaR}, as a
representation of endomorphisms of a free $\Acal $--module.   The
aim is that of  excavating the semialgebraic  roots of classical
mathematical
 phenomenologies, to investigate to which extent certain
 classical results
in the theory of representation of certain infinite dimensional Lie
algebras can be extended to a tropical context, in which one
typically works with modules over semialgebras.

Tropical algebra has been used in a variety of applications; one of
them, relevant to this paper is the use of a tropical exterior
algebra in matroid theory in~\cite{GG}. The approach here is in the
more general framework of ``triples,'' ``surpassing relations'' and
``systems,'' as explained in \cite{JuR1, Row16, Row17}, which
unifies the classical theory and tropical theory and other examples
including hyperfields, as explained in \cite{AGR}, and provides a
framework  for studying linear algebra and module theory \cite{JMR}
in semialgebras, by providing a formal substitution for negation. In
brief, one starts with a semimodule spanned by a distinguished set
of ``tangible'' elements. ``Triples'' are endowed with an abstract
negation map, and ``systems'' are triples endowed with ``surpassing
relations,'' which often replace equality in classical theorems. Our
main result is Theorem \ref{thm432}, in which we propose a more
transparent analog for the ``Fermionic version'' of the generating
function that Date, Jimbo, Kashiwara and Miwa (DJKM) \cite{DJKM}
provided to describe the representation of Lie algebras of matrices
of infinite size having all  but finitely many entries zero.  (See
\cite[Section 5.3]{KRR} for a concise readable account and
\cite{SDIWP} and \cite{BeGa} for generalizations.)

 To be more precise, we explicitly describe the product of elementary
matrices by the basis elements of our exterior semialgebra. To do so
in a uniform way, we use the actions on the basis elements by the
generating functions of the elementary matrices. This procedure
which is classical and widely used, requires  care in the
semialgebra situation, due to many minor verifications needed to
show that many equalities that hold true in the classical and
well-established case can be replaced by {\em surpassing relations}.
We
 put the basis element in a generating function to get a uniform (and not a case by case) description.
See Section~\ref{8mnthm} for more explanation.


Additional abundant sources of motivation come
from other places in well established literature, showing the
 potential of our subject to interact with other parts of
mathematics. One first remark in this sense is  that most  scholars
working in the theory of symmetric functions are nowadays familiar
with the result by Laksov and Thorup \cite{LakTh1} showing that the
ring $S:=\ZZ[e_1,\ldots,e_r]$ of (commutative) polynomials in $r$
indeterminates  is isomorphic, as a $\ZZ$-module, to the $r$-th
exterior power $\bw^r\ZZ[X]$.  Recall that the  former is endowed
with its distinguished basis of  Schur polynomials parametrized by
partitions of length at most $r$. A deep theorem says that the
product of two Schur polynomials is an integral non-negative linear
combination  of Schur polynomials \cite[p.~142]{Macdonald}, i.e. all
the Littlewood-Richardson coefficients are non-negative. Since the
Schur polynomials are parametrized by partitions, one may define the
module $\bw^r\NN[X]$ over the semiring $\NN$, as the $\NN $--module
generated by partitions of length at most~$r$. Therefore the direct
sum $\bw \NN[X]:=\bigoplus_{r\geq 0}\bw^r\NN[X]$ can be thought of
as a prototype of the exterior semialgebra to which we add a further
structure of negation naturally provided by the switch map defined
in degree $\geq 2$, following the philosophy of \cite{JuR1,Row16}.

In the present paper the negation map is extended to degrees $1$ and
$0$ via a trick using operators. In conclusion, the construction of
the exterior semialgebra  alluded to above, besides being
interesting in its own right, is natural and motivated by the
semialgebra of polynomials. This was performed in detail in
\cite{GaR}, giving as an application the extension of the
Cayley-Hamilton theorem in the case of endomorphisms of
semialgebras.

Looking more closely at the semialgebra of endomorphisms of the
exterior semialgebra, it is easy to realize that it naturally
satisfies certain properties (namely, the axioms listed in
Section~\ref{sec:sec5}) which inspired our construction of the
Clifford semialgebras as an abstract framework in which to put our
concrete observations. On one hand this should not be surprising,
because it agrees with the known  classical picture concerning usual
exterior algebras, but on the other hand we were impressed that the
same yoga works, taking care about  some delicate technical detail,
in a more encompassing context. Accordingly, in this paper some
basic representation theory of the Lie semialgebra of the
endomorphisms of the free module $\Acal[x]$ over a semialgebra
$\Acal$ is then proposed. This program could be launched because the
apparatus of Hasse-Schmidt derivations on exterior algebras (as in
\cite{GaSa}) and of the distinguished ones, called {\em Schubert},
can be extended almost verbatim to the case of exterior
semialgebras. The  price we  pay is  in replacing (but nontrivially,
see e.g., Proposition \ref{prop424})  equality by the surpassing
relation. Hasse-Schmidt derivations, in our context, are formal
power series with coefficients in the endomorphisms of the exterior
semialgebra. The latter, in a nutshell,  can be roughly thought of
as the free $\Acal$ module generated by the set $\Pcal$ of all
partitions, additionally endowed with a negation. The reason why the
Schubert derivation $\sigma_+(z)=\sum_{i\geq 0}\sigma_iz^i$ is so
called   is that $\sigma_i\blamb=\sum \bmu $, where $\bmu$ runs over
all the partitions $(\mu_1,\ldots,\mu_r)$ such that
$\mu_1\geq\lambda_1\geq\cdots\geq \mu_r\geq \lambda_r$ and
$|\bmu|=|\blamb|+i$; this is the classical rule of Pieri holding in
Schubert calculus when multiplying cohomology classes in
Grassmannians.

 In \cite{GaSa2} one deals with finite wedge powers of
possibly finite dimensional vector spaces, and the DJKM
representation translates to proclaim that the singular cohomology
of the Grasmannian  is a module over the Lie algebra of  $n\times n$
matrices. We extend this result (and hence, in a sense, also  DJKM)
to the case of the Lie semialgebra of the endomorphisms of
$\Acal[x]$. Most of the equalities  are replaced by surpassing
relations, which are among the key notions of the theory of systems.
Another detail in the context of this paper -- we develop a theory
of power series over the exterior  semialgebra, viewed in terms of
negation maps.

\paragraph{\bf Plan of the Paper}  The plan of the paper is as
follows.
 Section~\ref{prelim} contains the preliminaries concerning the systemic
 (and thus tropical) framework
 developed along the lines of \cite{Row16,Row17} and references therein.
Basic motivating examples are given in Section \ref{motivv}.

Section \ref{sec:sec1}  reviews the construction of the exterior
semialgebra from~\cite{GaR}, and recasts it in terms of the regular
representation (Definition~\ref{regrep}), which utilizes the
endomorphism system
 of (\S\ref{Endsys}). We are able thereby to
show how the tensor functor  takes  modules to
 triples (Theorem~\ref{exth}).

 Section \ref{sec:sec5} is devoted to studying Clifford semialgebras in
 general,
  to provide the appropriate framework for the
 second part of the paper.  The  definition of Clifford
semialgebra in Definition~\ref{Cliff1} is rather concise, and a
generic  construction is proposed in~\ref{stdG}, but, in contrast to
the exterior semialgebra, complications arising when attempting to
introduce the negation map, cf.~Construction~\ref{stdGN}. Thus we
must examine the situation both with and
 without a negation map. The abstract
 construction is given in Construction~
 \ref{stdG},
  whereas the natural model of Clifford semialgebras arising from the
 exterior semialgebra is given already in
 Section~\ref{sec:sec1}.
(In contrast with other applications of systems, we need a
generalization incorporating the ``surpassing relation'' $\preceq$
in our negation map,
 to obtain what here we call $\preceq$-systems.)
 In the
case of polynomial semirings, i.e. $\Acal[x^1,\ldots,x^n]$, where
$\Acal$ is a semialgebra, we obtain the Weyl semialgebra,  the
``symmetric'' counterpart of the Clifford semialgebra seen as a
semialgebra of endomorphisms of the exterior semialgebra. The ``base
square-free'' symmetric  semialgebra (the Giansirancusa tropical
exterior semialgebra of \cite{GG}), an epimorphic image of
$\Acal[x^1,\ldots,x^n]$, is obtained by setting all the squares of
basis elements to zero.


  We then repeat the constructions of Section~\ref{sec:sec5} in the case of the
  base square-free symmetric semialgebra,
  and much of the combinatorial formalism for exterior semialgebras can be mimicked in this case.
  We  describe the semialgebra of  endomorphisms of the tropical Grassmannian,
   and  designate it as the tropical Clifford semialgebra.

In Section \ref{sec:sec6} we study the relevant example of the
exterior semialgebra
  as a regular representation of a canonical Clifford semialgebra, which arises naturally in a concrete context. As shown in the subsequent section, this is also relevant for application to representation theory and mathematical
  physics.


Vertex operators are also defined in this context in
Section~\ref{def:HSD}, and they take a classical familiar form over
a semialgebra $\Acal$ which contains the non-negative rational
numbers. Our culminating result is Theorem \ref{thm432}, our version
of \cite[Theorem 6.4]{GaSa2} or \cite[Theorem 5.9]{BeNa}. It
provides a formula expressing the generating function of the action
of the Lie semialgebra of endomorphisms on fixed degree elements of
the exterior semialgebra by means of the elementary matrices
$E_{ij}$. Clearly this suffices to fully describe the module
structure as each endomorphism is a finite linear combination of the
elementary ones.

\noindent

   \section{Preliminaries and Notation}\label{prelim}

As usual we denote as $\NN$  the monoid of  natural numbers
(including $0$), and $\NN^+:= \NN \setminus \{0\}.$ Throughout
$\Acal$ denotes a fixed commutative associative base semiring
cf.~\cite{golan92} (i.e., satisfies all the axioms of ring except
negatives, e.g.~$\NN$ or $\QQ_{+}$ or the max-plus algebra). Let
$n\in\NN\cup\{\infty\}$. Modules are defined as usual.
$(\tT,\cdot,\one)$ denotes an Abelian multiplicative monoid with
unit element $\one.$

\begin{defn}\label{modu131}
A (left) $\tT$-\textbf{(monoid) module}   is an additive monoid
$(V,+,\zero _V)$ together with scalar multiplication $\tT\times V
\to V$ satisfying distributivity over $\tT$  in the sense that
$$a(v_1+v_2) = av_1 +av_2,\quad  \forall a \in \tT,\ v _i \in V
,$$ also stipulating that $a\cdot\zero_{V} = \zero_{V}$ for all $a$
in $\tT$, $\one_\tT v =v$, and $(a_1a_2)v = a_1 (a_2 v)$, for all
$a_i \in \tT$ and $v\in V$.


%
%
\end{defn}

 $\tT$-\textbf{semialgebras} are
$\tT$-modules which are semirings. $\mathcal A$ will always be a
commutative
 $\tT$-\textbf{semialgebra}.   $\End_{\mathcal A} V$ denotes the semialgebra of
module homomorphisms $V \to V,$ i.e., group homomorphisms $f:V\to V$
satisfying $f(av) = af(v), \forall a\in \mathcal A, v\in V.$ For
notational convenience, we designate $\zero_V \in \End _\mathcal A
V$ for the $0$ homomorphism, i.e., $\zero_V\cdot V = \zero.$ Also
there are maps $\mathcal A \to \End _\mathcal A V$ sending $a\mapsto
l_a,$ the left multiplication map $v\mapsto av,$ and when $V$ is a
semialgebra, $ V\to  \End _\mathcal A  V$ given by $v \mapsto l_v.$

\begin{defn}\label{modu13} The free $\Acal$-module of countable rank is denoted as $$V=\Acal[x]:=\bigoplus_{i\geq 0}\Acal x^i.$$ If $J$ is any countable index set, we
denote the factor module
$$
V_J:={V\over \sum \Acal x^i:i\notin J}
$$
If $J=[n]=\{0\leq i<n\}$, for $n\in\NN\cup \infty$, we write $V_n$
for $\bigoplus_{0\leq i<n}\Acal x^i$, and its base will be denoted
$\bfx^n$.
\end{defn}
Every element of $V$ is a polynomial with coefficients in $\Acal$,
i.e. a finite linear combination of powers of $x$.
 This can
be thought of as $\displaystyle{{\mathcal A[x]} \over {\sum \Acal
x^j: j \ge n}.}$ \footnote{Technically  we are dealing with
congruences, so the notation, which we use  repeatedly, means that
we are modding out the congruence generated by all $(x^j,\zero),\ j
\ge n.$   }
%

\begin{defn}\label{negmap}
 A \textbf{negation map} on  a $\tT$-module $V$
is an injective semigroup homomorphism $(-) :V\to V$ of order $\le
2$, together with a map  $(-) :\tT\to \tT$ also of order $\le 2$,
written $v\mapsto (-)v$, satisfying
\begin{equation}\label{eq:I.1.110} (-)((-)v) = v, \quad
((-)a)v = (-)(av) = a ((-)v),\quad\forall a\in \tT,\quad v \in
V.\end{equation} When $V $ is a semialgebra $\mathcal A,$ we require
\eqref{eq:I.1.110} for all $a ,v \in \mathcal A $.

 We write $v(-)w$ for $v +(-)w,$ and
$v^\circ$ for  $v(-)v$, called a \textbf{quasi-zero}. $V^\circ: = \{
v^\circ: v\in V\}.$  A \textbf{submodule with negation map} is a
submodule $W$ of $V$ satisfying $(-)W = W.$
\end{defn}

\begin{lem}  If a $\mathcal A$-module $V$ has a   negation  map $(-)$,
then: \begin{enumerate}\eroman
 \item $V^\circ $ is a submodule of $V$.
 \item $V\times V$ is a module with  negation, under the
 diagonal action.
 \item $\End _{\mathcal A} \, V$
has a  negation map given by
$$((-)f) (v) =(-)(f(v)).$$\end{enumerate}
\end{lem}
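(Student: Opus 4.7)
The plan is to verify each of the three items by a direct pointwise computation from Definition~\ref{negmap}, relying on $(-)$ being an order-$\le 2$ semigroup homomorphism and on the compatibility \eqref{eq:I.1.110} between $(-)$ on $\tT$ and on $V$.

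For (i), I would write $v^\circ = v + (-)v$ and use additivity of $(-)$ to obtain $v^\circ + w^\circ = (v+w) + (-)(v+w) = (v+w)^\circ$, so $V^\circ$ is closed under addition. Using \eqref{eq:I.1.110}, $a\, v^\circ = av + a((-)v) = av + (-)(av) = (av)^\circ$, and $(-)v^\circ = (-)v + (-)((-)v) = (-)v + v = v^\circ$; thus $V^\circ$ is a submodule stable under $(-)$. For (ii), I would define $(-)(v_1,v_2):=((-)v_1,(-)v_2)$ and the diagonal action $a(v_1,v_2):=(av_1,av_2)$; the order-$\le 2$ condition, the semigroup homomorphism property, injectivity, and \eqref{eq:I.1.110} all transfer componentwise from $V$, so nothing new needs to be checked beyond recording the two coordinate statements simultaneously.

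For (iii), I would set $((-)f)(v) := (-)(f(v))$ and first check that $(-)f\in\End_{\mathcal A} V$: it is additive because $f$ and $(-)$ are, and $\mathcal A$-linear because
\[
((-)f)(av) \;=\; (-)(f(av)) \;=\; (-)(af(v)) \;=\; a((-)(f(v))) \;=\; a\bigl(((-)f)(v)\bigr),
\]
invoking \eqref{eq:I.1.110} at the middle step. The remaining axioms are pointwise consequences: $(-)\circ(-) = \id$ on $V$ yields $(-)((-)f) = f$; additivity of $(-)$ on $\End_{\mathcal A} V$ follows from additivity on $V$; the scalar compatibility $((-)a)f = (-)(af) = a((-)f)$ is \eqref{eq:I.1.110} applied at each $v$; and if $(-)f = (-)g$ then $(-)(f(v)) = (-)(g(v))$ for every $v$, so injectivity on $V$ forces $f = g$. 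The only real obstacle is bookkeeping---keeping the negation on the scalar monoid $\tT$ notationally separate from the one on $V$ while exploiting that they are interlocked by \eqref{eq:I.1.110}; once this is under control, each clause is a one-line pointwise verification.
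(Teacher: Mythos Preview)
Your proposal is correct and matches the paper's approach: the paper's proof is literally the single line ``Easy verifications,'' and you have supplied exactly those verifications in full. Nothing further is needed.
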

\begin{proof} Easy verifications.
\end{proof}

 \begin{defn}\label{sursys0}
 A \textbf{triple}
is a collection $(\mathcal A, \tTA, (-)),$  where   $\mathcal A$ is
a $\tT$-module and $(-)$ is a negation map on  $\mathcal A$, and  $
\tTA$ is a subset of $\mathcal A$ closed under~$(-)$, satisfying:
\begin{enumerate}\eroman
 \item $\tTA \cap
\mathcal A^ \circ = \emptyset$,

 \item $\tTA \cup \{ \zero\}$
generates $( \mathcal A,+)$ additively.
\end{enumerate}

(We do not require in general that $\tTA$ is a monoid.)

\begin{lem} To verify that a group homomorphism $f:V\to V$ is in
$\End _{\mathcal A} V$ it is enough to check that $(\sum a_i)v =
\sum (a_iv)$ and
 $f(av) = af(v), \forall a_i, a\in \tT, v\in V.$
\end{lem}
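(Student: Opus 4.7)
The plan is to reduce verification of the full $\mathcal A$-linearity condition $f(av)=af(v)$ for arbitrary $a\in\mathcal A$ to the corresponding condition on the generating set $\tT$, using the hypothesis that $\mathcal A$ is a $\tT$-semialgebra and hence every element of $\mathcal A$ is additively built from elements of $\tT$. So given $a\in\mathcal A$, I first write $a=\sum_i a_i$ with each $a_i\in\tT$.

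The computation then proceeds in four short steps:
\begin{equation*}
f(av)\;=\;f\!\left(\!\Big(\sum_i a_i\Big)v\right)\;=\;f\!\left(\sum_i a_i v\right)\;=\;\sum_i f(a_i v)\;=\;\sum_i a_i f(v)\;=\;\Big(\sum_i a_i\Big)f(v)\;=\;a f(v).
\end{equation*}
The first and last equalities use the distributivity hypothesis $(\sum a_i)w=\sum (a_i w)$ of the statement (applied to $w=v$ and $w=f(v)$ respectively); the second and fourth equalities use the fact that $f$ is an additive group homomorphism; the middle equality is exactly the tangible-level linearity $f(a_i v)=a_i f(v)$ that we are assuming holds for every $a_i\in\tT$.

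There is essentially no obstacle, since both the additive structure of $f$ and the distributivity clause are built into the hypotheses; the only conceptual point is to note that the tangible-level check is \emph{sufficient} precisely because additive expressions $\sum a_i$ with $a_i\in\tT$ exhaust $\mathcal A$ (so no case involving a ``non-tangible'' scalar escapes the argument). One should, for cleanliness, remark that the same chain works equally well for left or right multiplication and does not require any negation map, so the lemma applies in the generality stated, before triples with their distinguished tangible subset $\tTA$ are introduced.
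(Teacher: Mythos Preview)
Your proof is correct and follows exactly the same approach as the paper: write an arbitrary scalar as a sum of tangibles and push $f$ through using additivity and the tangible-level hypothesis. The paper's proof is the same chain of equalities, just written more tersely (it omits your explicit mention of the second use of distributivity at the end).
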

\proof  If $b = \sum a_i$ for $a_i \in \tT$ then
$$f(bv) = f(\sum a_i v ) = \sum f(a_i v) = \sum a_i f(v) = b f(v).$$
\qed

 By
\textbf{partial order} we always mean a partial order $\preceq$ on
$\mathcal A$ as a  $\tT$-module, i.e.,  satisfying the following
conditions for all $b,b'\in \mathcal A$:

 \begin{cond}\label{condi}$ $
  \begin{enumerate}
 \eroman
  \item If $b \preceq c$ and $b' \preceq c'$   then  $b + b' \preceq c
   + c'.$
    \item   If  $a \in \tT$ and $b \preceq b'$ then $a b \preceq ab'.$
     \end{enumerate}
 \end{cond}

A \textbf{surpassing relation} is a partial order, also satisfying
\begin{cond}\label{condi1}$ $
  \begin{enumerate}
  \item [(iii)]  $b \preceq b'$
  whenever $b + c^\circ = b'$ for some $c\in \mathcal A$.
    \item  [(iv)] If $b \preceq b'$ then $(-)b \preceq (-)b'$.
 \end{enumerate}
 \end{cond}

(In other words $(-)$ is preserved under $(-)$. If $0 \preceq b'$
then $0 = (-)0 \preceq (-)b'.$)
 We also write $b \succeq b'$ to denote that $b'\preceq b.$

 A \textbf{ system}
$(\mathcal A, \tTA, (-),\preceq)$ is a  triple together with a
surpassing relation, also satisfying:
  \begin{enumerate}
   \item[(v)]    If  $a\preceq a' $ for $a,a' \in \tTA,$ then $a =  a'.$

 \item [(vi)]\textbf{unique negation}: If $a +a' \succeq \zero$ for
$a,a'\in\tTA,$ then $a'
     =(-)a.$
  \end{enumerate}
   \end{defn}

A consequence of  (i) and (iv) is   $c(-) c \succeq \zero. $ We have
(v) in order to have equality on $\tT$ generalizing classical
formulas.

   The surpassing relation is of utmost importance, since it
replaces equality in many classical formulas.

 The most common surpassing relation defined on a triple is
$\preceq _\circ $, given by $b \preceq _\circ b'$
  whenever $b + c^\circ = b'$ for some $c\in \mathcal A$.
In this case $b' \succeq 0$ iff $b' = c^\circ$ for some $ c.$

\begin{lem}\label{makesur}\begin{enumerate}\eroman
\item Conditions~\ref{condi}
 translate to: A partial order   is a subset $S$ of $\mathcal A
\times \mathcal A$ satisfying for all $b,b',c$:

  \begin{enumerate}\eroman
  \item $(b,b)\in S$.
  \item If $(b ,c)\in S$ and $(b' , c')\in S$   then  $(b + b' , c
   + c')\in S$.
    \item   If  $a \in \tT$ and $(b , b')\in S$ then $(a b , ab')\in S$.
    \end{enumerate}

Conditions~\ref{condi1} for surpassing relation translate to:
\begin{enumerate}
  \item  [(d)]   $(b,b + c^\circ) \in S$.
    \item  [(e)] If $(b , b')\in S$ then $((-)b ,(-)b')\in S$.
    \end{enumerate}
 \item Given any subset $S_0 \subseteq \mathcal A \times
\mathcal A$, we  can obtain a partial order by taking $S$ to be the
smallest $\tT$-submodule   of $\mathcal A \times \mathcal A$
 containing $S_0$. We    obtain a  surpassing
relation  by taking $S$ to be the smallest $\tT$-submodule (with
 negation) of $\mathcal A \times \mathcal A$
 containing $S_0 + (\zero \times \mathcal A ^\circ)$.
\end{enumerate}\end{lem}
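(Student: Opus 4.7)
The plan is to identify every relation $\preceq$ on $\mathcal A$ with its graph $S := \{(b,b') \in \mathcal A\times\mathcal A : b\preceq b'\}$, and then verify both parts of the lemma by direct translation.

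For Part~(i), each listed condition on $S$ is a term-by-term rewriting of the corresponding item of Conditions~\ref{condi} and~\ref{condi1}. Reflexivity $b\preceq b$ (implicit in the paper's notion of ``partial order'') becomes (i); Conditions~\ref{condi}(i)--(ii) become (ii)--(iii); Condition~\ref{condi1}(iii), $b\preceq b+c^\circ$, becomes (d); and Condition~\ref{condi1}(iv) becomes (e). Each implication is immediate in both directions, so the two formulations are literally equivalent.

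For Part~(ii), the existence of a smallest such $S$ follows from the standard observation that an arbitrary intersection of $\tT$-submodules of $\mathcal A\times\mathcal A$ is again a $\tT$-submodule, and likewise for $\tT$-submodules that are in addition closed under the diagonal action of $(-)$. In the partial-order case I take $S$ to be the smallest $\tT$-submodule of $\mathcal A\times\mathcal A$ containing $S_0$ together with the diagonal $\Delta := \{(b,b) : b\in\mathcal A\}$: then (i) holds because $\Delta\subseteq S$, while (ii) and (iii) hold because $S$ is a $\tT$-submodule. For the surpassing-relation case I take the smallest $\tT$-submodule closed under the diagonal action of $(-)$ containing $\Delta \cup (\{\zero\}\times\mathcal A^\circ) \cup S_0$. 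Condition (e) is then automatic from the closure under $(-)$, and (d) follows from the decomposition $(b, b+c^\circ) = (b,b)+(\zero, c^\circ)$ with both summands in $S$.

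There is no essential obstacle; the lemma is essentially a dictionary between relations on $\mathcal A$ and subsets of $\mathcal A\times\mathcal A$, and Part~(ii) is a universal-property generation argument. The one point worth flagging is the need to adjoin the diagonal $\Delta$ to the seed in order to secure reflexivity (i), and to interpret the paper's expression $S_0+(\zero\times\mathcal A^\circ)$ as the pointwise module-theoretic sum of sets, which together with $\Delta$ accounts for all instances of~(d).
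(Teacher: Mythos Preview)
Your proof is correct and follows the same essential approach as the paper's: both treat part~(i) as a direct dictionary translation between relations and their graphs, and part~(ii) as a generation-by-smallest-submodule argument. The paper's proof is considerably terser---it simply observes that the submodule conditions correspond to (b),(c) and that (d),(e) encode the surpassing/negation data---whereas you spell out the bijection item by item and justify the existence of the smallest submodule via intersection.

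Your flag about adjoining the diagonal $\Delta$ is well taken: reflexivity (condition~(a)) does not follow from the bare $\tT$-submodule axioms, and the paper's formulation of part~(ii) (and its one-line proof) glosses over this point. Your version, which explicitly seeds with $\Delta$ and in the surpassing case also with $\{\zero\}\times\mathcal A^\circ$, is the more careful reading of what must actually be generated.
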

\proof (i) Conditions~\ref{condi}  translate to these conditions of
(i), under the familiar interpretation of a relation on $\mathcal A$
as a subset $S$ of $\mathcal A \times \mathcal A$.

(ii) The conditions (a),(b) are the definition of submodule. (a),
(d),(e) are those involving the surpassing relation and negation
map. \qed

\begin{defn} \label{Lies2} A \textbf{systemic module} over a semiring with preorder $(\mathcal A,\preceq)$
 is an $\mathcal A$-module~$M$ together with a submodule $\tT_M$ and negation map $(-)$ and surpassing
relation $\preceq$ preserving $\tT_M$ and satisfying $(-)(ay) =
a((-)y)$ as well as $ay \preceq a'y'$  for $a \preceq a' \in
\mathcal
 A$ and $y\preceq y' \in M.$
\end{defn}

(When $\mathcal A$ has a multiplicative unit $\one$ and a negation
map then the negation map on $M$ could be given by $(-)y :=
((-)\one)y$ for $y\in M$).
 In our applications, $M$ often will be an ideal of a semiring  system $\mathcal A$.

\begin{prop}\label{Endsys}
If $M$ is a systemic module over a semiring  $\mathcal A,$ then
taking $\tT_{\End M}$ to be the set of endomorphisms sending $\tT_M$
to $\tT_M$ and $\overline{\End M}$ to be the subalgebra of $\End M$
spanned by $\tT_{\End M}$, $(\overline{\End M}, \tT_{\End M},
(-),\preceq)$ is a system where we define $(-)f: y \mapsto
(-)(f(y))$ and $f\preceq g$ when $f (y) \preceq  g(y)$ for all $y
\in M$.
\end{prop}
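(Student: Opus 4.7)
The plan is to verify each axiom of a $\preceq$-system for $\overline{\End M}$ by transferring the corresponding axiom of $M$ pointwise, via evaluation at $y \in M$, and then to invoke the additive generation of $M$ by $\tT_M \cup \{0\}$ precisely where one needs to upgrade an identity known on $\tT_M$ to one on all of $M$.

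First I would check that $((-)f)(y) := (-)(f(y))$ really is a negation map on $\overline{\End M}$: additivity, the order-at-most-two property, and the compatibility $((-)a)f = (-)(af) = a((-)f)$ each reduce pointwise to Definition~\ref{negmap} on $M$. Closure of $\tT_{\End M}$ under $(-)$ is inherited from closure of $\tT_M$, whence $\overline{\End M}$, being the span of $\tT_{\End M}$, is also closed; additive generation of $\overline{\End M}$ by $\tT_{\End M} \cup \{0\}$ is built into its definition. The one triple axiom needing care is $\tT_{\End M} \cap \overline{\End M}^{\circ} = \emptyset$: if some quasi-zero $g(-)g$ were tangible, then for any $y \in \tT_M$ the value $g(y)^{\circ} = (g(-)g)(y)$ would lie simultaneously in $\tT_M$ and in $M^{\circ}$, contradicting the triple axiom for $M$.

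For the surpassing relation itself, Conditions~\ref{condi}(i),(ii) and \ref{condi1}(iii),(iv) transfer pointwise almost automatically; for instance (iii) reads that $f + h^\circ = g$ in $\overline{\End M}$ implies $f(y) + h(y)^\circ = g(y)$ in $M$ for every $y$, giving $f(y) \preceq g(y)$ by (iii) on $M$ and hence $f \preceq g$. Reflexivity, transitivity, and antisymmetry of $\preceq$ on $\overline{\End M}$ are likewise inherited pointwise from the corresponding properties on $M$.

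The one genuinely delicate step is axioms (v) and (vi) of a system. For (v), if $f \preceq g$ with $f, g \in \tT_{\End M}$ and $y \in \tT_M$, then $f(y) \preceq g(y)$ between tangibles of $M$ forces $f(y) = g(y)$ by (v) on $M$; I would then extend this to all $y \in M$ by writing $y = \sum y_i$ with $y_i \in \tT_M$ and invoking the additivity of $f$ and $g$ to obtain $f(y) = \sum f(y_i) = \sum g(y_i) = g(y)$. Axiom (vi) is handled by a strictly parallel argument using unique negation on $M$. The main obstacle, conceptually, is the disjointness $\tT_{\End M} \cap \overline{\End M}^{\circ} = \emptyset$ in the first step; every other axiom is a transparent pointwise transfer, with additive generation doing the final mopping-up in (v) and (vi).
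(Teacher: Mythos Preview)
Your verification is correct and complete. The paper's own proof consists of the two words ``Easy verification,'' so you have in fact supplied considerably more detail than the authors did; your pointwise-transfer strategy is exactly the natural way to unpack that phrase, and there is nothing to compare.

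One small remark: in your disjointness argument for $\tT_{\End M} \cap \overline{\End M}^{\circ} = \emptyset$, you implicitly use that $\tT_M$ is nonempty (otherwise there is no $y$ at which to evaluate). This is harmless, since if $\tT_M = \emptyset$ then $M = \{0\}$ and everything trivializes, but it is worth being aware of.
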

\proof Easy verification.
 \qed

$(\overline{\End M}, \tT_{\End M}, (-),\preceq)$ is called the
\textbf{endomorphism system} of $M$.

 Recall from
\cite[Definition~2.37]{JuR1} that the systemic $\mathcal A$-modules
comprise a category, where a morphism, which we will call a
\textbf{$\preceq$-morphism}
$$f: (\mathcal M, \tT_{\mathcal M}, (-), \preceq)\to (\mathcal M',
\tT_{\mathcal M'} , (-)', \preceq')$$ is a map $f: \mathcal M \to
\mathcal M'$ satisfying the following
properties  for $a  \in \Acal$ and   $c,  c_i$  in~$\mathcal M$:
\begin{enumerate}\eroman
\item
$f (\zero ) = \zero  .$
\item $ f((-)c_1)=   (-)
f(c_1);$
\item $ f(c_1 + c_2) \preceq ' f(c_1) + f( c_2) ;$
\item  $ f(a  c)=  a   f( c) $.
\item $ f(c_1) \preceq ' f(c_2)$ if  $c_1\preceq c_2.$
\end{enumerate}

%


\claim{\bf The standard dual base.}\label{dub}
 The  \textit{standard dual base} $ {\bm\d}=(\d^j)_{j\in \NN} \subset \Hom (V,\Acal)$ of $(x^i)_{i\in
 J}$ is defined by
$\d^jx^i=\delta_{ij}$. In the classical case where $\Acal$ contains
the positive rationals, it may be identified with the differential
operator
$$
\d^j:=\left.{1\over j!}{\d\over \d x^j}\right|_{x=0}
$$
which motivates the notation which we use to emphasize  the analogy
with the   Weyl  algebra acting on $\Acal[x]$, generated by the
multiplication by $x$ and the partial derivative $\d^1$ subject to
the relation $x\d ^1=\d ^1 x+\mathrm{id}$.  $V^*:=\bigoplus_{j\geq
0}\Acal \d^j$ is called the \textbf{restricted dual} of $V$.
Define an action $\, \lrcorner \,$ of $\mathcal V ^*$ on $ \mathcal
V $ as:
$$
\d\, \lrcorner \, v=\d(v),\qquad\qquad \forall (\d,v)\in\mathcal V
^*\times\mathcal V .
$$

\section{Main motivating examples}\label{motivv}

Here are the examples to be used throughout this paper.

\claim{\bf The regular representation.} We  present a way of embedding a semialgebra into a system, using
Proposition~\ref{Endsys}.

\begin{definition} \label{regrep}  Suppose $V$ is a module over an $\Acal$-semialgebra $\mathcal A.$ The \textbf{regular representation} $\Psi: \mathcal A  \to  \End_\Acal V$ sends
$b\in \mathcal A$ to the map $\Psi(b): v\mapsto bv.$
\end{definition}

As in usual algebra, this maps $\mathcal A$   into $\End_\Acal V$,
and is an injection when $V$ is faithful over $\mathcal A$, which
will always be the case in this paper.

\begin{lemma}\label{reg2} When $(V,\tT_V,(-),\preceq)$ is a systemic
module, the image of $\mathcal A$   in the system
$(\overline{\End_\Acal V}, \tT_{\End_\Acal V},(-),\preceq)$ is a
subsystem.
\end{lemma}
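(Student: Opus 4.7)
The plan is to check that the image $\Psi(\mathcal A)$, taken together with its closure under the ambient negation $(-)$, inherits each axiom of a system by restriction from the endomorphism system constructed in Proposition \ref{Endsys}. Since Proposition \ref{Endsys} already establishes that $(\overline{\End_\Acal V}, \tT_{\End_\Acal V}, (-), \preceq)$ is a system, the task reduces to verifying that $\Psi(\mathcal A)$ (suitably enlarged) sits inside it as a sub-triple on which $\preceq$ still behaves.

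First, I would verify that $\Psi$ is a semialgebra homomorphism: for $a, b \in \mathcal A$ and $v \in V$, the module axioms give $\Psi(a+b)(v) = (a+b)v = av + bv = (\Psi(a)+\Psi(b))(v)$, and similarly $\Psi(ab) = \Psi(a)\circ\Psi(b)$. Since $V$ is a systemic module, $a \cdot \tT_V \subseteq \tT_V$, so $\Psi(a) \in \tT_{\End_\Acal V}$; thus the image lands inside the tangible submodule used to build $\overline{\End_\Acal V}$.

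Second, I would show that the ambient negation closes on the image: the identity $(-)(av) = a((-)v)$ from \eqref{eq:I.1.110} gives $((-)\Psi(a))(v) = (-)(av) = a((-)v) = \Psi(a)((-)v)$, so $(-)\Psi(a)$ equals $\Psi(a)$ composed with the $V$-negation and lies in $\overline{\End_\Acal V}$. The subsystem in question is then the additive closure of $\Psi(\mathcal A) \cup (-)\Psi(\mathcal A)$ inside $\overline{\End_\Acal V}$, with tangible set $\Psi(\mathcal A) \cup (-)\Psi(\mathcal A)$, evidently closed under $(-)$ and generating the submodule additively.

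Third, I would verify axioms (i)--(vi) of Definition \ref{sursys0} by restriction. Disjointness of tangibles from quasi-zeros (i), and unique negation (vi), pass from $\overline{\End_\Acal V}$ to any sub-triple containing its own tangibles; additive generation (ii) holds by construction; the conditions \ref{condi} and \ref{condi1} on $\preceq$ restrict automatically to any submodule with negation; and axiom (v) holds because equal-on-$V$ tangibles in the image are already equal in $\tT_{\End_\Acal V}$.

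The only step requiring genuine care is the second, ensuring that the subsystem is actually closed under $(-)$ even when $\mathcal A$ itself may lack a negation map; this is exactly what the compatibility \eqref{eq:I.1.110} of the negation on $V$ with the $\mathcal A$-action rescues. Once this is observed, the remaining conditions are inherited by a routine restriction argument, matching the paper's terse ``Easy verification.''
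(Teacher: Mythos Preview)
Your proposal is correct and follows essentially the same approach as the paper's two-line argument: the paper checks closure under the ambient negation (its ``$(-)\Psi(b):v \to (-)bv$'' is precisely your second step, using $(-)(bv)=b((-)v)$) and then notes that $\Psi(b)=\sum_i\Psi(a_i)$ lands in $\overline{\End_\Acal V}$, leaving the remaining system axioms to the ambient structure of Proposition~\ref{Endsys}. Your version is a careful expansion of this, and your explicit identification of the subtlety---that one must close $\Psi(\mathcal A)$ under $(-)$ when $\mathcal A$ itself carries no negation---makes precise what the paper leaves implicit.
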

\proof $(-)\Psi(b):v \to (-)v.$ If $b =\sum_{i=1}^t a_i$ then
$\Psi(b)= \sum_{i=1}^t \Psi (a_i) \in \overline{\End_\Acal V}.$ \qed

  Lemma~\ref{reg2} will be used to put our basic notions (exterior semialgebra, Clifford
  semialgebra) in the context of systems. We turn to the notion of
  Lie semialgebra with a preorder.

\begin{defn} \label{Lies0}  A \textbf{Lie $\preceq$-semialgebra  with a preorder $\preceq$}  is an $\mathcal A$-module $L$ with a negation map $(-)$, endowed with
 an $\mathcal A$-bilinear \textbf{$\preceq$-{Lie bracket}}
$[\phantom{w}\phantom{w}]$
 satisfying, for all $x,y\in L$:
\begin{enumerate}\eroman
 \item $[xy] +[yx]\succeq \zero_L,$

 \item $[x x] \succeq \zero_L,$

\item $\ad_{[x y]} + \ad _y \ad _x \preceq
\ad _x \ad _y , $ where $\ad _x(z) = [xz].$
\end{enumerate}
\end{defn}

It is more intuitive to work with systemic modules.

\begin{defn} \label{Lies} A \textbf{systemic Lie $\preceq$-semialgebra}  is a systemic $\mathcal A$-module $L$ with a negation map $(-)$, endowed with
 an $\mathcal A$-bilinear \textbf{$\preceq$-Lie bracket}
$[\phantom{w}\phantom{w}]$
 satisfying, for all $x,y\in L$:
\begin{enumerate}\eroman
 \item $[xy] = (-)[yx],$

 \item $[x x] \succeq \zero_L,$

\item $\ad_{[x y]} \preceq
\ad _x \ad _y (-) \ad _y \ad _x, $ where $\ad _x(z) = [xz].$
\end{enumerate}
\end{defn}

\begin{lem} Any systemic Lie $\preceq$-semialgebra is a Lie
$\preceq$-semialgebra in the sense of Definition~\ref{Lies0}.
\end{lem}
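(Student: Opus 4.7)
The plan is to derive each of the three axioms of Definition~\ref{Lies0} from the corresponding ones of Definition~\ref{Lies}. Condition~(ii) appears verbatim in both definitions, so there is nothing to prove there.

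For condition~(i), I would exploit the stronger systemic identity $[xy] = (-)[yx]$ of \ref{Lies}(i). Adding $[yx]$ to both sides and using the compatibility of $(-)$ with addition, one gets $[xy] + [yx] = (-)[yx] + [yx] = [yx]^\circ$. Then Condition~\ref{condi1}(iii), applied with $b = \zero_L$, $c = [yx]$, and $b' = [yx]^\circ$, yields $\zero_L \preceq [yx]^\circ$, which is precisely $[xy] + [yx] \succeq \zero_L$.

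For condition~(iii), I would begin from the systemic axiom $\ad_{[xy]} \preceq \ad_x\ad_y\,(-)\,\ad_y\ad_x$ and add $\ad_y\ad_x$ to both sides, combining \ref{Lies}(iii) with the reflexive inequality $\ad_y\ad_x \preceq \ad_y\ad_x$ through Condition~\ref{condi}(i). This produces $\ad_{[xy]} + \ad_y\ad_x \preceq \ad_x\ad_y + (\ad_y\ad_x)^\circ$, where I have regrouped $(-)\ad_y\ad_x + \ad_y\ad_x$ into the quasi-zero $(\ad_y\ad_x)^\circ$. All these manipulations take place in the endomorphism system of $L$ from Proposition~\ref{Endsys}, where the surpassing relation is evaluated pointwise on $M$, and the negation and addition of endomorphisms behave as described in the lemma preceding Definition~\ref{sursys0}.

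The main obstacle I anticipate is the final step: moving from $\ad_{[xy]} + \ad_y\ad_x \preceq \ad_x\ad_y + (\ad_y\ad_x)^\circ$ to the cleaner $\ad_{[xy]} + \ad_y\ad_x \preceq \ad_x\ad_y$ asserted in \ref{Lies0}(iii). This requires absorbing the trailing quasi-zero into the relation $\preceq$, and must be justified using Condition~\ref{condi1}(iii)--(iv) together with the compatibility properties of $(-)$ and $\preceq$ on the endomorphism system; this is the only place where the passage from the negation-aware formulation to the preorder-only formulation is nontrivial, and it is precisely the role played by the surpassing relation framework in translating identities into $\preceq$-relations modulo quasi-zeros.
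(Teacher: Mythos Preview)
Your handling of (i) and (ii) is correct and essentially matches the paper: from $[yx]=(-)[xy]$ one gets $[xy]+[yx]=[xy]^\circ\succeq\zero_L$, and (ii) is identical in both definitions.

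For (iii), however, you have correctly located a genuine obstruction rather than a technicality. The step you propose---passing from
\[
\ad_{[xy]}+\ad_y\ad_x \preceq \ad_x\ad_y+(\ad_y\ad_x)^\circ
\]
to $\ad_{[xy]}+\ad_y\ad_x\preceq\ad_x\ad_y$ by ``absorbing'' the quasi-zero---cannot be carried out with the axioms available. Condition~\ref{condi1}(iii) gives $b\preceq b+c^\circ$, which points the \emph{wrong} way: it yields $\ad_x\ad_y\preceq\ad_x\ad_y+(\ad_y\ad_x)^\circ$, not the reverse, and nothing in Conditions~\ref{condi}--\ref{condi1} permits dropping a quasi-zero from the larger side of a surpassing relation. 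So your final paragraph does not close the gap; it names it.

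It is worth observing that the paper's own displayed argument for (iii),
\[
\ad_{[xy]} \preceq \ad_{[xy]}+\ad_y\ad_x(-)\ad_y\ad_x \preceq \ad_x\ad_y(-)\ad_y\ad_x,
\]
does not establish \ref{Lies0}(iii) either: its conclusion is precisely \ref{Lies}(iii), the hypothesis, and the second inequality in the chain is what one obtains by adding $(-)\ad_y\ad_x$ to both sides of \ref{Lies0}(iii) rather than by assuming \ref{Lies}(iii). In other words, the paper's chain runs in the direction \ref{Lies0}$\Rightarrow$\ref{Lies}(iii), not the direction asserted by the lemma. Your difficulty is therefore not an artifact of your approach but appears intrinsic to the claim.
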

\begin{proof} $[xy]-[yx]= [xy](-)[xy] \succeq \zero;$ $$\ad_{[x y]}
\preceq \ad_{[x y]} + \ad _y \ad _x (-)\ad _y \ad _x  \preceq \ad _x
\ad _y (-)\ad _y \ad _x .$$
\end{proof}

\begin{lem} [{Jacobi $\preceq$-identity, \cite[Lemma~10.5]{Row16}}]\label{ideal12}
\begin{enumerate}\eroman
\item
 $ [[ b b']v]+[ b'[bv]] \preceq [ b[b'v]]$ for all
$b,b',v\in L$ for a Lie $\preceq$-semialgebra.

\item $\ad
_b(\ad _{b'}(v)) (-) [ b'[bv]] [ b'[bv]]$ for all $b,b',v\in L$ in a
systemic Lie semialgebra.
\end{enumerate}
\end{lem}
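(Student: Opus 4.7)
The plan is to derive both parts as essentially direct evaluations of the $\ad$-axiom (iii) in the respective definitions of Lie $\preceq$-semialgebra and systemic Lie $\preceq$-semialgebra. Recall that $\ad_x(z)=[xz]$, so everything reduces to applying the axiomatic surpassing relations on endomorphisms to the specific element $v\in L$.

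For part (i), I would take $x=b$ and $y=b'$ in axiom (iii) of Definition~\ref{Lies0}, which gives
$$\ad_{[bb']} + \ad_{b'}\ad_b \preceq \ad_b \ad_{b'}$$
as a relation in the endomorphism system of $L$. Evaluating both sides at $v$ and using the fact that the surpassing relation on $\End L$ defined in Proposition~\ref{Endsys} is precisely pointwise surpassing, I get
$$\ad_{[bb']}(v) + \ad_{b'}(\ad_b(v)) \preceq \ad_b(\ad_{b'}(v)),$$
which unravels to $[[bb']v] + [b'[bv]] \preceq [b[b'v]]$.

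For part (ii), I would do the analogous thing with axiom (iii) of Definition~\ref{Lies}, namely $\ad_{[bb']} \preceq \ad_b\ad_{b'} (-) \ad_{b'}\ad_b$. Evaluating at $v$ yields $[[bb']v] \preceq [b[b'v]] \,(-)\, [b'[bv]]$, which rearranges to the form stated (modulo the apparent typographical glitch in the displayed expression).

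The only subtlety — and really the main thing to verify carefully — is that the surpassing relation on $\overline{\End_{\mathcal A} L}$, defined in Proposition~\ref{Endsys} by $f\preceq g \iff f(y)\preceq g(y)\ \forall y$, is compatible with addition and with the negation-map-induced operations in the sense needed to evaluate the relation on the specific element $v$. This is immediate from the definition but should be noted explicitly. No other obstacle arises; the Jacobi $\preceq$-identity is in effect just a restatement of the defining inequality of $\ad$, specialized to a single vector.
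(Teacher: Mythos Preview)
Your proof is correct and for part (i) is identical to the paper's: rewrite the bracket expressions as $\ad$'s and apply axiom (iii) of Definition~\ref{Lies0} evaluated at $v$.

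For part (ii) there is a small but genuine difference. You invoke axiom (iii) of Definition~\ref{Lies} directly and evaluate at $v$, obtaining $[[bb']v]\preceq \ad_b(\ad_{b'}(v))\,(-)\,[b'[bv]]$ in one step. The paper instead bootstraps from part (i): it first inserts the quasi-zero $[b'[bv]]^{\circ}$ via the surpassing axiom $c\preceq c+d^{\circ}$, writing
\[
[[bb']v]\ \preceq\ [[bb']v]+[b'[bv]]\,(-)\,[b'[bv]],
\]
and then applies part (i) to the sum $[[bb']v]+[b'[bv]]$ to bound it by $\ad_b(\ad_{b'}(v))$, leaving the trailing $(-)[b'[bv]]$. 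Your route is shorter and uses the systemic hypothesis directly; the paper's route shows that (ii) in fact follows from (i) together with only the generic surpassing property of quasi-zeros, so it would work under the weaker Definition~\ref{Lies0} hypotheses as well. Either argument is fine.
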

\begin{proof} (i) $ [[ b b']v]+[ b'[bv]]= \ad _{[ b b']}(v)+ \ad _{b'}(\ad _{b}(v))\preceq \ad _b(\ad _{b'}(v)) =
[ b[b'v]].$

(ii)  $ [[ b b']v] \preceq [[ b b']v] +[ b'[bv]](-)\preceq \ad
_b(\ad _{b'}(v)) (-) [ b'[bv]] [ b'[bv]].$
\end{proof}

Although (i) in
 Lemma~\ref{ideal12} is put in greater generality, (ii) is the $\preceq$-surpassing version of
  Jacobi's identity.

 In
any semiring with a negation map $ (-) $, we write $[b,b']$ for the
\textbf{Lie commutator} $bb' (-) b'b.$

  \begin{prop}[{\cite[Proposition~10.7]{Row16}}]\label{Pois1} Any associative
  semialgebra $A$ with negation map becomes a Lie
  $\preceq_\circ$-semialgebra, denoted $A^{(-)}$,
  under the Lie product $[bb'] = [b,b']$.\end{prop}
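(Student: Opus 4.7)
The plan is to verify the three conditions of Definition~\ref{Lies0} for the bracket $[b,b'] := bb' (-) b'b$ on the associative semialgebra $A$, using only the semiring axioms and the properties \eqref{eq:I.1.110} of the negation map, together with the specific form of $\preceq_\circ$ (namely $u \preceq_\circ u'$ iff $u' = u + c^\circ$ for some $c$). Bilinearity of the bracket is immediate from distributivity in $A$ and from $((-)a)b = (-)(ab) = a((-)b)$.

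First I would verify (i) and (ii), which are almost tautological. For (i),
\[
[xy] + [yx] = \bigl(xy (-) yx\bigr) + \bigl(yx (-) xy\bigr) = (xy + yx)^\circ \succeq_\circ \zero,
\]
and for (ii), $[xx] = xx (-) xx = (xx)^\circ \succeq_\circ \zero$. These use only the definition of quasi-zero and the defining property of $\preceq_\circ$.

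The main work is (iii), the $\preceq$-Jacobi identity, which I would obtain by the standard computation but tracked carefully through the negation map. Applying $\ad_x \ad_y$ to an arbitrary $z \in A$ and expanding via $(-)(uv) = ((-)u)v = u((-)v)$ and distributivity gives
\[
\ad_x \ad_y(z) = x(yz(-)zy)(-)(yz(-)zy)x = xyz (-) xzy (-) yzx + zyx,
\]
and symmetrically $\ad_y \ad_x(z) = yxz (-) yzx (-) xzy + zxy$. On the other hand,
\[
\ad_{[xy]}(z) = (xy(-)yx)z (-) z(xy(-)yx) = xyz (-) yxz + zyx (-) zxy.
\]
Adding $\ad_y\ad_x(z)$ to $\ad_{[xy]}(z)$ and comparing with $\ad_x\ad_y(z)$, the extra terms that appear are precisely $(xzy)^\circ$ and $(yzx)^\circ$; hence
\[
\ad_x\ad_y(z) = \ad_{[xy]}(z) + \ad_y\ad_x(z) + (xzy)^\circ + (yzx)^\circ,
\]
which by the definition of $\preceq_\circ$ yields $\ad_{[xy]} + \ad_y\ad_x \preceq_\circ \ad_x\ad_y$, pointwise on $z$ and hence as endomorphisms.

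The main obstacle is purely bookkeeping: in the classical case the four mixed terms $xzy, yzx, xzy, yzx$ cancel in pairs by additive inverses, but here they merely accumulate as quasi-zeros, so one must resist the urge to cancel $u + (-)u$ to $\zero$ and instead record $u^\circ$ and push it to the right-hand side, interpreting the final inequality through $\preceq_\circ$. Once this is done carefully, no further hypotheses on $A$ are needed, so the construction is completely general.
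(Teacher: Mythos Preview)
The paper itself gives no proof here; it merely cites \cite[Proposition~10.7]{Row16}. Your strategy of directly verifying (i)--(iii) of Definition~\ref{Lies0} is the natural one, and your arguments for (i) and (ii) are correct.

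There is a genuine error in your treatment of (iii). Count monomials: $\ad_x\ad_y(z)$ expands to four terms, while $\ad_{[xy]}(z)+\ad_y\ad_x(z)$ expands to eight. In a semiring without cancellation one cannot have a four-term expression equal to an eight-term expression \emph{plus} further quasi-zeros, so your displayed identity
\[
\ad_x\ad_y(z)=\ad_{[xy]}(z)+\ad_y\ad_x(z)+(xzy)^\circ+(yzx)^\circ
\]
is false as written. Redoing the bookkeeping, the eight terms on the right regroup as
\[
\ad_{[xy]}(z)+\ad_y\ad_x(z)=\ad_x\ad_y(z)+(yxz)^\circ+(zxy)^\circ,
\]
so the inequality one actually obtains is $\ad_x\ad_y\preceq_\circ\ad_{[xy]}+\ad_y\ad_x$, the reverse of Definition~\ref{Lies0}(iii) as printed.

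The quasi-zeros you named, $(xzy)^\circ$ and $(yzx)^\circ$, are exactly the ones appearing in the \emph{other} natural computation:
\[
\ad_x\ad_y(z)\,(-)\,\ad_y\ad_x(z)=\ad_{[xy]}(z)+(xzy)^\circ+(yzx)^\circ,
\]
which is precisely condition (iii) of Definition~\ref{Lies}, i.e., $\ad_{[xy]}\preceq_\circ\ad_x\ad_y(-)\ad_y\ad_x$. It appears you computed this correctly in your head and then ``moved'' $\ad_y\ad_x$ across the equals sign as though subtraction were available---exactly the pitfall your own closing paragraph warns against. The clean repair is to verify Definition~\ref{Lies}(iii) directly (your quasi-zeros are then correct), which is the form actually used downstream; the paper's own lemma linking Definitions~\ref{Lies0} and~\ref{Lies} is itself somewhat loose on the direction of this inequality.
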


Define $\ad L = \{ \ad _b : b\in L\},$ a Lie sub-semialgebra of
$\End _{\Acal} L$ under the Lie product.
  \begin{prop}[{\cite[Proposition~10.6]{Row16}}] If $L$ is a Lie semialgebra, then there is a Lie
 $\preceq$-morphism
  $\ad: L \to \adL$, given by $b \mapsto \ad_b$ (In fact   $\ad$ respects the Lie product and preserves addition).
\end{prop}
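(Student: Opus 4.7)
The plan is to unwind the definitions: writing $\ad_b(z) = [bz]$, each axiom of a $\preceq$-morphism reduces to a short calculation using the $\Acal$-bilinearity of the bracket together with the axioms of a systemic Lie semialgebra. First I would verify that each $\ad_b$ actually lies in $\End_\Acal L$: bilinearity gives $\ad_b(z+z') = [b(z+z')] = [bz]+[bz']$ and $\ad_b(az) = a\ad_b(z)$, so $\ad_b$ is an $\Acal$-module homomorphism, and provided $[\tT_L,\tT_L]\subseteq \tT_L$ it lies in $\tT_{\End L}$, placing it inside the endomorphism system of Proposition~\ref{Endsys}.

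Next I would check the five defining properties of a $\preceq$-morphism for $\ad: L \to \adL$. Properties (i) and (iv) (sending $\zero$ to $\zero$ and commuting with $\Acal$-scalars) are immediate from bilinearity. Property (ii), the identity $\ad_{(-)b}(z) = [((-)b)z] = (-)[bz] = ((-)\ad_b)(z)$, follows from bilinearity combined with the compatibility~\eqref{eq:I.1.110} between negation and scalar action. For (iii), bilinearity actually yields the equality $\ad_{b+b'} = \ad_b + \ad_{b'}$ --- this is the promised ``preserves addition'', and is stronger than the generic $\preceq$ demanded for a morphism. Property (v), that $b\preceq b'$ implies $\ad_b \preceq \ad_{b'}$ pointwise, follows from Definition~\ref{Lies2} since $[bz]\preceq [b'z]$ for every $z$.

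The key step --- and the only non-routine one --- is showing that $\ad$ respects the Lie product in the surpassing sense $\ad_{[bb']} \preceq [\ad_b,\ad_{b'}]$, where the right-hand bracket is the commutator in the Lie semialgebra $(\End_\Acal L)^{(-)}$ afforded by Proposition~\ref{Pois1}. But this is precisely axiom (iii) of Definition~\ref{Lies}, which reads $\ad_{[bb']} \preceq \ad_b\ad_{b'}(-)\ad_{b'}\ad_b$, and the right-hand side is by definition $[\ad_b,\ad_{b'}]$ in the sense of Proposition~\ref{Pois1}. The main obstacle --- and the very reason one obtains a $\preceq$-morphism rather than a strict Lie homomorphism --- is this built-in slack: in the systemic setting the Jacobi-type identity only holds up to $\preceq$ (Lemma~\ref{ideal12}), so equality cannot be expected on the Lie-bracket level.

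Finally, to conclude that $\adL$ is indeed a Lie sub-semialgebra of $(\End_\Acal L)^{(-)}$, I would note that sums, scalar multiples and negatives of elements $\ad_b$ are themselves of the form $\ad_{b'}$ by the properties verified above (preservation of addition, of $\Acal$-action, and of negation), and that the Lie commutator of two adjoints surpasses an adjoint by the Lie-bracket compatibility. This closure under all systemic Lie operations, together with items (i)--(v), gives that $\ad$ is a $\preceq$-morphism of Lie semialgebras in the sense of Definitions~\ref{Lies} and \ref{Lies2}.
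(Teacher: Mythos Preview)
The paper does not supply its own proof of this proposition; it simply cites \cite[Proposition~10.6]{Row16}. Your argument is correct and is the natural unwinding of the definitions: bilinearity of the bracket yields preservation of addition and scalars, compatibility with $(-)$ follows from bilinearity together with \eqref{eq:I.1.110}, and the crucial point---that $\ad$ respects the Lie product up to $\preceq$---is literally axiom~(iii) of Definition~\ref{Lies}. One small remark: your aside ``provided $[\tT_L,\tT_L]\subseteq \tT_L$'' is unnecessary, since the target of $\ad$ is by definition $\adL=\{\ad_b:b\in L\}$ rather than $\tT_{\End L}$, so no tangibility hypothesis on brackets is required for the statement as given.
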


 $  \mathcal V
_n $ is the free module, as in Definition~\ref{modu13}, over
$\mathcal A$ generated by $\bfx^n:=\{ x^j :0\leq j <n\}$. Let \be
S(\mathcal V )={\mathcal A}[x^i : 0\leq i < n] \ee be the symmetric
(i.e., commutative polynomial) semialgebra of $\mathcal V $, \be
\BSV:={S(\mathcal V )\over ({x^i}x^i\sim 0)} ,\ee the base
square-free symmetric semialgebra (which is the Giansirancusa
tropical Grassmannian of \cite{GG}). There is an obvious
$\tT$-semialgebra epimorphism $S(\mathcal V )\lra \BSV$ mapping to
zero all the words in $\bfx$ involving a repetition of a letter.


%
%

%
(This suggests that we could define a more general Weyl semialgebra
in $\End_{\mathcal A}  S(\mathcal V )$ associated to a bilinear
form, cf.~Definition~\ref{bilf}.)


\claim{\bf The tensor system and extended tensor system.} Tensor
products are defined naturally in the semialgebra context,
cf.~\cite{Ka2}. Putting $\V ^{\otimes 0}:=\Acal$, $ \V ^{\otimes
1}:=\V $, and
  $\V ^{\otimes n}:=\V ^{\otimes n-1}\otimes_{\Acal} \V $,
we let
$
  T^k(\V )=    \V  ^{\otimes k},\quad T^{\geq 2}(\V )=\bigoplus_{n\geq 2}\V ^{\otimes n}$. The direct sum
  $$
   T(\V )=\bigoplus_{n\geq 0}\V ^{\otimes n}.
  $$
is called the \textbf{tensor semialgebra}, an important structure
from which we extract a natural negation map.

Motivated by the wish
of getting a working notion of  exterior semialgebra, we define a
negation map on $T^{\geq 2}(\V )$ by $$(-) v_1\otimes v_2 \otimes
\dots =  v_2\otimes v_1 \otimes \dots .$$ Define $\tT^{\geq 2}$ to
be the multiplicative monoid of simple
 tensors at length at least two of basis elements of $\V $,
 i.e., $x^{i_1}\otimes x^{i_2} \otimes \dots$.

\begin{lemma}\label{neg0} As an ${\mathcal A}$-module, $(T^{\geq
2}(\V ),\tT^{\geq 2},(-),\preceq_\circ)$ is a system.
\end{lemma}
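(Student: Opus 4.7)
The plan is to verify the axioms of a system (Definition~\ref{sursys0}) for the quadruple $(T^{\geq 2}(\V), \tT^{\geq 2}, (-), \preceq_\circ)$ by systematically using the free $\Acal$-module basis of $T^{\geq 2}(\V)$ consisting of simple basis tensors $x^{i_1} \otimes x^{i_2} \otimes \cdots$. The map $v_1 \otimes v_2 \otimes \cdots \mapsto v_2 \otimes v_1 \otimes \cdots$ is $\Acal$-linear (by the universal property of $\otimes$) and of order $2$, hence an injective negation map, and it permutes basis tensors by swapping the first two indices: a basis tensor $b$ is either paired with a distinct $(-)b$ or fixed (when $i_1 = i_2$).

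The key structural observation is that any quasi-zero $c^\circ$ is $(-)$-invariant, and unfolding $c = \sum_b \alpha_b b$ gives $c^\circ = \sum_b (\alpha_b + \alpha_{(-)b}) b$, so its basis coefficients are equal on each $(-)$-orbit. This immediately gives $\tT^{\geq 2} \cap T^{\geq 2}(\V)^\circ = \emptyset$ for the base semirings under consideration (which are zero-sum-free): a single basis tensor with coefficient $\one$ cannot arise as such a symmetrized sum. The other triple axioms, namely closure of $\tT^{\geq 2}$ under $(-)$ and additive generation of $T^{\geq 2}(\V)$ by $\tT^{\geq 2} \cup \{\zero\}$, are immediate from the basis description.

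Since $\preceq_\circ$ is, by construction, a surpassing relation on any triple (Conditions~\ref{condi1}(iii),(iv) hold automatically), I only need to verify axioms (v) and (vi) of Definition~\ref{sursys0}. For (v), suppose $a \preceq_\circ a'$ with $a, a' \in \tT^{\geq 2}$, so $a' = a + c^\circ$. Comparing basis expansions on both sides, and using that $c^\circ$ contributes the same coefficient to each pair $\{b, (-)b\}$, I would conclude by coefficient chasing (invoking zero-sum-freeness of $\Acal$ to rule out coefficient cancellations) that $a = a'$. For (vi), suppose $a + a' = c^\circ$; then the $(-)$-symmetry of the right side forces $a + a' = (-)a + (-)a'$, so by linear independence of basis tensors $\{a, a'\} = \{(-)a, (-)a'\}$ as multisets, leaving the desired conclusion $a' = (-)a$.

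The main technical obstacle I anticipate lies in (vi), namely excluding the degenerate possibility that both $a$ and $a'$ are fixed by $(-)$ while being unequal to one another; this is ruled out by the zero-sum-freeness of $\Acal$, which prevents coefficient identities such as $2\alpha = \one$ from being solved within $\tT$. Once these semiring-theoretic subtleties are handled, the rest of the verification reduces to routine bookkeeping on the free basis of simple tensors.
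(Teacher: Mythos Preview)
Your approach is the same as the paper's: both rest on the fact that $\tT^{\geq 2}$ is a free $\Acal$-basis for $T^{\geq 2}(\V)$ and that any quasi-zero has basis coefficients constant on each $(-)$-orbit. You have usefully made explicit the delicate case the paper's one-line proof passes over, namely the basis tensors fixed by the switch map (those with $i_1 = i_2$).

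However, your resolution of that case is not correct. Zero-sum-freeness of $\Acal$ (the condition $a+b=\zero \Rightarrow a=b=\zero$) does \emph{not} preclude the equation $2\alpha = \one$ from having a solution: $\Acal = \QQ_{+}$ is zero-sum-free and yet $\alpha = \tfrac12$ works. Over such $\Acal$, any $(-)$-fixed basis tensor $b = x^i \otimes x^i \otimes \cdots$ satisfies $b = (\tfrac12\, b)^\circ \in T^{\geq 2}(\V)^\circ$, so axiom~(i) of a triple already fails; and for two distinct $(-)$-fixed basis tensors $a \ne a'$ one has $a + a' = \big(\tfrac12(a+a')\big)^\circ$, so $a + a' \succeq \zero$ while $a' \ne (-)a = a$, and axiom~(vi) fails as well. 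What is actually needed is that $\one$ not be $2$-divisible in $\Acal$ (as for $\NN$), or else that the $(-)$-fixed simple tensors be excluded from $\tT^{\geq 2}$ (as effectively happens after passing to the exterior quotient where $x^i\otimes x^i \sim \zero$). The paper's terse proof does not address this point either, so you have at least isolated the real obstacle; but the justification you offer does not dispose of it.
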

\proof In fact $\tT ^{\geq 2}$ is a base of $T^{\geq 2}(\V )$, where
$T^{\geq 2}(\V )^\circ$ is spanned by $( \tT^{\geq 2})^\circ,$ so
unique negation is clear.\qed

This has the flavor of exterior algebras, so we call it the
\textbf{pre-exterior system}, which is a useful tool in
representation theory. But this only works as a module, not as a
semialgebra, since
$$(-)(v_1\otimes v_2)(v_3\otimes v_4) = (v_2\otimes v_1)\otimes
(v_3\otimes v_4),$$ whereas $$(v_1\otimes v_2)((-)(v_3\otimes v_4))
= (v_1\otimes v_2)\otimes (v_4\otimes v_3).$$

To make $(-)$ work for semialgebras we define  \begin{equation}
 \label{ext} \overline{ T}^k(\V )=  T^k(\V )/\mathfrak R,\end{equation}
 where $\mathfrak R$ is the congruence equating $v_1  \otimes \dots \otimes
 v_k$ with  $v_{\pi(1)}  \otimes \dots \otimes
 v_{\pi(k)}$ for every even permutation $\pi\in\Sym_k.$

\begin{lemma}\label{neg3} As an ${\mathcal A}$-semialgebra, $(\bar T^{\geq
2}(\V ),\bar\tT^{\geq 2},(-),\preceq_\circ)$ is a system.
\end{lemma}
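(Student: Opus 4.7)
The plan is to check the axioms of a system for the quotient, focusing on what changes from Lemma \ref{neg0}. First, I would show that the swap-negation $(-)$ on $T^{\geq 2}(\V)$, which transposes the first two tensor factors, descends to a well-defined $\Acal$-linear map on $\bar T^{\geq 2}(\V)$. Writing $\tau = (1\;2)$, for an even permutation $\pi$ the conjugate $\tau\pi\tau^{-1}$ is again even, so from $u \equiv \pi\cdot u \pmod{\mathfrak R}$ one deduces $\tau\cdot u \equiv \tau\pi\cdot u = (\tau\pi\tau^{-1})(\tau\cdot u) \equiv \tau\cdot u \pmod{\mathfrak R}$. The order-two property and additivity of $(-)$ descend automatically, as does the $\Acal$-semialgebra structure on $\bar T^{\geq 2}(\V)$ since the concatenation of two even permutations is again even.

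Second, the heart of the matter is the semialgebra compatibility of $(-)$:
$$
((-)u)\cdot w \;\equiv\; (-)(u\cdot w) \;\equiv\; u\cdot((-)w) \pmod{\mathfrak R},
$$
for simple tensors $u = v_1\otimes\cdots\otimes v_k$ and $w = w_1\otimes\cdots\otimes w_\ell$ with $k,\ell\geq 2$. The first equivalence is an equality already in $T^{\geq 2}(\V)$, since the swap only touches the first two factors of the concatenation. For the second, $(-)(u\cdot w)$ applies the transposition $(1\;2)$ to the $k+\ell$ factors of $u\cdot w$, whereas $u\cdot((-)w)$ applies $(k+1\;k+2)$; the two differ by the product $(1\;2)(k+1\;k+2)$, a product of two transpositions and hence an even permutation. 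So the two expressions coincide in $\bar T^{\geq 2}(\V)$. This parity check is precisely the reason for passing to the quotient \eqref{ext}, which resolves the obstruction explicitly exhibited in the discussion preceding Lemma \ref{neg0}.

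Third, the remaining system axioms follow in close analogy with Lemma \ref{neg0}: the image $\bar\tT^{\geq 2}$ of $\tT^{\geq 2}$ is closed under $(-)$, and the $\mathfrak R$-orbits of simple tensors furnish an $\Acal$-spanning set, so condition (ii) of the triple holds. Quasi-zeros $w + (-)w$ are intrinsically $(-)$-symmetric, so $\bar T^{\geq 2}(\V)^\circ$ is spanned by symmetric combinations of basis orbits and cannot collapse to a single tangible orbit, giving condition (i) (unique negation) by the same spanning/basis argument as in the pre-exterior system. The axioms for the surpassing relation $\preceq_\circ$ then hold by the general construction in Lemma \ref{makesur}(ii).

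The main obstacle is step two: the parity computation is precisely what the congruence $\mathfrak R$ was designed to enable, and without it the whole semialgebra compatibility collapses. The calculation is short but indispensable, and it is the single new ingredient beyond Lemma \ref{neg0}. A secondary point requiring care in step three is the bookkeeping of orbits fixed by the swap (arising when $v_1 = v_2$) versus those where the swap genuinely interchanges two distinct orbits, but this mirrors the corresponding argument for the pre-exterior system and introduces no new difficulty.
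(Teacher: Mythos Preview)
Your proposal is correct and follows essentially the same approach as the paper: the paper's proof consists solely of the parity computation you carry out in step two, observing that $((-)u)\cdot w$ and $u\cdot((-)w)$ differ by a product of two transpositions and hence are identified modulo $\mathfrak R$. Your steps one and three flesh out details the paper leaves implicit, but the core argument is identical.
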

\proof We identify $((-)v_1  \otimes \dots \otimes
 v_k)\otimes w_1  \otimes \dots \otimes
 w_\ell = v_2\otimes v_1  \otimes \dots \otimes
 v_k\otimes w_1  \otimes \dots \otimes
 w_\ell $ with $(v_1  \otimes \dots \otimes
 v_k)\otimes (w_2\otimes w_1  \otimes \dots \otimes
 w_\ell) = (v_1  \otimes \dots \otimes
 v_k)\otimes ((-)w_1 \otimes w_2 \otimes \dots \otimes
 w_\ell)$ since the subscripts differ by a product of two
 transpositions, which is an even permutation.\qed

 As seen in \cite{GaR}, the system of
Lemma~\ref{neg3} suffices for many applications, but we would like
to extend this to a system that also contains the degree 1 part. Thus
our pre-exterior system must be enlarged, using the regular
representation.

Consider the  maps:

$$
\id, (-):\Acal \sra \End_\Acal(\bar T^{\geq 2}(\V ))
$$
and
$$
l,r:\V \sra \End_\Acal(\bar T^{\geq 2}(\V ))
$$
 defined by
$$
\id(a)(v_1\otimes v)=a v_1\otimes v\qquad \mathrm{and} \qquad (-)a
(v_1\otimes v)= (-)av_1\otimes v,
$$
$$
l(u)(v_1\otimes v)=u\otimes v_1\otimes v\qquad \mathrm{and} \qquad
r(u) (v_1\otimes v)=v_1\otimes u\otimes v,
$$ for all $(u,v_1\otimes v)\in \V \times \bw(\V )$ such
that $v_1\in\V $.

We shall identify $\V $ with $Im(l)$, and also set $(-)\V :=Im(r)$,
which becomes an $\Acal$--module when one defines $a((-)v)=(-)av\in
(-)V$ for all $(a,u)\in\Acal\times \V $. We write $(-)u$ for $r(u)$,
and $(-)((-)u)=u$.
\begin{definition} \label{defneg}
 The   \textbf{degree zero component} for $\widetilde{ \overline{ T}}(\V )$   is
the $\Acal$-module  $\widetilde {\overline{ T}}^0(\V
):=Im(\id)\bigoplus Im((-)). $

 The   \textbf{degree one component} for
$\widetilde{ \overline{ T}}(\V )$   is the $\Acal$-module
$\widetilde {\overline{ T}}^1(\V ):=Im(l)\bigoplus Im(r). $

 $\widetilde{\overline{ T}}^{\ge 2}(\V ): =   {\overline{ T}^{\ge 2}}(\V
).$
\end{definition}
There is an obvious  $\Acal$--epimorphism $\rho:\widetilde
{\overline{ T}}^1(\V )\sra \V $, given by $l(u)\mapsto u$ and
$r(u)\mapsto u$, so that $\rho^{-1}(u)=\{l(u),r(u)\}$.
\begin{definition}\label{def:defnegmap}
Define $\gl(\widetilde {\overline{ T}}^1(\V ))$ as the set of all
$f\in \End_\Acal(\bar T(\V ))$ such that
$$
 \gl(\widetilde {\overline{T}}^1(\V ))=\left\{\,f\in
\End_\Acal(\bar T(\V
))\,\,\left|\,\,\,\begin{matrix}((-)f)(l(u))&=&r(f(u))\cr\cr
((-)f)(r(u))&=&l(f(u))\end{matrix}\right.\,\,\,\right\}
$$
i.e., the set of all $\Acal$-linear maps  $f$ such that $(-)f$
switches $r$ and $l$.
\end{definition}

\begin{proposition}
If $f,g\in \gl(\widetilde {\overline{ T}}^1(\V ))$, then
$$
(f\circ (-)g)(u)=(-)f(g(u))
$$
i.e.
$$
(f\circ (-)g)(u)\w v=v\w f(g(u))\qquad v\in\V
$$

\end{proposition}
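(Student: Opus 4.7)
The plan is to unwrap both sides for $u \in \V$ (which we identify with $l(u) \in \widetilde{\overline{T}}^1(\V)$ via the embedding $\V \cong \mathrm{Im}(l)$) and show each equals $r(f(g(u)))$.

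First I would handle the left-hand side. By the defining condition on $g$ applied to $l(u)$,
$$
((-)g)(u) \;=\; ((-)g)(l(u)) \;=\; r(g(u)).
$$
(Implicit in this condition is that $g$ restricts to a degree-preserving map on degree one, so $g(u)\in\V$.) Next, to apply $f$, I would unwind the second defining condition $((-)f)(r(w)) = l(f(w))$, i.e. $(-)(f(r(w))) = l(f(w))$, which gives
$$
f(r(w)) \;=\; (-)l(f(w)) \;=\; r(f(w)),
$$
where the last equality uses the elementary identity $(-)l = r$ as endomorphisms of $\bar T^{\geq 2}(\V)$, an immediate consequence of $(-)(w\otimes v_1 \otimes v) = v_1 \otimes w \otimes v$. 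Taking $w=g(u)$ yields
$$
(f\circ (-)g)(u) \;=\; f(r(g(u))) \;=\; r(f(g(u))).
$$

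Second, for the right-hand side, I would parse $(-)f(g(u))$ as $((-)f)(g(u)) = ((-)f)(l(g(u)))$ which, by the first defining condition on $f$, equals $r(f(g(u)))$. (Alternatively, reading it as $(-)(f(g(u)))$ and invoking $(-)l = r$ once more gives the same value.) Hence both sides collapse to $r(f(g(u)))$. For the wedge-form reformulation I would simply evaluate this endomorphism on $v\in\V$: by definition of $r$,
$$
(f\circ (-)g)(u)\wedge v \;=\; r(f(g(u)))(v) \;=\; v \otimes f(g(u)) \;=\; v\wedge f(g(u)).
$$

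The main obstacle is purely notational rather than computational: the symbol $(-)$ carries three simultaneous roles here — the tensor-factor swap on $\bar T^{\geq 2}(\V)$, the interchange of the summands $\mathrm{Im}(l)$ and $\mathrm{Im}(r)$ inside $\widetilde{\overline{T}}^1(\V)$, and the induced involution $((-)f)(x):=(-)(f(x))$ on $\End_\Acal(\bar T(\V))$. The whole content of the proposition is the mutual compatibility of these three negations on the degree one component, and the proof is just a matter of systematically exploiting the two conditions defining $\gl(\widetilde{\overline{T}}^1(\V))$ to push $(-)$ past $f$ and $g$ while tracking how $l$ and $r$ are exchanged.
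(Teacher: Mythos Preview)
Your proof is correct and follows essentially the same approach as the paper's, which is a one-line computation: identifying $u$ with $l(u)$, the paper writes $f((-)g)(u)=f((-)g(u))=(-)f(g(u))$, implicitly using that membership in $\gl(\widetilde{\overline{T}}^1(\V))$ forces $f((-)w)=(-)f(w)$. Your version simply makes that implicit step explicit by unpacking both defining conditions in terms of $l$ and $r$ and using $(-)l=r$; the content and route are the same.
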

\proof Let $u:=l(u)\in \tilde T_1(\V )$. Then
$$
f(-g)(u)=f((-)g(u))=(-)f(g(u)).
$$
\qed

The    map  $ (-)v:\V \lra \End_\Acal\overline{ T}(\V )$ is given by
$$
(-)v=\underline{\phantom{w}}\otimes v: \overline{ T}(\V )\sra
\overline{ T}(\V )
$$
 mapping $u_1\otimes u$ (for $u_1\in \V $ and $u\in  \overline{ T}(\V )$)
 to
$$
((-)v)(u_1\otimes u)=(u_1\otimes v)\otimes u.
$$

We extend this to all degrees $\ge 1$.
\begin{remark} The regular representation $\Psi_{\overline{ T}^{\ge 1}(\V )}:\overline{ T}(\V )\to  \End_\Acal\overline{ T}(\V )$
(see Definition~\ref{regrep})  sends $v_1 \otimes \dots \otimes v_m$
to the map $$v_1' \otimes \dots \otimes v_k' \mapsto v_1 \otimes
\dots \otimes v_m \otimes v_1' \otimes \dots \otimes v_k'.$$
\end{remark}

\begin{lemma}\label{neg1}
 The regular representation $\Psi_{\overline{ T}^{\ge 1}(\V )}$ is a
homogeneous $\Acal$-injection of semialgebras, of degree $1$, i.e.,
$v\otimes \underline{\phantom{w}}\,:\overline{ T}^{i}(\V )\sra
\overline{ T}^{i+1}(\V )$. Furthermore, the negation map on
$\overline{ T}^{\geq 2}(\V )$ induces a  negation map on
$\Psi_{\overline{ T}(\V )}( \overline{ T}(\V ))$ by
$$(-)\Psi_{\overline{ T}(\V )}(v_1 \otimes v_2 \otimes \dots \otimes v_m)( v_1' \otimes \dots \otimes
v_k') =  v_1 \otimes v_2 \otimes \dots \otimes v_m \otimes v_2'
\otimes v_1'\otimes\dots \otimes v_k',$$ which can be combined with
our previous negation map on $\widetilde {\overline{ T}}^1(\V ).$
\end{lemma}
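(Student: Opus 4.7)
The regular representation $\Psi := \Psi_{\overline{T}^{\geq 1}(\V)}$ sends $w \in \overline{T}(\V)$ to the endomorphism $v' \mapsto w \otimes v'$ of $\overline{T}(\V)$. That $\Psi$ is an $\Acal$-linear homomorphism of semialgebras follows directly from bilinearity of $\otimes$ (for $\Acal$-linearity) and associativity of $\otimes$ (for the relation $\Psi(w \otimes w') = \Psi(w) \circ \Psi(w')$). Homogeneity of degree $1$ is the observation that, for $v \in \V = \overline{T}^1(\V)$, the operator $v \otimes \underline{\phantom{w}}$ carries $\overline{T}^i(\V)$ into $\overline{T}^{i+1}(\V)$, which is immediate by construction. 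Injectivity follows because $\Psi(w)$ evaluated at $1 \in \Acal = \overline{T}^0(\V)$ returns $w$ itself, so $\Psi(w) = \Psi(w')$ forces $w = w'$.

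For the induced negation, the natural definition is $(-)\Psi(w) := \Psi((-)w)$ for $w \in \overline{T}^{\geq 2}(\V)$. This is automatically additive, $\Acal$-linear, and of order two, inheriting these properties from the prior negation on $\overline{T}^{\geq 2}(\V)$. The content of the displayed formula is then the claim that this $(-)\Psi(w)$, applied to $v'_1 \otimes \cdots \otimes v'_k$, equals $w \otimes (v'_2 \otimes v'_1 \otimes v'_3 \otimes \cdots \otimes v'_k)$, i.e.\ that swapping the first two factors \emph{inside $w$} and then concatenating with $v'$ yields the same element as concatenating and then swapping the first two factors \emph{of the argument}.

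This is the crux of the argument. Writing $w = v_1 \otimes \cdots \otimes v_m$, the two pure tensors
\[
(v_2 \otimes v_1 \otimes v_3 \otimes \cdots \otimes v_m) \otimes (v'_1 \otimes \cdots \otimes v'_k)
\]
and
\[
(v_1 \otimes v_2 \otimes \cdots \otimes v_m) \otimes (v'_2 \otimes v'_1 \otimes v'_3 \otimes \cdots \otimes v'_k)
\]
in $T^{m+k}(\V)$ differ by the product of the two disjoint transpositions $(1\,2)$ and $(m{+}1\;\; m{+}2)$ in $\Sym_{m+k}$. Since this product is an even permutation, the congruence $\mathfrak R$ from \eqref{ext} identifies the two tensors in $\overline{T}^{m+k}(\V)$. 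This is precisely the reason we quotiented by even permutations in defining $\overline{T}(\V)$.

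To combine this with the degree-one negation of Definition~\ref{defneg}, I would observe that on $\widetilde{\overline{T}}(\V)$ the operation ``swap the first two tensor factors'' is already globally defined via the operators $l, r$, so the two descriptions (right-action of this swap versus the defining swap inside $w$) coalesce into a single consistent negation map on $\Psi_{\overline{T}(\V)}(\overline{T}(\V))$ across all degrees. The main obstacle is purely bookkeeping for the low-degree corners $m \leq 1$ or $k \leq 1$, where one must invoke the $l, r$ framework to make the swap meaningful; everything substantive rests on the single observation that a product of two disjoint transpositions is even.
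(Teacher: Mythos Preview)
Your proof is correct and follows the same approach as the paper. The paper's proof is considerably terser: it simply asserts that ``the analog of the usual regular representation also holds for semialgebras'' and that ``the map switching the order of the tensors serves as the negation map throughout,'' then spells out the degree~$0$ and degree~$1$ extensions and checks the compatibility $(-)v\wedge w = v\wedge(-)w$. Your explicit verification of the displayed formula via the observation that $(1\,2)(m{+}1\ m{+}2)$ is an even permutation is precisely the mechanism behind Lemma~\ref{neg3}, which the paper's proof implicitly invokes; so you have unpacked what the paper leaves as a cross-reference.
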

\proof The analog of the usual regular representation also holds for
semialgebras. The  map switching the order of the tensors serves as
the negation map throughout.

More precisely, we  extend the negation map defined on $\overline{
T}^{\geq 2}(\V )$ to a negation map
$$
(-):\widetilde{\overline{ T}}(\V )\sra \widetilde{\overline{ T}}(\V
),
$$
by defining $(-)a$ to be the element of $\widetilde{\overline{
T}}^{0}$ such that
$$
((-)a) v_1\w\cdots\w v_k=a v_2\w v_1\w \cdots\w v_k,
$$
and defining $(-)v$ to be the element of $\widetilde{\overline{
T}}^{1}$ such that
$$
((-)v)\w v_1\w\cdots\w v_k=v_1\w v\w v_2\w\cdots\w v_k.
$$

Notice that
$$
(-)v\w w=v\w (-)w,
$$
so the negation of $\widetilde{\overline{ T}}^1(V)$ is compatible
with that of $\overline{ T}^{\geq 2}(\V )$.\qed

The upshot of this is:

\begin{proposition}\label{neg70}  The data $(\widetilde{ \overline{ T}}(\V ), \widetilde{\overline{\tT}}(\V ),(-),\preceq_\circ)$ is a system.
\end{proposition}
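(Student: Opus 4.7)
The plan is to exploit the grading
\[
\widetilde{\overline{T}}(\V) \;=\; \widetilde{\overline{T}}^0(\V) \oplus \widetilde{\overline{T}}^1(\V) \oplus \overline{T}^{\geq 2}(\V),
\]
and verify the system axioms of Definition~\ref{sursys0} degree by degree. First, observe that by construction the negation map preserves the grading (it swaps within each homogeneous component), so each graded piece has its own triple structure, and tangibles, quasi-zeros, and the surpassing relation $\preceq_\circ$ all decompose along this grading. The component in degree $\geq 2$ is exactly the system of Lemma~\ref{neg3}, and so requires no further work.

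For the degrees $0$ and $1$ I would treat them uniformly via the following abstract observation: each is of the form $M \oplus (-)M$ with $M$ a free $\Acal$-module on a tangible base $\tT_M$ (namely $\mathrm{Im}(\id)$ with base $\tT_\Acal$ in degree $0$, respectively $\mathrm{Im}(l)$ with base $\{l(u):u\in\bfx\}$ in degree $1$), and the negation map simply interchanges the two summands. The tangible set is $\widetilde{\overline{\tT}}(\V)\cap (M\oplus(-)M) = \tT_M\,\cup\,(-)\tT_M$. Given $c = m_1 + (-)m_2$, a direct computation gives
\[
c^\circ = c + (-)c = (m_1+m_2) \;+\; (-)(m_1+m_2),
\]
so the quasi-zeros are precisely the elements of $M\oplus (-)M$ whose $M$-component equals its $(-)M$-component (after identifying the two summands via $(-)$). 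Since any tangible lives in exactly one of the two summands and is nonzero there, it cannot be a quasi-zero; this yields $\widetilde{\overline{\tT}}(\V)\cap \widetilde{\overline{T}}(\V)^\circ = \emptyset$. Additive generation is clear from the defining direct sum, and the order~$2$ injectivity of $(-)$ is immediate from $(-)((-)u)=u$ recorded after Definition~\ref{defneg}. Conditions~\ref{condi} and~\ref{condi1}(iii)--(iv) for $\preceq_\circ$ are formal consequences of its definition as the smallest relation containing $b \preceq b + c^\circ$, as noted in Lemma~\ref{makesur}(ii).

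The main point requiring care is the unique negation axiom (vi). Suppose $a + a' \succeq_\circ \zero$ with $a, a'$ tangible in the same degree. Writing $a + a' = c^\circ$ means $a+a'$ has equal $M$- and $(-)M$-components; since $a, a'$ each lie entirely in $M$ or in $(-)M$, the only way for the two components to match is to have one of $a, a'$ in $M$ and the other in $(-)M$, with their base representatives equal. That is precisely the assertion $a' = (-)a$. Axiom (v) is immediate since on each base of tangibles the relation $\preceq_\circ$ collapses to equality, there being no nontrivial quasi-zero in a single summand $M$ or $(-)M$.

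The only mild subtlety, and what I would consider the main obstacle to a fully rigorous write-up, is checking that the degree~$0$ and degree~$1$ components interact correctly with the surpassing relation on the degree~$\geq 2$ component when forming the global direct sum; but since the negation map, the tangible set, and $\preceq_\circ$ all split as direct sums across the grading, the verification in each graded piece assembles into a verification for $\widetilde{\overline{T}}(\V)$ as a whole. Combining the three pieces gives the proposition.\qed
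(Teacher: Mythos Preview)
Your proof is correct and is essentially the natural elaboration of the paper's argument. The paper itself gives no explicit proof of this proposition: it simply writes ``The upshot of this is:'' after Lemma~\ref{neg1}, treating the statement as an immediate consequence of the preceding constructions (the negation map on $\overline{T}^{\geq 2}(\V)$ from Lemma~\ref{neg3}, together with its extension to degrees $0$ and $1$ via Definition~\ref{defneg} and Lemma~\ref{neg1}). Your degree-by-degree verification, using the $M\oplus(-)M$ structure in low degrees and invoking Lemma~\ref{neg3} for degree~$\geq 2$, is exactly the argument one would supply to justify that sentence, and your handling of axioms (v) and (vi) via the explicit description of quasi-zeros in $M\oplus(-)M$ is the right mechanism.
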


\begin{remark} $\widetilde{\overline{ T}}^0(V)$ is just the direct sum of two copies of $\Acal.$ In our
applications, it is superfluous, since we take tensors of length
$\ge 1$.
\end{remark}

\section{Exterior systems}\label{sec:sec1}

 In this section we refine the construction of the exterior
semialgebra in \cite[Definition~3.1]{GaR}, first to obtain a system,
but also because we want later to construct a Clifford semialgebra
as its semialgebra of endomorphisms. We start with  two
constructions from \cite{GaR}.

\claim{\bf Exterior semialgebras of types $1$ and $2$.} We define

$$\bw^k \V  := \displaystyle{\widetilde{\overline{ T}}^k(\V )\over (x^i\otimes x^i\sim
  0,\, i\in I)}; \qquad \bigwedge^{\ge 2}\V  = \oplus _{k\ge 2} \bw^k \V ;  \qquad \bw\V  := \oplus _{k\ge 0} \bw^k \V .$$
\begin{defn}
The exterior semialgebra of \textbf{{\bf type 1}} is $ \bw
(\Acal[x]), $ considered with respect to the juxtaposition product
on the $\bw^k\V $.
\end{defn}

\begin{defn}
The exterior semialgebra of \textbf{{\bf type 2}} is:
$$
 {\overline{ \bw\V }}:{\widetilde{\overline{ T}}(\V )\over
{(v^{\otimes 2}\sim 0, v \in \Acal[x])}}.
$$
\end{defn}

\begin{remark}
The switch map on $\overline{ T}^ {\ge 2}(\V )$ induces  a negation
map  $(-)$ on $\overline{ T}^{\ge 2}(\V )$ given by
 the formula
 \begin{equation} \label{sw1} (-) v_{1}\otimes v_{2}\otimes\ldots
 \otimes
v_{k} = v_{2}\otimes v_{1}\otimes\ldots \otimes v_{k},
\end{equation}
which combined with the negation maps on $\widetilde{\overline{
T}}^0(\V )$ and $\widetilde{\overline{ T}}^1(\V )$ provides a
negation map on $\widetilde{\overline{ T}}(\V )$.

Together with \eqref{ext} we have
 \begin{equation} \label{sw1}
 v_{\pi(1)}\otimes\cdots\otimes v_{\pi(k)}= (-)^\pi v_{1}\otimes
 v_{2}\otimes\ldots \otimes
v_{k},
\end{equation}
\end{remark}

\begin{remark} In type 1 we are not requiring the elements
of $\V $ to be square 0; for instance $(x^i+x^j)^{\otimes
2}=x^i\otimes x^j + x^j\otimes x^i$. The ``exterior'' structure is
built in via the switch map.
\end{remark}

In both types we denote the product by $\w$, abusing notation. In
particular
$$
(-)(u\w v)=((-)u)\otimes v=u\w (-)v=v\w u,\qquad \forall (u,v)\in \V
^2.$$

\begin{remark}\label{exth0}
  $( \bw^{\geq 2}\V , \tT^{\ge 2}(\V ), (-), \preceq_\circ)$
is a system. It differs from the system  of   Lemma~\ref{neg3} only
insofar as we have modded out the $v\otimes v$, but the notation
remains the same.\end{remark}

\claim{\bf Construction of the extended exterior system.}  The
system of Remark~\ref{exth0} suffices for the applications in
 \cite{GaR}, but we would like to have a system that also includes
the degree 1 part of the exterior semialgebra. To attain this, our
exterior system will be slightly bigger than the exterior
$\Acal$-algebra. We repeat the argument used in constructing $\tilde
T,$ modding out $(x^i\otimes x^i\sim
  0,\, i\in I)$ for type 1, and   modding out $(v\otimes v\sim
  0,\, v \in V)$ for type 2.

\begin{lemma}\label{neg17}
 The regular representation $\Psi_{\bw^{\ge 1}\V }$ is a
homogeneous $\Acal$-injection of semialgebras of degree $1$, i.e.,
$v\otimes \underline{\phantom{w}}\,:\bigwedge^{i}\V \sra
\bigwedge^{i+1}\V $. Furthermore, as in Lemma~\ref{reg2}, the
negation map on $\bw^{\geq 2}(\V )$ induces a   negation map on
$\Psi_{\bw \V }( \bw^{\geq 2}\V )$ by
$$(-)\Psi (v_1 \otimes v_2 \otimes \dots \otimes v_m)( v_1' \otimes \dots \otimes
v_k') =  v_2 \otimes v_1 \otimes \dots \otimes v_m \otimes v_1'
\otimes \dots \otimes v_k',$$ which can be combined with our
previous negation map on $\widetilde{\bw}^1\V .$
\end{lemma}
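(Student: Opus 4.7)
My plan is to mimic the argument used for Lemma~\ref{neg1}, adapting it to the exterior setting, since the two statements differ only in that we pass to the quotient by $(x^i\otimes x^i \sim 0)$ in type 1 or $(v\otimes v \sim 0)$ in type 2. The construction of $\Psi_{\bw^{\ge 1}\V}$ is parallel to $\Psi_{\overline T^{\ge 1}(\V)}$, and the switch of the first two tensor factors will again provide the negation.

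First, I would check that the regular representation is well-defined on the quotient, i.e., that left multiplication by $v \in \bw \V$ respects the defining relations $x^i \wedge x^i = 0$ (resp.\ $w\wedge w = 0$). This is immediate because the relations generating the congruence are themselves killed upon juxtaposition with any tensor. Next I would verify that $\Psi_{\bw^{\ge 1}\V}(v)$ is indeed an $\Acal$-module endomorphism of $\bw \V$ raising degree by exactly~$1$, and that $\Psi_{\bw^{\ge 1}\V}(v \wedge w) = \Psi_{\bw^{\ge 1}\V}(v)\circ\Psi_{\bw^{\ge 1}\V}(w)$ by associativity of the wedge product; homogeneity is immediate from degree counting. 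Injectivity is then the statement that $\bw \V$ is faithful as a module over $\bw^{\ge 1}\V$, which follows from the existence of the dual basis $\bm\d$ from~\ref{dub}, since $v \wedge x^{j_1} \wedge \cdots \wedge x^{j_k} = 0$ for all strictly increasing multi-indices disjoint from $\mathrm{supp}(v)$ forces $v = 0$.

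For the negation map, I would define, for $\Psi(v_1 \otimes \cdots \otimes v_m) \in \Psi(\bw^{\ge 2}\V)$, the map $(-)\Psi(v_1 \otimes \cdots \otimes v_m)$ by the displayed formula, and verify that this is well-defined modulo the exterior relations by repeating, verbatim, the identification done in Lemma~\ref{neg3}: swapping the first two tensor factors on the left agrees, after reassociating through the output, with swapping the first two tensor factors of the argument, because both differ from the identity by an even permutation (a product of two transpositions). This is the key calculation, and it is essentially the same one that appeared in the proof of Lemma~\ref{neg3}. The compatibility with the previously defined negation on $\widetilde{\bw}^1 \V$ (the $l$ vs.\ $r$ switch from Definition~\ref{defneg}) then follows because both are defined by moving a single factor past the initial one; concretely, $(-)\Psi(v)$ acting on $w_1 \wedge w$ gives $w_1 \wedge v \wedge w$, which is precisely the $r(v)$ description of $(-)v$ in degree $1$.

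The main obstacle I anticipate is confirming that the degree-one negation so produced is \emph{consistent} with the ambient negation on $\bw^{\ge 2}\V$ after one multiplies: i.e., for $u \in \widetilde{\bw}^1\V$ and $w \in \bw^{\ge 1}\V$ we need the identities $((-)u)\wedge w = u \wedge ((-)w) = (-)(u \wedge w)$ up to $\preceq_\circ$, which is exactly the subtlety that forced the passage from $T^{\ge 2}$ to $\overline{T}^{\ge 2}$ in Lemma~\ref{neg3}. The argument again reduces to the observation that any two ways of interchanging adjacent entries across the wedge boundary differ by an even permutation, hence are identified in $\overline T$. Once this is checked, the remaining axioms (order~$\le 2$, injectivity of the negation, and compatibility with scalar multiplication in~\eqref{eq:I.1.110}) are routine and follow formally from Lemma~\ref{reg2}.
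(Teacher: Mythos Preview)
Your proposal is correct and follows essentially the same approach as the paper. The paper does not give an explicit proof of this lemma; it treats it as the direct analog of Lemma~\ref{neg1} after modding out the exterior relations $(x^i\otimes x^i\sim 0)$ (type~1) or $(v\otimes v\sim 0)$ (type~2), which is precisely your strategy, though you supply considerably more detail (well-definedness on the quotient, faithfulness via the dual basis, and the even-permutation compatibility check) than the paper bothers to record.
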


We write $\widetilde{\bw}^k \V$ for $\Psi_{\bigwedge^{k} \V}$. Then
we have the negation maps $(-)\Psi_\alpha (v_1 \otimes v_2 \otimes
\dots) = v_2 \otimes v_1 \otimes \dots $ for $\alpha \in
\bigwedge^{0}\V = \Acal$ and
 $(-)\Psi_v (v_1 \otimes v_2 \otimes \dots) = v \otimes v_2 \otimes
v_1 \otimes \dots. $

\begin{definition} The \textbf{extended exterior} $\Acal$-semialgebra of type $1$ is
\be \left(\widetilde\bw\V , \w\right):={\big(\widetilde{\bw}^0\V
\,\oplus \big(\widetilde{\bw}^1\V \,\oplus \bw^{\geq 2}(\V
)\big)\over (x^i\otimes x^i\sim 0)}, \ee all viewed inside
$\End_\Acal(\bw^{\geq 2}\V )$    via the regular representation.
\end{definition}
%
%
%
%
%

(Note that in the classical algebraic situation,
$(-)\widetilde{\bw^1}\V  = \widetilde{\bw^1}\V .$) We identify
$\bw^{k}(\V )$ with $\widetilde{\bw^{k}}(\V )$ for $k\ge 2,$ so
$\tT_{\ge 2}(\V )$ is viewed inside  $\widetilde{\bw}\V .$
\begin{definition}
Let  $\tT (\bw\V )= \bw^1\V  \cup (-)\bw^1\V  \cup \tT_{\ge 2}(\V )$
with respect to the \textbf{juxtaposition product} that we denote
classically by  $\wedge$. The \textbf{extended exterior system}
(type 1) is
$$\left(
\widetilde\bw\V ,\tT (\bw\V ),(-),\preceq_\circ\right)$$

 The \textbf{extended exterior semialgebra}
of type 2, with its system, is
 defined analogously, with \be
\left(\overline{\widetilde\bw\V },\w)
\right)={\big(\widetilde{\bw}^0\V \,\oplus \big(\widetilde{\bw}^1\V
\,\oplus \bw^{\geq 2}(\V )\big))\over (v\otimes v\sim \zero)}, \quad
\qquad \forall v\in \V . \ee
\end{definition}

\begin{remark}
Both cases were studied in \cite{GaR} without the extension of
degree 1, and although we use the notation of Case I, the same
observations hold for Case~II). In fact, there is a
$\preceq$-epimorphism from $ \widetilde\bw\V $ to
$\overline{\widetilde \bw\V }$, sending every quasi-zero to $\zero$.
\end{remark}

Putting everything together, we have

\begin{theorem}\label{exth} For any semiring $F$, there is a functor from
$F$-modules to exterior systems, sending  $V$ to $(\widetilde\bw\V ,
\tT(\bw\V ), (-), \preceq_\circ)$,
\end{theorem}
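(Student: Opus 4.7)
The plan is to establish functoriality by first verifying that the object assignment is well-defined (which is essentially the content of Remark~\ref{exth0}, Lemma~\ref{neg17}, and the preceding construction), and then constructing the morphism assignment and checking it satisfies the axioms of a $\preceq$-morphism and the usual functoriality identities. Since the system $(\widetilde\bw\V,\tT(\bw\V),(-),\preceq_\circ)$ has been assembled piece-by-piece in degree $0$, degree $1$, and degree $\ge 2$, I would organize the proof along the same grading.

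First I would define, for any $F$-module homomorphism $f\colon V\to V'$, the map $\widetilde\bw f\colon\widetilde\bw\V\to\widetilde\bw\V'$ componentwise. On $\widetilde{\overline{T}}^0(\V)=\Acal\oplus(-)\Acal$, set $\widetilde\bw f$ to be the identity on each summand. On $\widetilde{\overline{T}}^1(\V)=\mathrm{Im}(l)\oplus\mathrm{Im}(r)$, set $\widetilde\bw f(l(u))=l(f(u))$ and $\widetilde\bw f(r(u))=r(f(u))$. On $\bw^{\ge 2}\V$, set $\widetilde\bw f(v_1\otimes\cdots\otimes v_k)=f(v_1)\otimes\cdots\otimes f(v_k)$, which descends through the congruence $\mathfrak R$ generating $\overline{T}^{\ge 2}$ because $f$ commutes with any permutation of tensor factors, and descends through the ideal $(x^i\otimes x^i\sim 0)$ (respectively $(v\otimes v\sim 0)$ in type 2) because $f(x^i)\otimes f(x^i)$ (resp.\ $f(v)\otimes f(v)$) is a square and hence already zero in the codomain.

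The bulk of the work is verifying that $\widetilde\bw f$ is a $\preceq$-morphism, i.e., the five properties listed after Proposition~\ref{Endsys}. Additivity and $\Acal$-linearity follow from multilinearity of the tensor product and the fact that $l,r$ are $\Acal$-linear. Preservation of $\zero$ is automatic. The critical point is compatibility with the negation map: on degree $\ge 2$ this reduces to $\widetilde\bw f(v_2\otimes v_1\otimes\cdots)=f(v_2)\otimes f(v_1)\otimes\cdots=(-)\widetilde\bw f(v_1\otimes v_2\otimes\cdots)$, which is immediate from the definition of $(-)$ via the switch map~\eqref{sw1}; on degree $1$ one has $\widetilde\bw f((-)l(u))=\widetilde\bw f(r(u))=r(f(u))=(-)l(f(u))=(-)\widetilde\bw f(l(u))$, and analogously with $l$ and $r$ interchanged. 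Finally, since $\widetilde\bw f$ is built from $f$ and the identity grading-wise, it preserves quasi-zeros $c^\circ=c(-)c$, and therefore $b\preceq_\circ b'$ implies $\widetilde\bw f(b)\preceq_\circ\widetilde\bw f(b')$.

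Functoriality itself is routine: $\widetilde\bw(\mathrm{id}_V)$ acts as the identity on each graded component by construction, and $\widetilde\bw(g\circ f)=\widetilde\bw g\circ\widetilde\bw f$ is checked on simple tensors, on $l(u),r(u)$, and on the degree-zero summands where it holds trivially. The main obstacle I anticipate is being careful that the extended degree-$1$ piece $\widetilde{\bw}^1\V=\mathrm{Im}(l)\oplus\mathrm{Im}(r)$, whose negation is defined indirectly via the regular representation into $\End_\Acal(\bw^{\ge 2}\V)$, is mapped compatibly; concretely, one must check that the diagram defining $\widetilde\bw f$ on $\mathrm{Im}(r)$ intertwines correctly with the juxtaposition product used in the definition of $(-)v$, so that $\widetilde\bw f$ descends to an $\End$-level map without spoiling Definition~\ref{def:defnegmap}. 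Once this compatibility is established, the remaining axioms and functoriality follow.
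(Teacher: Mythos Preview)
The paper offers essentially no proof of this theorem: it writes ``Putting everything together, we have'' immediately before the statement and then moves on. The content is meant to be absorbed by the preceding constructions (Remark~\ref{exth0}, Lemma~\ref{neg17}, and the surrounding definitions), and the morphism-level verification of functoriality is left entirely implicit. In that sense your proposal goes well beyond what the paper does, supplying the assignment $f\mapsto\widetilde\bw f$ on morphisms and checking the $\preceq$-morphism axioms explicitly; this is exactly the sort of detail the paper suppresses.

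There is, however, a genuine gap in your treatment of the type~1 case. You claim the induced map descends through the congruence $(x^i\otimes x^i\sim 0)$ ``because $f(x^i)\otimes f(x^i)$ is a square and hence already zero in the codomain.'' That reasoning is valid for type~2, where \emph{every} square is declared zero, but in type~1 only the squares of the chosen basis elements $x^i$ are set to zero (cf.\ the remark after the type~1 definition that $(x^i+x^j)^{\otimes 2}=x^i\otimes x^j+x^j\otimes x^i$). A general $F$-module map $f$ need not carry basis elements to basis elements; if $f(x^i)=(x')^j+(x')^k$ with $j\ne k$, then $f(x^i)\otimes f(x^i)=(x')^j\otimes(x')^k+(x')^k\otimes(x')^j$ is a quasi-zero but not zero in the type~1 codomain. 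Thus your argument only yields a strict homomorphism in type~2; in type~1 one must either restrict the source category (e.g.\ to free modules with maps respecting bases) or accept that $\widetilde\bw f$ is merely a $\preceq$-morphism, with equality replaced by $\preceq_\circ$. The paper does not confront this point either, so the gap is shared, but it is worth flagging.
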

\noindent
which is the main statement enabling us to view the exterior theory in terms of systems.

\claim{\bf Partitions.} We can reinterpret the extended exterior semialgebra combinatorially
via Young Diagrams. Let
$$
\Pcal_{r,n}:=\{(\lambda_1\geq\cdots\geq
\lambda_r)\in\NN^r\quad|\quad \lambda_1\leq n+1-r\}
$$
 be the set of all partitions (typically denoted as $\blamb$) with at most  $r$ components, whose Young diagrams are
 contained in an $r\times (n-r)$ rectangle.
We write $\Pcal_r$ for~$\Pcal_{r,\infty}$ and $\Pcal$ for
$\Pcal_\infty$. The \textbf{weight} of a partition is $|\blamb|=\sum
\lambda_i$.
 The partition $\blamb'$ \textbf{conjugate} to $\blamb$
is the partition whose Young diagram is the transpose of that
of~$\blamb$. For example if $\blamb=(3,3,2,1,1)$ then
$\blamb'=(5,3,2)$.

Let $\Acal$ be a semialgebra, $n\in\NN^+\cup\{\infty\}$, and $0\leq
r<n$.
 Let $n\in\NN^+\cup\{\infty\}$, and $\V_n$ be the free $\Acal$-module of rank $n$ with
 base $\bfx^n:=\{x^i: \ {0\leq i<n}\}$ (cf.~Definition~\ref{modu13}),
 i.e.~$ \V_n=\bigoplus_{0\leq i<n}\Acal  x^i.$
\begin{definition} Let $0\leq r<n$. The \textbf{extended exterior $\Acal$-module} $E^r(\V_n)$
is defined as follows: $E^0(\V_n):=\Acal$, $E^1(\V
_n):=\Acal^{(n)},$ and for $r \geq 2$,
$$
E^r(\Vreg_n):=\bigoplus_{\blamb\in\Pcal_{r,n}}\Acal \blamb,
$$
the  free $\Acal$-module generated by $ \Pcal_{r,n}$.
\end{definition}
Following  traditional notation we   write
$$
E^r(\Vreg_n):=\bigoplus_{\blamb\in\Pcal_{r,n}}\Acal \cdot \wbx^r_\blamb,
$$
where $ \wbx^1_\blamb=x^{\lambda_1}$ and for $r\geq 2$,
 \be
 \wbx^r_\blamb=x^{r-1+\lambda_1}\w
x^{r-2+\lambda_2}\w\cdots\w   x^{\lambda_r}, \ee although this
notation is  redundant since $r$ is understood from the partition.
By convention $E^r(\Vreg_n)=0$ if $r\ge n$. (This follows from  $x^i
\bw x^i = 0.$) The total \textbf{extended exterior $\Acal$--module}
is, by definition, the free $\Acal$-module of rank~$2^n$:
$$
E(\Vreg_n)_=\bigoplus_{r\geq 0}E^r(\Vreg_n)
$$
 Let $\{e_1,\ldots,e_r)$ be
commuting indeterminates over $\Acal$.
\begin{proposition}
There is an $\Acal$-module epimorphism
$$
\Bcal_r(\Acal):=\Acal[e_1,\ldots, e_r]\sra E^r(\Vreg_n),
$$
given in the proof, which is an isomorphism if and only if
$n=\infty$.
\end{proposition}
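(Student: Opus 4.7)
The plan is to construct $\varphi$ at the level of $\Acal$-bases. As an $\Acal$-module, $\Bcal_r(\Acal)=\Acal[e_1,\ldots,e_r]$ is free on the monomials $\{e_1^{a_1}\cdots e_r^{a_r}:(a_1,\ldots,a_r)\in\NN^r\}$, while $E^r(\Vreg_n)$ is free on $\{\wbx^r_\blamb:\blamb\in\Pcal_{r,n}\}$. The classical ``staircase'' bijection $\NN^r\leftrightarrow\Pcal_r$ sends $(a_1,\ldots,a_r)$ to $\blamb=(\lambda_1,\ldots,\lambda_r)$ with $\lambda_i:=a_i+a_{i+1}+\cdots+a_r$ (inverse $a_i=\lambda_i-\lambda_{i+1}$, and $\lambda_{r+1}:=0$). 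Using this, define $\varphi(e_1^{a_1}\cdots e_r^{a_r}):=\wbx^r_\blamb$ when the associated $\blamb$ lies in $\Pcal_{r,n}$ (equivalently, $\lambda_1=a_1+\cdots+a_r\leq n-r$), and $\varphi(e_1^{a_1}\cdots e_r^{a_r}):=\zero$ otherwise. Extend $\Acal$-linearly.

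Surjectivity is then immediate: each basis element $\wbx^r_\blamb$ of $E^r(\Vreg_n)$ is the image of the monomial $e_1^{\lambda_1-\lambda_2}\cdots e_{r-1}^{\lambda_{r-1}-\lambda_r}e_r^{\lambda_r}$, so $\varphi$ is an $\Acal$-module epimorphism. For the ``if'' direction, observe that when $n=\infty$ the constraint $\blamb\in\Pcal_{r,n}$ is vacuous, so $\varphi$ matches the monomial basis of $\Bcal_r(\Acal)$ one-to-one with the partition basis of $E^r(\Vreg_\infty)=\bigoplus_{\blamb\in\Pcal_r}\Acal\cdot \wbx^r_\blamb$, yielding an $\Acal$-module isomorphism. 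For the converse, when $n$ is finite the $\Acal$-rank of $E^r(\Vreg_n)$ is $\binom{n}{r}$ whereas $\Bcal_r(\Acal)$ has countably infinite $\Acal$-rank, so $\varphi$ cannot be injective; explicitly, its kernel contains every monomial $e_1^{a_1}\cdots e_r^{a_r}$ with $a_1+\cdots+a_r>n-r$.

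What requires care---rather than constituting a genuine technical obstacle---is the conceptual justification of this particular bijection. The staircase correspondence reflects Newton's identification of $\Acal[e_1,\ldots,e_r]$ with the ring of symmetric polynomials in $r$ fictitious variables, whose Schur basis is classically parametrized by partitions of length at most $r$, matching the partition basis of the exterior module. An a priori more natural formulation would be to send $P(e_1,\ldots,e_r)$ to $P(\sigma_1,\ldots,\sigma_r)\,\wbx^r_{\mathbf 0}$ via the Schubert derivations and argue surjectivity by a Giambelli-type identity $\wbx^r_\blamb=s_\blamb(\sigma_1,\ldots,\sigma_r)\,\wbx^r_{\mathbf 0}$. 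Establishing such an identity in our setting would, however, demand a semialgebraic surrogate for the Jacobi--Trudi/Giambelli determinantal formula, whose native statement involves negation; the direct basis-level construction above sidesteps exactly this semiring difficulty while producing the same epimorphism.
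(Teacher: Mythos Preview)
Your proof is correct and follows essentially the same route as the paper. The paper phrases the bijection $\NN^r\to\Pcal_r$ by sending $e_1^{i_1}\cdots e_r^{i_r}$ to the partition conjugate to $(1^{i_1}2^{i_2}\cdots r^{i_r})$, which unwinds to precisely your staircase formula $\lambda_j=a_j+\cdots+a_r$; your treatment of the ``only if'' direction (via rank comparison and an explicit nonzero kernel element) is more detailed than the paper's terse remark.
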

\proof The natural base of $\Bcal_r(\Acal)$ is given by all the
monomials $e_1^{i_1}\cdots e_r^{i_r}$. Let us denote by $\blamb\in\Pcal_r$  the partition with at most $r$ parts conjugated to
$(1^{i_1}\cdots r^{i_r})$. Therefore
the claimed epimorphism is given by
$$
e_1^{i_1}\cdots e_r^{i_r}=e^\blamb\mapsto \wbx^r_\blamb
$$
and the image is zero  if and only if $r\geq n$.\qed

\claim{\bf The action of the symmetric group.}
Let ${\mathbb X}^r \subset \NN^r$ be the set of all $r$-tuples of distinct natural numbers, acted  on by the subgroup $\Sym _r^+$ of even permutations  of $\Sym _r$ on $\{ 0,1,\dots ,r-1\}$
 (whose action on $\mathbb X$ is switching the first two components). Let $\rho_r:=(r-1,r-2,\ldots,0)\in {\mathbb X}^r$ (we write $\rho$
 when $r$ is understood), and fix the  transposition $\tau =(1 2)\in \Sym _r$.
Then ${\mathbb X}^r$   decomposes into the disjoint union
 \begin{equation} \label{ch}
{\mathbb X}^r=\, \Sym _r^+\cdot (\rho +\Pcal_r)  \,\cup    \,  \tau
(\Sym _r^+\cdot (\rho +\Pcal_r)),
\end{equation}
which also defines a
 map
$$
\Lambda:{\mathbb X}^r\sra \Pcal_r
$$
given by $$\Lambda(i_1,\ldots, i_r)=
\Lambda(\tau(i_1,\ldots,i_r))=\blamb, \quad \forall
(i_1,\ldots,i_r)\in \Sym _r^+\cdot (\rho+\blamb).$$ We  also have a
map ${\mathbb X}^r/\Sym _r^+\sra \{\rho,\tau\rho\}$  sending
$(\rho+\blamb)\mapsto \rho,$  $\ \tau(\rho+\blamb)\mapsto \tau\rho$.

\claim{\bf The combinatorial construction.}\label{combcon} We define
a negation map on $\mathbb X^r$ by putting
$$(-)(i_1,\ldots,i_r) = \tau(i_1,\ldots,i_r), $$ noting that $ \tau(i_1,\ldots,i_r)\Sym _r^+=
(i_2,i_1,\ldots,i_r)\Sym _r^+,$ and define $(-)E^r(
V_n)$ to
be
$$\{v_2 \wedge v_1 \wedge \dots \wedge v_r  : v_1 \wedge v_2 \wedge \dots \wedge
v_r \in E^r(\mathcal V_n)\}.$$

For all $r\geq 2$ we define
$$
\widetilde{\bigwedge}^{r}V_n:= E^r(V_n)(-)E^r(\Vreg_n)
$$
where $\wbx^r_{(-)\blamb}=(-)\wbx^r_\blamb:=x^{r-2+\lambda_2}\w
x^{r-1+\lambda_1}\w\cdots\w x^{\lambda_r}$. It is an algebra with
respect to juxtaposition
$$
\wbx^r_\blamb\w
\wbx^s_\bmu=\wbx^{r+s}_{\Lambda(\rho_r+\blamb,\rho_s+\bmu)}.
$$
The map $(-): \widetilde{\bw}^r\Vreg_n\sra \widetilde{\bw}^r\Vreg_n $
interchanging the sheets over $E_n^r(\Vreg_n)$ is the negation map
of $\widetilde{\bw}^r\V_n$. Define
$$
\widetilde{\bigwedge}^{\geq 2}\Vreg_n:=\bigoplus_{r\geq
2}\widetilde{\bw}^r\Vreg_n.
$$

\begin{proposition}
One can define two  maps $\Vreg_n\times \Vreg_n \sra
\widetilde{\bw}^{\ge 2}\V $, which are respective $\Acal$-linear
extensions of the set-theoretic maps $(x^i,x^j)\mapsto x^i\w x^j$
and $(x^i,x^j)\mapsto (-)x^i\w x^j=x^j\w x^i$.
\end{proposition}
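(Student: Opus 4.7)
My plan is to exploit the freeness of $\V_n = \bigoplus_{0\le i<n}\Acal x^i$ as an $\Acal$-module. Since $\{x^i:0\le i<n\}$ is an $\Acal$-basis, any set-theoretic map $\{x^i\}\times\{x^j\}\to M$ into an $\Acal$-module $M$ extends uniquely to an $\Acal$-bilinear map $\V_n\times\V_n\to M$ (equivalently, it factors through the tensor product $\V_n\otimes_\Acal\V_n$, whose basis consists of the simple tensors $x^i\otimes x^j$). Applying this with $M = \widetilde{\bw}^{\ge 2}\V$ reduces both constructions to specifying the images of basis pairs.

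For the first map I would set $\mu_1(x^i,x^j) := x^i\w x^j \in \widetilde{\bw}^2\V$ and extend bilinearly, yielding
$$\mu_1\Big(\sum_i a_i x^i,\,\sum_j b_j x^j\Big) \;=\; \sum_{i,j} a_i b_j\, x^i\w x^j.$$
For the second I would set $\mu_2(x^i,x^j) := x^j\w x^i$ and extend bilinearly; equivalently, post-compose $\mu_1$ with the negation map, since the defining relation $(-)(u\otimes v) = v\otimes u$ on $\overline T^{\ge 2}(\V)$ descends to $(-)(x^i\w x^j) = x^j\w x^i$ in $\widetilde{\bw}^{\ge 2}\V$. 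Thus $\mu_2 = (-)\circ\mu_1$, and the second identity in the statement, $(-)x^i\w x^j = x^j\w x^i$, is recovered on basis pairs by construction.

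The only substantive check is compatibility with the type-specific congruences that cut out $\widetilde{\bw}^{\ge 2}\V$. In type 1 we impose $x^i\w x^i = 0$, and this already holds on the target side for each basis pair, so the diagonal contributions $a_i b_i\, x^i\w x^i$ vanish automatically and bilinear extension is unambiguous. In type 2 the stronger relation $v\w v = 0$ must be respected; the content here is that for $v = \sum a_i x^i$ one gets $\mu_1(v,v) = \sum_{i<j} a_i a_j(x^i\w x^j + x^j\w x^i)$, which is zero in type 2 precisely because $(x^i+x^j)\w(x^i+x^j) = 0$ expands to $x^i\w x^j + x^j\w x^i = 0$. I expect this final identity to be the only real obstacle; once it is in hand, both $\mu_1$ and $\mu_2$ are well-defined $\Acal$-bilinear maps, and the identification $\mu_2 = (-)\circ\mu_1$ drops out of the definition of the negation map on $\widetilde{\bw}^{\ge 2}\V$.
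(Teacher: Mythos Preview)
Your core idea is exactly what the paper has in mind: the paper's own proof is the single word ``Clear,'' which reflects nothing more than the freeness argument you give in your first paragraph. Since $\{x^i\}$ is an $\Acal$-basis of $\V_n$, any assignment $(x^i,x^j)\mapsto m_{ij}\in M$ extends uniquely to an $\Acal$-bilinear map $\V_n\times\V_n\to M$, and that is the whole content of the proposition.

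Your second and third paragraphs, however, introduce a confusion you should drop. The ``compatibility with type-specific congruences'' is not a check that needs to be made: the relations $x^i\w x^i = 0$ (type~1) or $v\w v = 0$ (type~2) live entirely on the \emph{target} side and simply determine which $\Acal$-module $\widetilde{\bw}^{\ge 2}\V$ you are mapping into. There is no relation on the source $\V_n\times\V_n$ to be respected, so there is no well-definedness issue. In particular, whether or not $\mu_1(v,v)=0$ holds for general $v$ is irrelevant to the existence of the bilinear extension (and in type~1 it is in fact \emph{not} zero but a quasi-zero $\sum_{i<j} a_ia_j(x^i\w x^j)^\circ$). Delete that discussion and your proof is complete and identical in spirit to the paper's.
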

\proof Clear.\qed

We view  our subsequent examples in this notation.
\section{Clifford semialgebras}\label{sec:sec5}
As in the classical case,  Clifford semialgebras are obtained by generalizing exterior semialgebras, using the
 $\preceq$-version of a quadratic form   motivated by \cite{IKR}. Moreover, the semialgebra of endomorphisms of an exterior semialgebra provides a natural example of Clifford semialgebra in our sense.

\claim{\bf $\preceq$-Bilinear and quadratic forms.} We begin by recalling the following:

\begin{defn}[{cf.~\cite[Definition~6.1]{IzhakianKnebuschRowen2010LinearAlg}}]\label{bilf}
A ${\preceq}$-\textbf{bilinear form} on a  systemic module $(V,
\tT_{V}, (-),\preceq)$ over a semiring ${\mathcal A}$ is a map
$$\Bb: V \times V \to {\mathcal A}$$ which satisfies
\begin{enumerate}
\item $\Bb(a_0 v_0 + a_1 v_1, w) \succeq a_0 \Bb(v_0,w) + a_1
\Bb(v_1, w), $
\item $\Bb(v, a_0 w_0 + a_1 w_1) \succeq a_0 \Bb(v,w_0) +
a_1 \Bb(v, w_1),$
\end{enumerate}
$\forall v, v_i, w , w_i \in V.$

 A  ${\preceq}$-bilinear form $\Bb:
V\times V\to {\mathcal A}$ is \textbf{symmetric} if $\Bb(v,w) =
\Bb(w,v)$ for all~$v,w\in V.$
\end{defn}
\begin{example}
\begin{enumerate}\eroman
\item
A \textbf{bilinear form} is a $\preceq$-bilinear form where $=$
replaces $\preceq$, and as in the classical theory is obtained by an
$n\times n$ matrix $B$; the form is symmetric iff $B$ is a symmetric
matrix. In particular, one could take the 0 form.
\item   Suppose $A$ is zero sum free, in the sense that $a+a' =0 $ implies $a=a'=0.$ Fix a base
$\{ x^1, \dots, x^n\}$ of $V$ and define $\ind(v)$ for a vector $v =
\sum a _i x^i$ to be the number of nonzero coefficients. Given a
  ${\preceq}$-bilinear form $\Bb:
V\times V\to {\mathcal A}$, define
$$\Bb'(v,w) = {\ind(v)\ind(w)}\Bb(v,w)^{\circ}.$$ Since $\ind
(v_0+v_1) \ge \max \{\ind(v_0) ,\ind(v_1)\}$ we see that

 \begin{equation} \label{Miy24} \begin{aligned}
\Bb'(a_0 v_0 + a_1 v_1, w)& = {\ind(v_0+v_1)\ind(w)} \Bb(a_0 v_0 +
a_1 v_1, w)^{\circ} \\ & \succeq {\ind(v_0+v_1) \ind(w)}(a_0
\Bb(v_0,w)^{\circ} + a_1 \Bb(v_1, w)^{\circ})\\ & \succeq {\ind(v_0)
\ind(w)}a_0 \Bb(v_0,w)^{\circ} + \ind(v_1)\ind(w) a_1 \Bb(v_1, w)^{\circ}\\
& \succeq a_0 \Bb'(v_0,w) + a_1 \Bb'(v_1, w).\end{aligned}
\end{equation}

\item The sum of two ${\preceq}$-bilinear forms is a ${\preceq}$-bilinear
form, so (ii) gives us a considerable range of examples.
\end{enumerate}
\end{example}

The reason we use $\preceq$ is to prepare for
Construction~\ref{stdG}, Construction~\ref{stdG0}, and
Construction~\ref{stdGN} below; its use is consistent with the next
definition.

\begin{defn}
 A ${\preceq}$-\textbf{quadratic
form} on a  systemic ${\mathcal A_0}$-module  $(V, \tT_{V},
(-),\preceq)$ is a function $q: V\to {\mathcal A_0}$ with
\begin{equation}\label{eq:I.1.11} q(av)\succeq a^2q(v)\end{equation}
 for any $a\in {\mathcal A},$ $v\in V,$ together with a
symmetric ${\preceq}$-bilinear form $\Bb: V\times V\to {\mathcal
A}$ (not necessarily uniquely determined by $q$) such that for any
$ v,w\in V,$
\begin{equation}\label{eq:I.1.27} 2 q(v) = \Bb(v,v),\qquad q(v+w)\succeq q(v)+q(w)+\Bb(v,w).\end{equation}
  We call $(q ,\Bb)$ a
${\preceq}$-\textbf{quadratic pair} over ${\mathcal A}$.

A \textbf{quadratic pair}  is a ${\preceq}$-quadratic pair $(q
,\Bb)$ for which $\Bb$ is a bilinear form and $q(v+w)=
q(v)+q(w)+\Bb(v,w)$ for all $v,w \in V.$
\end{defn}

\claim{\bf Abstract definition of Clifford
${\preceq}$-semialgebra.}

Just as Clifford algebras are essential to the theory of quadratic
forms, we need a systemic version for this paper and for further
research in quadratic forms. As we shall see, this is a more
delicate issue than exterior semialgebras. We start with the desired
defining properties.

\begin{defn}\label{Cliff1} A \textbf{Clifford ${\preceq}$-semialgebra} $\mathcal C(q,B,V)$ of  a ${\preceq}$-quadratic pair $(q,B)$ over ${\mathcal A}$,
 is a semialgebra~$\mathcal C$ generated by  ${\mathcal A}$ and $V$,
together with  a  product $\mathcal C \times \mathcal C \to \mathcal
C$ satisfying \be \label{sq} v^2 \succeq q(v) \qquad  \text{for}
\qquad  v\in V,\ee
      \be \label{wn} v_1v_2+
 v_2v_1 \succeq \Bb(v_1,v_2), \quad  \forall v_i\in V.
\ee
\end{defn}

Lacking negation, it could be unlikely to achieve equality in
\eqref{wn}. Nonetheless we have:
\begin{constr}\label{stdG} The \textbf{standard  Clifford
semialgebra} $\widetilde{\mathcal C}(q,B,V)$ is defined by imposing
the relation $\preceq$ on $T(V)$ via Lemma~\ref{makesur}, taking
$$S_0 = \{\left(\Bb(v_1,v_2),v_1\otimes v_2+
 v_2\otimes v_1  \right),\ \left( q(v), v\otimes v \right):\, v, v_i \in V\},$$ and $S$ the $\tT$-submodule
of $T(V)\times T(V)$ generated by $S_0$.\end{constr} Thus a standard
Clifford semialgebra becomes an exterior semialgebra of type~1 when
$q = B= 0.$  But this does not yet have a negation map when $q \ne
0$. When $\Acal$ already possesses a negation map, we can define a
negation map by defining $(-)v = ((-)\one)v$ and ($-)(v_1\otimes
v_2\otimes\dots \otimes v_k) .$

 Accordingly, we equip $\Acal$ with a negation map. This can just be
 the identity, as in tropical algebra, or else we use
  to the ``symmetrization process,'' which we recall from
 \cite[Definition~3.6]{Row16}:

  \begin{definition}\label{sym00}
Define $\widehat {\mathcal A} = \mathcal A \times \mathcal A$
with componentwise addition, with multiplication
  given by the \textbf{twist action}
$$(a_0,a_1)(b_0,b_1) = (a_0 b_0 + a_1 b_1, a_0 b_1 + a_1 b_0), \quad a_i, , b_i \in \mathcal A_0.$$
The negation map on $\widehat {\mathcal A}$ is the \textbf{switch
map} $ (-)_{\operatorname{sw}}$ given by
 \be\label{C10}(-)_{\operatorname{sw}}(b_0,b_1) = (b_1,b_0).\ee The surpassing
relation $\preceq_{\circ}$
 is given by:

\begin{equation} (b_0,b_1) \preceq_{\circ}  (b_0',b_1')  \quad
\text{iff} \quad  b_i'   = b_i + c  \  \text{for some } c \in
{\mathcal A}, \ i = 0,1.\label{eq:shneg}
\end{equation}
(The same $c$ is used for both components.)
 \end{definition}

So we first symmetrize $\Acal$ and $V$ to $\widehat {\mathcal A}$
and $\widehat V$, and then form $\mathcal C = \widetilde{\mathcal
C}(q,B,\widehat V) $ and the Clifford system $({\mathcal C}
,\tT_{\mathcal C}, (-), \preceq_\circ)$, with $(-)$ as in
\eqref{C10}.

\begin{lem}  $\mathcal C$ as defined in the paragraph above is a Clifford semialgebra, and  $({\mathcal C}
,\tT_{\mathcal C}, (-), \preceq_\circ)$ is a  system.\end{lem}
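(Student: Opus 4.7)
The plan is to verify first that $\mathcal C$ satisfies the defining properties of Definition~\ref{Cliff1}, and then check in turn each axiom for a $\preceq$-system. The first half is essentially tautological from Construction~\ref{stdG}: the surpassing relation on $T(\widehat V)$ is, by Lemma~\ref{makesur}(ii), the smallest $\tT$-submodule (with negation) containing $S_0 + (\zero\times T(\widehat V)^\circ)$, so the pairs $(q(v), v\otimes v)$ and $(\Bb(v_1,v_2),\, v_1\otimes v_2 + v_2\otimes v_1)$ lie in $S$ by construction. Passing to the quotient by $S$ gives exactly \eqref{sq} and \eqref{wn}, so $\mathcal C$ is a Clifford $\preceq$-semialgebra.

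Next I would set up the systemic data. Define $\tT_{\mathcal C}$ to be the multiplicative monoid generated inside $\mathcal C$ by the tangible images of $\widehat V$, i.e.\ the simple words $\hat v_1 \otimes \cdots \otimes \hat v_k$ where each $\hat v_i$ is either $(v_i,0)$ or $(0,v_i) = (-)_{\operatorname{sw}}(v_i,0)$. This set is closed under $(-)_{\operatorname{sw}}$ (the switch map permutes the two components of the symmetrization), and together with $\zero$ additively generates $\mathcal C$ because $\widehat V$ generates $\mathcal C$ as a semialgebra and every element of $\widehat V$ is a sum of such tangibles. The surpassing relation $\preceq_\circ$ is defined by $b\preceq_\circ b'$ iff $b' = b + c^\circ$ for some $c$, and Conditions~\ref{condi}(i)--(ii) as well as \ref{condi1}(iii)--(iv) are immediate because $(-)$ is additive and $\tT$-linear.

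The delicate points are axioms (v) and (vi). For (v), if $a, a' \in \tT_{\mathcal C}$ and $a \preceq_\circ a'$, then $a' = a + c^\circ$ for some $c$; but in the symmetrized tensor semialgebra every quasi-zero $c^\circ$ has nonzero contribution in both components of each coordinate, and a pure tangible word has a single nonzero component in each coordinate, so $c^\circ = \zero$ and $a = a'$. For (vi), suppose $a + a' \succeq_\circ \zero$ with $a, a' \in \tT_{\mathcal C}$; then $a + a' = c^\circ = c + (-)_{\operatorname{sw}} c$ for some $c \in \mathcal C$, and matching tangible summands (again using that each tangible word has a distinguished component structure under the switch map) forces $a' = (-)a$. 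The main obstacle in this step is that the defining relations $S_0$ identify pairs of genuinely different-length words, so one must confirm that the quotient does not conflate tangibles with quasi-zeros; this is ensured by the fact that $S_0$ is purely a surpassing relation (not a congruence), together with the disjointness of $\tT_{\widehat V}$ from $\widehat V^\circ$ after symmetrization. Once these two axioms are established, all the remaining system requirements follow formally.
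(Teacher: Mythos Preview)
The paper's own proof is a single sentence: ``It follows from the fact that \eqref{C10} defines a negation map.'' In other words, the authors regard the lemma as immediate once one knows that the switch map on the symmetrization is a bona fide negation map, and they do not spell out the individual system axioms at all.

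Your proposal is correct in spirit and considerably more detailed than the paper's one-liner, but there is one terminological slip worth flagging. You speak repeatedly of ``passing to the quotient by $S$,'' but Construction~\ref{stdG} does \emph{not} form a quotient: the underlying semialgebra of $\widetilde{\mathcal C}(q,B,\widehat V)$ is still the full tensor semialgebra $T(\widehat V)$, and $S$ merely defines the surpassing relation $\preceq$ on it. This is precisely the point you yourself make at the end (``$S_0$ is purely a surpassing relation, not a congruence''), and it is what makes axioms (v) and (vi) straightforward: since no identifications are made in the carrier, the tangible elements are literally the simple tensors of symmetrized basis elements $(v,0)$ or $(0,v)$, and disjointness from quasi-zeros follows from the free-module structure of $T(\widehat V)$ over $\widehat{\mathcal A}$. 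Once you drop the word ``quotient'' and state this directly, your verification of (v)--(vi) is clean and in fact supplies the details the paper omits.
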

\proof It follows from the fact that \eqref{C10} defines a negation
map. \qed

 We can construct a Clifford
semialgebra a little more efficiently using ideas of
\S\ref{combcon}.

\begin{constr}\label{stdG0} The \textbf{standard   Clifford
semialgebra with negation map} $\mathcal C(q,B,V,(-))$ is defined as
follows:

Define the
  tensor module~$T_{\ge 0}(\V )$ of a free
${\mathcal A}$-module $\V $, as the submodule of $T(\V )$ spanned
by $\{ x^{i_1} \otimes \dots \otimes x^{i_k}: k \in \Net   , \ i_1
\le i_2 \le \dots \le i_k \}$ and take $T = \Acal \oplus T_{\ge
0}(\V ) \oplus (-)T_{\ge 0}(\V )$ (where $(-)T_{\ge 0}(\V )$ is
another copy of $T_{\ge 0}(\V )$) under multiplication
\be\label{Cliff00}  x^j x^i = \Bb(x^i,x^j) + (-) x^i \otimes x^j, \
\forall j>i,\ee and inductively
 \be\label{Cliff07}(x^{i_1} \otimes \dots \otimes x^{i_k})\otimes x^j =
 \begin{cases}
 x^{i_1} \otimes \dots \otimes x^{i_k}\otimes x^j , \quad k\le j  \\
\Bb(x^k,x^j) x^{i_1} \otimes \dots \otimes x^{i_{k-1}} + ((-)
x^{i_1}
\otimes \dots \otimes x^{i_{k-1}}  \otimes x^j \otimes x^{i_k} ),\\
\quad k>j ,
 \end{cases}\ee
 where $(-)x^i = x^i,$
extended distributively.
\end{constr}

\begin{theorem} $\mathcal C(q,B,V,(-))$ of Construction~\ref{stdG0} is a Clifford semialgebra with negation map.
 Define \be\begin{aligned} \preceq \ : = \ \preceq_\circ   \cup  \bigcup _{k
  \in\Net^+} \{x^{i_1} \otimes \dots \otimes x^{i_j}\otimes x^{i_j}\otimes
 \dots \otimes x^{i_k},\ \ q( x^{i_j}) x^{i_1} \otimes  \dots \otimes x^{i_k}\},\end{aligned}\ee  $i$ running over $\Net^+,$
  where $ x^{i_j}$ is deleted in the tensors on the right side. (In other words we declare that $ x^{i_j}\otimes  x^{i_j}
  \preceq q( x^{i_j})$.) 

  Taking
$\tT_{\mathcal C(q,B,V,(-))}$ to be the simple tensors and their
negations, $$(\mathcal C(q,B,V,(-)),\tT_{\mathcal
C(q,B,V,(-))},(-),\preceq)$$ is a
  system.
   There is an injection $\Phi: \widetilde{\mathcal
C}(q,B,V)\to {\mathcal C}(q,B,V,(-))$ given by $$ x^{i_1} \otimes
\dots \otimes x^{i_k}\mapsto (-)^\pi  x^{\pi(i_1)} \otimes \dots
\otimes x^{\pi(i_k)},$$ where $\pi$  is the permutation rearranging
$i_1,\dots, i_k$ in ascending order.
\end{theorem}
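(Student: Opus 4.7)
The theorem makes three separate assertions: that $\mathcal C(q,B,V,(-))$ of Construction~\ref{stdG0} is a Clifford semialgebra with negation map, that with the displayed $\preceq$ it forms a system, and that there is a natural injection $\Phi$ from $\widetilde{\mathcal C}(q,B,V)$ into it. I would treat these in turn, but the entire argument rests on first showing that \eqref{Cliff00}--\eqref{Cliff07} yield a well-defined, associative product on $T=\Acal\oplus T_{\ge 0}(\V)\oplus(-)T_{\ge 0}(\V)$. The $\Acal$-module structure is immediate from the direct-sum decomposition; the inductive rules give a straightening algorithm rewriting any tensor of $x^i$'s into ordered form, much as in the classical Clifford case. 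Associativity I would verify by a Bergman-style diamond argument on triples $x^i\otimes x^j\otimes x^k$: the two natural reductions (collapsing the leftmost adjacent descent first, versus the rightmost) must land on the same element of $T$, which comes down to bilinearity of $\Bb$ together with the switch-map property of $(-)$.

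Once the semialgebra structure is in hand, the Clifford axioms come almost for free. The anticommutation relation \eqref{wn} for basis elements is built into~\eqref{Cliff00} as an equality, and for general $v_1,v_2\in V$ it extends to the $\preceq$-form by $\preceq$-bilinearity of~$\Bb$. For \eqref{sq}, write $v=\sum_i a_i x^i$ and expand $v^2=\sum_{i,j}a_ia_j\,x^i x^j$; the off-diagonal contributions combine using \eqref{wn} into $\sum_{i<j}a_ia_j\Bb(x^i,x^j)$ (modulo quasi-zeros), while the diagonal terms $a_i^2(x^i)^2$ surpass $a_i^2 q(x^i)$ by the very definition of $\preceq$, and these sum to something surpassing $q(v)$ by \eqref{eq:I.1.27}.

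For the system axioms I would apply Lemma~\ref{makesur}. The tangible set $\tT_{\mathcal C(q,B,V,(-))}$, consisting of ordered simple tensors and their $(-)$-copies, is closed under negation, additively generates, and is disjoint from the quasi-zeros thanks to the two-sheeted form of $T$; unique negation follows because an ordered simple tensor $x^{i_1}\otimes\cdots\otimes x^{i_k}$ and its $(-)$-partner lie in distinct summands. The displayed relation $\preceq$ augments $\preceq_\circ$ only by the pairs encoding $q(x^{i_j})\preceq x^{i_j}\otimes x^{i_j}$ inside longer tensors, and Lemma~\ref{makesur}(ii) guarantees that the $\tT$-module closure remains a surpassing relation. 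Condition~(v) on equality of surpassing tangibles holds because distinct ordered tangibles cannot be related by the generators of $\preceq$ without a quasi-zero intervening, which would force them into the same summand.

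For the injection $\Phi$ I would define $x^{i_1}\otimes\cdots\otimes x^{i_k}\mapsto(-)^{\pi} x^{\pi(i_1)}\otimes\cdots\otimes x^{\pi(i_k)}$ on simple tensors with $\pi$ the ordering permutation, and extend $\Acal$-linearly; the sign is well-defined modulo the switch action because $\pi$ is defined up to even permutations. Checking that $\Phi$ factors through $\widetilde{\mathcal C}(q,B,V)$ amounts to verifying that the generators of $S_0$ in Construction~\ref{stdG} map into $\preceq$-pairs: the pair $(q(v),v\otimes v)$ maps to the defining $\preceq$-pair of $\mathcal C(q,B,V,(-))$, while $(\Bb(v_1,v_2),v_1\otimes v_2+v_2\otimes v_1)$ becomes, after reordering, $(\Bb(v_1,v_2), v_1\otimes v_2\,(-)\,v_1\otimes v_2+\Bb(v_1,v_2))$, which is compatible with~$\preceq_\circ$ via~\eqref{Cliff00}. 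Injectivity is then clear: distinct ordered simple tensors remain distinct tangibles in $\mathcal C(q,B,V,(-))$.

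The main obstacle, and the step I would verify most carefully, is the confluence/associativity check for the product in Construction~\ref{stdG0}. Classically this is the diamond lemma for Clifford algebras; in the systemic setting one must track scrupulously when an equality can be relaxed to $\preceq$ without running afoul of the system axiom (v), since any spurious $\preceq$-relation among distinct ordered tangibles would destroy unique negation. The $\preceq$-bilinearity of $\Bb$ (rather than strict bilinearity) complicates the bookkeeping, and is the reason the construction must place the tensors $v\otimes v$ and $q(v)$ on opposite sides of $\preceq$ rather than treating them as equal.
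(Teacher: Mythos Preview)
Your proposal is correct and follows essentially the same approach as the paper, but is considerably more detailed than what the paper actually provides. The paper's proof is only a few lines: it verifies the anticommutation axiom \eqref{wn} on basis elements via $x^i\otimes x^j + x^j\otimes x^i = \Bb(x^i,x^j) + (x^i\otimes x^j)^\circ \succeq \Bb(x^i,x^j)$ and then disposes of associativity with a single sentence, ``by induction on length, we appeal to \eqref{Cliff07}.'' It does not spell out the system axioms or the construction and injectivity of $\Phi$ at all.

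Your diamond-lemma formulation of the associativity check is the right way to make the paper's inductive appeal rigorous, and your treatment of the system structure via Lemma~\ref{makesur} and of $\Phi$ fills in material the paper simply omits. So there is no divergence in strategy---you are expanding the paper's sketch rather than taking an alternative route.
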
\proof We start by verifying the conditions of
Definition~\ref{Cliff1}.

 $x^i \otimes x^j + x^j \otimes x^i = \Bb(x^i,x^j) + (x^i
\otimes x^j)^\circ \succeq \Bb(x^i,x^j)$ for $i<j$, so we take sums.

To prove associativity of multiplication, we need only check tensors
of basis elements, and thus, by induction on length, we appeal to
\eqref{Cliff07}. \qed

 Unfortunately this construction is not finitely spanned over $\Acal$, since
 the tensor powers of $x^i$ are independent. We can reduce $x^i \otimes x^i$ to $q(x^i)$, at the cost of losing associativity.
Accordingly, we  weaken associativity.
\begin{defn}  A nonassociative semialgebra $\mathcal C$ with a surpassing relation $(\preceq)$ is $\preceq$-\textbf{associative} if $a_1(a_2a_3) +
(a_1 a_2)a_3 \succeq \zero,$ and $\mathcal C$ is called a
$\preceq$-\textbf{semialgebra}.
\end{defn}

\begin{constr}\label{stdGN} The \textbf{reduced   Clifford
$\preceq$-semialgebra with negation map} $\mathcal C_\preceq
(q,B,V,(-))$ is defined as follows:

Define the
  tensor module~$T_{>}(\V )$ of a free
${\mathcal A}$-module $\V $, as the submodule of $T(\V )$ spanned by
$\{ x^{i_1} \otimes \dots \otimes x^{i_k}: k \in \Net , \   : i_1 <
i_2 < \dots < i_k  \}$, and take $$T := \Acal \oplus T_{>}(\V )
\oplus (-)T_{>}(\V )$$ (where $(-)T_{>}(\V )$ is another copy of
$T_{>}(\V )$) under multiplication \be\label{Cliff04} x^i \otimes
x^i = q(x^i),\qquad x^j x^i = \Bb(x^i,x^j) + ((-) x^i \otimes x^j), \
\forall j>i,\ee and inductively
 \be\label{Cliff7}(x^{i_1} \otimes \dots \otimes x^{i_k})\otimes x^j =
 \begin{cases}
 x^{i_1} \otimes \dots \otimes x^{i_k}\otimes x^j , \quad k<j \\
q(x^j) x^{i_1} \otimes \dots \otimes x^{i_{k-1}}, \quad k=j \\
x^{i_1} \otimes \dots \otimes x^{i_{k-1}}\Bb(x^k,x^j) + ((-) x^{i_1}
\otimes \dots \otimes x^{i_{k-1}}  \otimes x^j \otimes x^{i_k} ),\\
\quad k>j ,
 \end{cases}\ee
 where $(-)(-)x^i = x^i,$
extended distributively.
\end{constr}

\begin{rem} Associativity in the reduced   Clifford
$\preceq$-semialgebra with negation map fails in general since
$$ x^2 \otimes (x^2 \otimes x^1) = x^2 \otimes (\Bb (x^1 ,x^2) (-)x^1 \otimes
x^2) = q(x^2) x^1 + \Bb (x^1 ,x^2) (x^2)^\circ$$ whereas $(x^2
\otimes x^2) \otimes x^1 =  q(x^2) x^1.$ Likewise
$$ (x^2 \otimes x^1 )\otimes x^1 = \Bb (x^1 ,x^2) (-)x^1 \otimes
x^2) \otimes x^1 = q(x^1) x^2 + (\Bb (x^1 ,x^2) (x^2)^\circ$$
whereas $x^2 \otimes (x^1 \otimes x^1) = q(x^1)x^2.$
\end{rem}

Fortunately in our application of representing the exterior algebra,
we do have associativity since $\Bb$ is zero where this difficulty
would arise.

\begin{theorem}  $\mathcal C_\preceq(q,B,V,(-))$ is a Clifford $\preceq$-semialgebra.
 $$(\mathcal C_\preceq(q,B,V,(-)),\tT_{\mathcal
C(q,B,V,(-))},(-),\preceq)$$ is a
  system,  taking
$\tT_{\mathcal C(q,B,V,(-))}$ to be the simple tensors and their
negations,    \end{theorem}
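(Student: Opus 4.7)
The plan is to verify, in order, (a) the Clifford $\preceq$-relations \eqref{sq} and \eqref{wn}; (b) $\preceq$-associativity of the product; and (c) the system axioms of Definition~\ref{sursys0} together with Conditions~\ref{condi1}. Step (a) is essentially immediate from the definitions \eqref{Cliff04}: on basis vectors we have $x^i \otimes x^i = q(x^i)$, giving equality (hence $\succeq$), and $x^i x^j + x^j x^i = \Bb(x^i,x^j) + (x^i \otimes x^j)^{\circ} \succeq \Bb(x^i,x^j)$ for $i<j$. Extending via the $\preceq$-bilinearity of $\Bb$ (Definition~\ref{bilf}) and the $\preceq$-quadratic condition \eqref{eq:I.1.11}--\eqref{eq:I.1.27}, the analogous relations $v^2 \succeq q(v)$ and $v_1 v_2 + v_2 v_1 \succeq \Bb(v_1,v_2)$ follow for arbitrary $v, v_i \in V$, with the cross-terms contributing only quasi-zeros when we pass from equality to $\succeq$.

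For step (b), the Remark preceding the theorem already documents exactly the kind of discrepancy that occurs, namely a quasi-zero correction of the form $\Bb(x^i,x^j)(x^k)^{\circ}$. I would prove $\preceq$-associativity (i.e., $a_1(a_2 a_3) + (a_1 a_2) a_3 \succeq \zero$ in the sense that the two products differ by a sum of quasi-zeros) by induction on the combined degree of simple tensors $a_1, a_2, a_3$, using \eqref{Cliff7} to reduce to two elementary reorderings. The only non-trivial case is when inserting $x^j$ past an equal factor $x^{i_k} = x^j$ and subsequently past strictly smaller factors; the $k=j$ branch replaces a pair by $q(x^j)$, and comparing the two orders of contraction gives equal leading terms plus quasi-zero bilinear corrections. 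The verification is in essence the same computation as in the Remark, performed symbolically.

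For step (c), the negation map on $\mathcal C_{\preceq}$ is the obvious swap between $T_>(V)$ and its second copy, combined with the negation on $\Acal$, extended distributively; one checks $(-)^2 = \id$ and $(-)(ab) = ((-)a)b = a((-)b)$ by inspection of \eqref{Cliff04}--\eqref{Cliff7}. The tangible set $\tT_{\mathcal C(q,B,V,(-))}$ of simple tensors (with strictly increasing indices) and their negations is closed under $(-)$ by construction, and together with $\zero$ generates $\mathcal C_{\preceq}$ additively since $T = \Acal \oplus T_>(V) \oplus (-)T_>(V)$ is spanned by simple tensors. Since $T_>(V)$ and $(-)T_>(V)$ are independent free summands, a quasi-zero requires both sign components to appear non-trivially, so $\tT \cap \mathcal{C}^{\circ} = \emptyset$ and unique negation holds. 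The surpassing relation $\preceq_{\circ}$ satisfies Conditions~\ref{condi} and \ref{condi1} by Lemma~\ref{makesur}(ii).

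The main obstacle is step (b): the inductive bookkeeping in \eqref{Cliff7} has multiple cases (the strict inequalities $k<j$, $k>j$, and the collapsing case $k=j$), and one must check that every reshuffling introduces only $\Bb(\cdot,\cdot)$-weighted quasi-zero corrections, never genuine non-quasi-zero discrepancies. As the authors note, full associativity genuinely fails in the presence of repeated indices, so the argument cannot cheat by treating $\mathcal C_{\preceq}$ as an ordinary algebra; one must commit to the $\preceq$-framework from the start.
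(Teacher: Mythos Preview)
Your proposal is correct and follows essentially the same approach as the paper: verify the Clifford relations \eqref{sq}--\eqref{wn} on basis vectors (the paper writes only $x^i\otimes x^j + x^j\otimes x^i = \Bb(x^i,x^j) + (x^i\otimes x^j)^\circ \succeq \Bb(x^i,x^j)$ and then ``take sums''), and handle $\preceq$-associativity by induction on the length of simple tensors via \eqref{Cliff7}. Your step (c) on the system axioms is actually more thorough than the paper, which omits that verification entirely.
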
\proof We start by verifying the
conditions of Definition~\ref{Cliff1}.
  $$x^i \otimes x^j + x^j \otimes x^i = \Bb(v,w) + (x^i
\otimes x^j)^\circ \succeq \Bb(v,w)$$ for $i<j$, so we take sums.

To prove $\preceq$-associativity of multiplication, we need only
check tensors of basis elements, and thus, by induction on length, we
appeal to \eqref{Cliff7}. \qed

 Since the indices increase, there must be at most
$n$ of them in a product of basis elements. Thus the dimension is
$2^{n +1} $ rather than  $2^n$ in the classical case, because we
need to count negatives as well. Note that
 $\widetilde{\mathcal C}(q,B,V)\to {\mathcal C}_\preceq(q,B,V,(-))$ is no longer an injection
 since $x^i \otimes x^i $ and $ q(x^i )$ have the same image.

\begin{rem}
In summary, the standard Clifford algebra is associative but lacks
the important classical property of finite generation. The reduced
standard Clifford algebra seems more in line with the spirit of this
paper, replacing equality by $\preceq$ in the key property of
associativity.

When $\mathcal{A}$ is taken to be a field, the formal negation map
coincides with the negation in the field, and thus both the standard
and the reduced Clifford algebras boil down to the classical
Clifford algebra of a quadratic form over a field.
\end{rem}

\begin{example}[The standard  reduced Clifford $\preceq$-semialgebra with $(-)$ for
$n=2$]\label{std00} We consider the base
$\{(\pm)1,\ x:= x^1, \ (-)x,\ y: = x^2,\ (-)y\}$ over $\Acal,$ and
given a quadratic pair $(q,\Bb)$,    put $\gamma = \Bb (x,y).$ Then
$x\cdot x = q(x),$ $y \cdot y = q(y),$ and $y\cdot x = (-)x\cdot y +
\gamma.$
\end{example}

%
%
%
%
%
%
%

We turn to involutions.
\begin{defn}  A $\preceq$-\textbf{involution} $(\sigma)$ of a  $\preceq$-
system  $\mathcal A $ is a $\preceq$-antihomomorphism of
order $\le 2,$ in the sense that  $(a^\sigma)^ \sigma \succeq a$;
\be \label{inv0} (a_0 a_1)^\sigma \preceq a_1^\sigma
a_0^\sigma,\quad \forall a,a_i \in \tT.\ee
\end{defn}

\begin{lem}   The standard    Clifford
$\preceq$-semialgebra with negation map $\mathcal C(q,B,V,(-))$ has
an involution $\sigma$ given by the identity on $\Acal$ and
$v^\sigma = (-)v$ and
$$(v_1 \otimes v_2 \otimes\dots \otimes v_k)^\sigma = (-)^k v_k
\otimes \dots \otimes v_2 \otimes v_1  .$$\end{lem}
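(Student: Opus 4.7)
The plan is to verify the three conditions for $\sigma$ to be a $\preceq$-involution — namely $\sigma^2 \succeq \id$, the anti-homomorphism property $(a_0 a_1)^\sigma \preceq a_1^\sigma a_0^\sigma$, and well-definedness on $\mathcal C(q,B,V,(-))$ — by first establishing the corresponding statements on the free tensor module $T(V)$ and then passing to the quotient defining $\mathcal C$.

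For $\sigma^2$: applying the formula twice to a length-$k$ simple tensor produces the factor $(-)^{2k} = \id$ and reverses the order twice, returning the original tensor. This gives $\sigma^2 = \id$ on simple tensors (including scalars, where the formula is the identity, and length-one tensors, where $((-)v)^\sigma = (-)(-)v = v$), and by $\Acal$-linearity on all of $\mathcal C$; in particular $\sigma^2 \succeq \id$. For the anti-homomorphism property, I would compute on simple tensors $u = v_1 \otimes \dots \otimes v_j$ and $w = w_1 \otimes \dots \otimes w_k$:
\[
(u \otimes w)^\sigma = (-)^{j+k}\, w_k \otimes \dots \otimes w_1 \otimes v_j \otimes \dots \otimes v_1 = w^\sigma \otimes u^\sigma,
\]
where the second equality pulls the sign out factor by factor using $((-)a)b = (-)(ab) = a((-)b)$ from \eqref{eq:I.1.110}. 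Extending $\Acal$-linearly, $\sigma$ is a strict anti-automorphism of $T(V)$, so in particular $(a_0 a_1)^\sigma \preceq a_1^\sigma a_0^\sigma$ in the quotient.

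Well-definedness on $\mathcal C$ is the remaining step. By Lemma~\ref{makesur}, the surpassing relation on $\mathcal C$ is generated by the pairs in $S_0$ (together with the quasi-zero shifts), so it suffices to show $\sigma$ sends these generators into $\preceq$. The pair $(q(v), v \otimes v)$ is fixed because $\sigma$ acts as the identity on $\Acal$ and $(v\otimes v)^\sigma = (-)^2 v \otimes v = v \otimes v$. Similarly $(\Bb(v_1,v_2), v_1 \otimes v_2 + v_2 \otimes v_1)$ is fixed, since the sum on the right is $\sigma$-invariant as a set, and quasi-zeros $c^\circ = c(-)c$ go to $c^\sigma (-) c^\sigma$, again a quasi-zero. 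Hence $\sigma$ descends.

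The main obstacle will be reconciling the abstract descent argument with the explicit reduction rule~\eqref{Cliff07}, which places products into the canonical basis with ascending indices. Since $(x^{i_1} \otimes \dots \otimes x^{i_k})^\sigma = (-)^k\, x^{i_k} \otimes \dots \otimes x^{i_1}$ is in descending order, rewriting it in the canonical basis requires iterated application of~\eqref{Cliff07}, and one must check that the resulting reduction agrees with the direct Clifford product $x^{i_k} \cdot x^{i_{k-1}} \cdots x^{i_1}$ up to $\preceq$. This compatibility is precisely the content of the anti-homomorphism property, and it becomes automatic once the descent of $\sigma$ from $T(V)$ to $\mathcal C$ is established.
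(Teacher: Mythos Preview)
Your descent argument targets the wrong object. The semialgebra $\mathcal C(q,B,V,(-))$ of Construction~\ref{stdG0} is \emph{not} defined as a quotient of $T(V)$ modulo the relation generated by $S_0$; that is Construction~\ref{stdG}, which produces the different object $\widetilde{\mathcal C}(q,B,V)$. In Construction~\ref{stdG0} the underlying module is $\Acal \oplus T_{\ge 0}(V) \oplus (-)T_{\ge 0}(V)$, with the negation map given by the switch between the two copies and the product defined \emph{directly} by the inductive rule~\eqref{Cliff07}. There is no congruence to check and no Lemma~\ref{makesur}-style descent available; your ``well-definedness'' paragraph simply does not apply here.

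Consequently the step you flag as ``the main obstacle'' is the entire content of the lemma, and it is not automatic. One must verify, using \eqref{Cliff07} itself, that $\bigl((x^{i_1}\otimes\dots\otimes x^{i_k})\otimes x^j\bigr)^\sigma$ agrees (up to $\preceq$) with $(x^j)^\sigma\otimes (x^{i_1}\otimes\dots\otimes x^{i_k})^\sigma$; this splits into the three cases $j>i_k$, $j=i_k$, $j<i_k$, and the last case genuinely produces the $\Bb(x^j,x^{i_k})$ term from the reduction rule, which your free-tensor computation never sees. The paper's proof is exactly this case analysis. Your anti-homomorphism identity on $T(V)$ is correct but irrelevant, since the product in $\mathcal C(q,B,V,(-))$ is not the tensor-algebra juxtaposition.
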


\begin{proof} It is enough to check \eqref{inv0}  for tensors of
basis elements.  For $j>i_k,$ \be\begin{aligned}((x^{i_1}& \otimes
\dots \otimes x^{i_k})\otimes x^j)^\sg = (-)^k x^{i_k} \otimes \dots
\otimes x^{i_2} \otimes x^{i_1} \\& = (-)^k x^{i_k} \otimes
((-)^{k-1}x^{i_{k-1}}\dots \otimes x^{i_2} \otimes x^{i_1}) =
{x^j}^\sg \otimes (x^{i_1} \otimes \dots \otimes
x^{i_k})^\sg.\end{aligned}
\end{equation} For $j=k,$
$$((x^{i_1} \otimes \dots \otimes
x^{i_k})\otimes x^{i_k})^\sg = q(x^{i_k}) (x^{i_1} \otimes \dots
\otimes x^{i_{k-1}})^\sg = {x^k}^\sg \otimes (x^{i_1} \otimes \dots
\otimes x^{i_k})^\sg.$$
 For $j<k,$ \be\begin{aligned}((x^{i_1}& \otimes \dots
\otimes x^{i_k})\otimes x^j)^\sg = ((x^{i_1} \otimes \dots \otimes
x^{i_{k-1}})\otimes (\Bb (x^j, x^{i_k})(-)(x^j \otimes x^{i_k})^\sg
\\& =
(-)^{i_{k-1}} \Bb (x^j, x^{i_k})x^{i_{k-1}} \otimes \dots \otimes
x^2 \otimes x^1  + (-)^{k+1} x^{i_k} \otimes  x^j \otimes \dots
\otimes x^2 \otimes x^1
\\& =
(-)^{k-1} \Bb (x^j, x^{i_k})^\circ x^{i_{k-1}} \otimes \dots \otimes
x^{i_2} \otimes x^{i_1}  + (-)^{k+1} x^{i_k} \otimes  x^j \otimes
\dots \otimes x^{i_2} \otimes x^{i_1} \\& =  (x^j)^\sg (x^{i_1}
\otimes \dots \otimes x^{i_k})^\sg.\end{aligned}
\end{equation}
\end{proof}

\begin{example} The involution for Example~\ref{std00} is given by
$x^\sg= (-)x,$  $y^\sg =(-)y,$ $(xy)^\sg = yx = (-)x y + \gamma,$
$(yx)^\sg = xy.$ Thus  $$(xy)^\sg  = ((-)x y + \gamma)^\sg = x y\,
(-) \gamma+ \gamma \succeq xy$$ and $$((xy)y)^\sg = q(y) x^\sg =
x^\sg q(y) \preceq xy^2 +(\gamma y)^\circ = y(xy) -\gamma y) = y^\sg
(xy)^\sg
$$
$$((xy)x)^\sg = ((-)x^2 y +\gamma x)^\sg = q(x)y  +\gamma x
=  x(xy) -\gamma x = x^\sg (xy)^\sg
$$
\end{example}

\section{The dual space and the Clifford semialgebra}\label{sec:sec6}

We  bring in our major use of the Clifford semialgebra, removing
$\otimes$ from the notation. For simplicity, we continue to deal
with the free $\Acal$-module $\V=\Acal[x]$ of infinite countable
rank with base $ \bfx:=(x^0,x^1,x^2,\ldots)$, the power of the
indeterminate $x$.
\begin{defn}
Suppose we are given  an $(\Acal,\preceq)$-bilinear form $B$ and a
module $ \V = ( \V, \tT_{\V }, (-)))$ over $\Acal.$ Define $
\d^j:V\to \Acal$  by $\d^j(v):=B(u,x^j)$ for all $v\in V$. In other
words, the restricted dual $V^*$ will be the $\Acal$-span of the
$B$-dual basis $\d^j$ such that $\d^j(x^i)=B^{ij}=B(x^i,x^j)$.
Symbolically we could also denote $V^*$ by~$\Acal[\d]$.  If
$B(x^i,x^j)= \delta_{ij}$ then $\d^j$ is the element of the usual
dual base.  Write $\tT _{\V  }^*$  for   $\{ f|_{\tT_{\V }} : f \in
\V  ^*$ with $f(\tT_{\V }) \subseteq \tT_{\mathcal A}\}.$\end{defn}

Define the set $\Wcal(\bfx,\partial)=\Wcal(\bfx)\cup\Wcal(\partial)$ of
all the words in the union of the two infinite alphabets
$\Wcal(\bfx)=\{x^0, x^1, x^2,\}$ and
$\Wcal(\partial)=\{\partial^0,\partial^1,\partial^2,\ldots, \}$.
Recall that if $v_1,v_2\in V$ and $w\in \tbw^nV$, then
$$
(-)v_1\w (v_2\w w)=v_2\w v_1\w w,
$$
 which defines a negation map on $\tilde V$ compatible with that on $\tbw^{\geq 2}V$.

Denote by
$\Ccal= \Ccal(V)$ the set of formal finite linear combinations
of words of $\Wcal(\bfx,(-)\bfx, \d, (-)\partial)$ with $1_\Acal$ adjoined, with
respect to the juxtaposition product extended distributively, and
with relations given  (for $i\ge j$) by
\be
x^ix^j  = (-)x^jx^i, \qquad \partial^i\partial^j =
(-)\partial^j\partial^i, \qquad \partial^j x^i= \Bb(x^i,x^j)
 (-) x^i\partial^j.\label{eq:cacr}
\ee

Define $\Ccal:=\Ccal(V)$ as being the $\Acal$-algebra generated by
$1_{\Acal}\cup \Ccal_+\cup\Ccal_-$, where $\Ccal_+$ and $\Ccal_-$
are graded:
$$
\Ccal_+=C_1\oplus_{h\geq 1}\Acal\cdot \underbrace{x^{i_1}\cdots x^{i_h}}_{words\,\, of\,\, length\,\, h}
$$
where
$$
\Ccal_1= \bigoplus_{j\geq 0}\Acal x^j\oplus \bigoplus_{j\geq
0}\Acal((-)x^j)
$$
and

$$
\Ccal_-=C_{-1}\oplus_{h\geq 1}\Acal\cdot \underbrace{\d^{i_1}\cdots \d^{i_h}}_{words\,\, of\,\, length\,\, h}
$$
where
$$
\Ccal_{-1}= \bigoplus_{j\geq 0}\Acal\cdot \d^j\oplus
\bigoplus_{j\geq 0}\Acal((-)\d^j),
$$
 defining
$$
(-)\d^j(x^hx^k)=\d^j((-)x^hx^k)=\d^j(x^kx^h).
$$
If $u(x) = \sum_i a_ix^i \in V=\Acal[x]$ we define its $B$-dual over
$V^*$ as $u(\d): = \sum_i a_i \d^i$, i.e.
$u(\d)(v(x))=B(u(x),v(x))$. We make $\Ccal$ into a semialgebra over
$\Acal$ with respect to the juxtaposition product, by imposing the
following commutation rules on the elements of degree $\pm 1$:
\begin{eqnarray}
u(x)v(x)&+&v(x)u(x)\succeq 0,\cr\cr u(\d)v(\d)&+&v(\d)u(\d)\succeq
0,\cr\cr u(\d)\cdots v(x)&\succeq&  u(\d)(v(x)) (-) v(x)u(\d) ,\label{eq:comm3}
\end{eqnarray}
which basically descend from the fact that we are  requiring the elements of $V^*$ acting  on
$\Ccal$ as skew-derivations, namely \be u(\d)(v(x)w(x))=(u(\d)v(x))w(x)(-)v(x)u(\d)w(x)\label{eq:der}. \ee To
be more precise, equality (\ref{eq:der}) should be understood in the
sense of operators, i.e. for all $f\in \Ccal_{1}$:
$$
(u(\d)v(x)w(x)(-)v(x)u(\d)(w(x)))f=u(\d)(v(x))f+u(\d)(w(x))fv(x).
$$
Since each element of $\Ccal$ is a finite linear combination of
words of elements of degree $1$, the above suffices for the
definition of
the commutation relations on $\Ccal$.
\begin{example}
 This example serves to motivate commutations (\ref{eq:comm3}). Let $\tbw V$ be the exterior semi algebra. Any element of degree $\geq 2$ is a sum of elements of the form $w:=w_1\w w_2$ with $w_1,w_2\in  V$  (possibly equal to $1$).  Now $u(\d)$ acts on $w:=w_1\w w_2$ as
\begin{eqnarray*}
 u(\d)(w_1\w w_2)&=&u(\d)(w_1)\w w_2(-)w_1\w u(\d)(w_2)
\end{eqnarray*}
which means that
$$
  u(\d)(w_1\w w_2)\w w_3=u(\d)(w_1)(w_2\w w_3)+  u(\d)(w_2) w_1\w w_3.
$$
This  motivates our requirement (\ref{eq:der}). Furthermore a simple computation give:
$$
(u(\d)v(x)+v(x)u(\d))w\succeq B(u(x),v(x))w
$$
i.e.
$$
u(\d)v(x)+v(x)u(\d)\succeq B(u(x),v(x))
$$
or
$$
u(\d)v(x)\succeq B(u(x),v(x))(-) v(x)u(\d)
$$
whence the  commutation relation in the Clifford semialgebra
$$
u(\d)v(x)=u(\d)(-)v(x)u(\d)
$$
\end{example}

\claim{} For notational simplicity write $u^*:=u(\d)\in V^*$ and $v:=v(x)\in
V$. We also denote by ``$\cdot$'' the product of $\Ccal$ to avoid
potential confusions.  The product $u^*v$ in $\Ccal$ defines a linear
map $\Ccal\sra \Ccal$ given by \be (u^*\cdot v)w=u^*(v)(w)=u^*(vw).
\ee Then
\begin{eqnarray*}
(u^*\cdot v)\cdot w&=&u^*(v)\cdot w(-)u^*(w)v
\cr\cr
\end{eqnarray*}
which means that
$$
[(u^*\cdot v)\cdot w]z=u^*(v)w\cdot z+u^*(w)z\cdot v.
$$
Notice that by construction the product in $\Ccal$ is associative
because we are composing operators, and the  composition of
operators is associative.


 \begin{rem}{\bf {(Negation on $\Acal$)}
 .}\label{Cliffext}   We endow $\Acal$ with a
negation map, by setting $((-)1_\Acal) u=(-u)$ in the
 sense of operators, namely $((-)u)v=vu$.
 Write $\widetilde{\Acal} = \Acal \oplus ((-)\Acal.$
$\widetilde{\Acal}$ has a negation map $(-)$ given by the switch.
Likewise, take a copy $(-)V$ of $V$, and view $\widetilde{V}:= V
\oplus((-)V)$ as an $\widetilde{\Acal}$-module via
$$(a_1,a_2)(v_1,v_2) = (a_1v_1+a_2v_2, a_2v_1+a_1v_2),$$
and $(-)(v_1,v_2):= (v_2,v_1).$
\end{rem}

 We take the
dual $\widetilde{V}^*$ of $\widetilde{V}$ which can be thought of as
pairs $w^*:=(w_1^*,w_2^*),$ where for $v=(v_1,v_2)\in \widetilde{V},$
$$w^*(v) = (w_1^*(v_1)+w_2^*(v_2),w_1^*(v_2)+w_2^*(v_1)).$$
Then $((-)w)^*(v) = (w_1^*(v_2)+w_2^*(v_1),w_1^*(v_1)+w_2^*(v_2)).$

 Notice that
$$
(-)u\cdot v=v\cdot u=(-)(-)v\cdot u=u\cdot (-)v
$$
so that the relations $(-)1_\Acal(u\cdot v)=(-)u\cdot v=u\cdot(-)v$
hold. We extend the bilinear form $B$ to $V \oplus (-)V,$ by putting
$$B( (x^i,(-)x^{i'}) ,(x^j,x^{j'})) = (B(x^i,x^j)+B(x^{i'},x^{j'})(-)(B(x^i,x^{j'})+B(x^{i'},x^j))$$ Now we define
$(-)B(x^i,x^j) :=B((-)x^i,x^j),$ and our negation map also induces a
negation map on $V^*$ in $\Ccal$, given by $(-)(x^i)^*(x^j) = (-)
\Bb(x^i,x^j)=B((-)x^i,x^j).$

\begin{definition} An element of $\Ccal$ is in \textbf{normal form} if it is a finite linear combination of words $w_1w_2$ such that $w_1\in \Wcal(\bfx)$ and $w_2\in\Wcal(\bm\partial)$.
\end{definition}
\begin{example} Here are a couple of examples of the basic yoga to put products in normal form using the appropriate commutation relations.
\begin{enumerate}
\item To put $u_1^*u_2^*v$ in normal form we exploit the fact that the elements of $V^*$ act as derivations on $\tbw V$. One has
\begin{eqnarray*}
u_1^*\cdot u_2^*\cdot v&=&u_1^*\cdot(u_2^*\cdot v)=u_1^*\big(B(u_2,v)+((-)v)\cdot u_2^*\big)\cr
&=&B(u_2,v)u_1^*+B(u_1, (-)v)u_2^*+v\cdot u_1^*\cdot u_2^*
\end{eqnarray*}
where we agree that $B(u, (-v))=(-)B(u,v)$, acting on the product $w-1\cdot w_2$ as
$(-)B(u,v)w_1w_2=B(u,v)w_2\cdot w_1$

\item  As a second example we propose the usual expression one may happen to deal with, such as the monomial $
x^i\partial^jx^k$. To put it in normal form:
$$
x^i\partial^jx^k=x^i(B_{jk}\, (-) \,x^k\partial^j)=B_{jk}x^i(-) \,x^k\partial^j
$$
where $B_{ij}:=B(x^i,x^j)$.
\end{enumerate}
\end{example}

\begin{proposition}
All elements  of $\Ccal$ can be put into normal form.
\end{proposition}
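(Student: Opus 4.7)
The plan is to induct on a natural complexity measure that quantifies how far a word is from normal form. For a word $w = z_1 z_2 \cdots z_n$ built from the letters in $\Wcal(\bfx)\cup(-)\Wcal(\bfx)\cup\Wcal(\partial)\cup(-)\Wcal(\partial)$, define the \emph{disorder}
$$d(w) := \#\bigl\{(p,q) : 1 \le p < q \le n,\ z_p \text{ is of $\partial$-type and } z_q \text{ is of $x$-type}\bigr\}.$$
Since every element of $\Ccal$ is, by construction, a finite $\Acal$-linear combination of such monomial words and the juxtaposition product is associative, it suffices to reduce one monomial word at a time; the general statement then follows by linearity.

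I would proceed by strong induction on $d(w)$. The base case $d(w)=0$ is immediate, as by definition all $x$-type letters then precede all $\partial$-type letters, so $w$ is already in normal form. For the inductive step, assume $d(w) > 0$. A short combinatorial argument shows that there is an index $p$ with $z_p$ of $\partial$-type and $z_{p+1}$ of $x$-type: pick $q$ minimal among those indices of $x$-type letters that admit some $\partial$-type letter to their left, and observe that by minimality $z_{q-1}$ cannot itself be $x$-type, hence must be $\partial$-type; then set $p = q-1$. Apply the commutation relation \eqref{eq:cacr} at this adjacent pair $\partial^j x^i$ to rewrite
$$w \;=\; \Bb(x^i,x^j)\,w_1 \;(-)\; w_2,$$
where $w_1$ is obtained from $w$ by deleting $z_p$ and $z_{p+1}$, and $w_2$ is obtained from $w$ by transposing them. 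Then $w_1$ is shorter and inherits only those inversions of $w$ not involving the removed letters, so $d(w_1) < d(w)$; and $w_2$ has $d(w_2) = d(w) - 1$, since swapping an adjacent pair destroys precisely the inversion $(p,p+1)$ while preserving the relative order of every other pair. The inductive hypothesis then reduces each of $w_1$ and $w_2$ to normal form.

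The main subtlety I anticipate is the bookkeeping of signs. Coefficients $\Bb(x^i,x^j)$ land in $\Acal$, while the $(-)$ produced by the commutation rule contributes an element of $(-)\Acal$; both are absorbed as coefficients in the resulting $\Acal$-linear combination, using Remark~\ref{Cliffext}. The other two commutation laws $x^i x^j = (-)x^j x^i$ and $\partial^i\partial^j = (-)\partial^j\partial^i$ preserve $d(w)$, so they may be freely invoked within the resulting normal-form words without jeopardizing the induction. This is precisely the reason for choosing $d(w)$ to count only mixed inversions of type $(\partial, x)$: any finer measure that also tracks orderings among the $x$'s or among the $\partial$'s could fail to decrease under these anticommutations and would threaten termination. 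With this choice, the induction closes and every element of $\Ccal$ reaches normal form.
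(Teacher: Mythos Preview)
Your proof is correct and is precisely the ``routine induction'' the paper alludes to; the paper's own proof consists of the single sentence ``A matter of a routine induction,'' so you have simply filled in the details. Your choice of the disorder measure $d(w)$ counting only mixed $(\partial,x)$ inversions is exactly the right one, and your verification that both $w_1$ and $w_2$ have strictly smaller disorder is clean.
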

\proof A matter of a routine  induction.\qed

We do not claim that the normal form is unique, but we do have
uniqueness up to $\circ$: If $\sum a_i w_{1,i}w_{2,i} =  a_i'
w_{1,i}w_{2,i}$ for $a_i, a_i' \in \Acal$ then each $a_i (-) a_i'
\in \Acal^\circ.$

 Next, we apply
some material from \cite[\S 9]{JuR1} to the Clifford semialgebra.

 \begin{rem}\label{dua} $(V_{\Bb}^*,\tT_V^*,\preceq)$ is a systemic module, where we define
 $v_1^* \preceq v_2^*$ if $v_1^*(w) \preceq v_2^*(w)$ for every $w\in V.$ \end{rem}

\claim{\bf The action  on $\tbw\V $ (and on $\overline{\bw\V
}$)}\label{sec:sec2}

 We have the following:


%
\begin{definition}\label{def:conr}  Define   a left  action $\, \lrcorner \,$ (possibly depending on the choice of the $\preceq$ bilinear form $B$) of
$\V _n^*$ on $\bw\V _n$ by:
$$
\d\, \lrcorner \, v=\d(v),\qquad\qquad \forall (\d,v)\in\V
_n^*\times\V _n,
$$
and for $(u_1,\ldots,u_k)\in \V _n^k$, \be \d\, \lrcorner \,
(u_1\w\cdots\w u_k)=\begin{vmatrix}\d(u_1)&\d(u_2)&\ldots&\d(u_k)\cr
u_1&u_2&\ldots&u_k\end{vmatrix}\in \widetilde{\bw}_\Acal^{k-1}\V
_n,\label{eq:contdiag} \ee where the expression in the RHS of
\eqref{eq:contdiag} means the linear combination
 $$\sum_j (-)^j \d(u_j) u_1\w\cdots\w u_{j-1} \w u_{j+1}\w\cdots\w u_k \in \widetilde{\bw}_\Acal^{k-1}\V _n.$$
\end{definition}

The cases $k=2$ and $3$ merit  explicit descriptions.
\begin{example}
 Let $u\w v\in
\bw^2\V _n$ and $\d\in\V _n^*$. Then
$$
\d\, \lrcorner \, (u\w v)=\begin{vmatrix}\d(u)&\d(v)\cr u&
v\end{vmatrix}=\d(u)v(-)\d(v)u\in \tbw^1_\Acal\V _n,
$$
where
$$
(\d(u)v(-)\d(v)u)(w\w w_1)=\d(u)\cdot v\w w\w w_1+\d(v)\cdot w\w u\w
w_1\qquad (w,w_1)\in \V _n\times \bw\V _n.
$$
Similarly:
$$
\d\, \lrcorner \, (u\w v\w w)=\begin{vmatrix}\d(u)&\d(v)&\d(w)\cr
u&v&w\end{vmatrix}=\d(u)v\w w(-)\d(v)w\w u+\d(w)u\w v.
$$
\end{example}

\begin{lemma} For all $\d\in \V _n^*$, and $v_1,v_2,\ldots, v_k$ ($k\geq 2$),
denote
$$
\begin{vmatrix}\d(\bfv)\cr \bfv\end{vmatrix}:=\begin{vmatrix}\d(v_1)&\cdots&\d(v_k)\cr v_1&\ldots&v_k\end{vmatrix}
$$
Then
$$
\d \, \lrcorner \, (u\w v_1\w\cdots\w
v_k)=\begin{vmatrix}\d(u)&\d(v_1)&\cdots&\d(v_k)\cr
u&v_1&\cdots&v_k\end{vmatrix}=\d(u)v_1\w\cdots\w v_k\ (-)\  u\w
\begin{vmatrix}\d(\bfv)\cr \bfv\end{vmatrix}.
$$
\end{lemma}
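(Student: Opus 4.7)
The plan is to derive the identity as a Laplace-type expansion of the ``determinant'' along its first column, using only Definition~\ref{def:conr} and the compatibility of the negation map with wedging. First I would apply the defining formula of $\d\,\lrcorner\,$ to the $(k+1)$-fold wedge $u\w v_1\w\cdots\w v_k$, treating $u$ as the first entry. This yields one term $\d(u)\, v_1\w\cdots\w v_k$, coming from the removal of $u$, plus $k$ further terms
\[
(-)^{j}\,\d(v_j)\, u\w v_1\w\cdots\w\widehat{v_j}\w\cdots\w v_k,\qquad j=1,\dots,k,
\]
where each position index is shifted by one relative to the position of $v_j$ in the pure $v$-wedge; the sign convention is the one fixed by the $k=2,3$ examples following Definition~\ref{def:conr}.

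Second, I would apply the same definition to $v_1\w\cdots\w v_k$ to obtain
\[
\begin{vmatrix}\d(\bfv)\cr \bfv\end{vmatrix}\;=\;\sum_{j=1}^{k}(-)^{j-1}\,\d(v_j)\, v_1\w\cdots\w\widehat{v_j}\w\cdots\w v_k,
\]
whose $j$-th summand differs in sign from the corresponding one in the first expansion by exactly one copy of $(-)$. Factoring $u\w(\,\cdot\,)$ out of the $k$-term sum, and using the compatibility $u\w((-)w) = (-)(u\w w)$ built into the construction of $\widetilde\bw\V$ in Section~\ref{sec:sec1} (cf.~Lemma~\ref{neg17}), together with $\Acal$-linearity, the sum collapses to $(-)\, u\w \begin{vmatrix}\d(\bfv)\cr \bfv\end{vmatrix}$. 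Combining this with the isolated $\d(u)\,v_1\w\cdots\w v_k$ term yields the claimed formula.

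The only delicate point is sign bookkeeping, since here $(-)$ denotes the ``switch first two entries'' negation map rather than classical additive inversion; the identity $u\w((-)w) = (-)(u\w w)$ is precisely what permits a $(-)$ to be pushed through a wedge on the left. Because every step uses only the definition of $\d\,\lrcorner\,$, $\Acal$-linearity, and this compatibility, the outcome is a genuine equality (not merely a $\preceq$-relation) in $\widetilde\bw\V_n$, so no weakening of $=$ to $\preceq$ is needed.
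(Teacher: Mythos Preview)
Your proof is correct and follows the same approach as the paper, which simply says ``By direct substitution.'' You have spelled out what that substitution entails: applying Definition~\ref{def:conr} to the $(k+1)$-fold wedge, separating the $\d(u)$ term from the remaining $k$ terms, and matching signs via the compatibility $u\w((-)w)=(-)(u\w w)$.
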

\proof By direct substitution. \qed

 The notation of \eqref{eq:contdiag} is useful in simplifying the
combinatorics.

\begin{proposition} For  $\d \in \V _n^*$, the map $\d\, \lrcorner \,: \tbw\V \sra \tbw\V $ is a $V$-skew derivation, i.e., for $u\in \tbw^1V$ and
$v\in\bw^kV$, \be\label{skew} \d\, \lrcorner \,(u\w v)=\d(u)v\ (-)\
u\w (\d\, \lrcorner \, v). \ee
\end{proposition}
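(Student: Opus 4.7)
The plan is to reduce the skew-derivation identity \eqref{skew} to the preceding lemma via $\Acal$-linearity, and to dispatch the low-degree cases $k \le 1$ separately.

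First I would observe that the determinant formula \eqref{eq:contdiag} is $\Acal$-linear in each column, so $\d \, \lrcorner\,$ extends $\Acal$-linearly from pure wedges to all of $\widetilde{\bw} \V$. Both sides of the asserted identity are then $\Acal$-linear in both $u$ and $v$, so it suffices to check the identity when $u \in V$ and $v = v_1 \w \cdots \w v_k$ is a pure $k$-wedge, for each $k \geq 0$.

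For $k \geq 2$ I would invoke the preceding lemma directly: its conclusion states
$$\d \, \lrcorner\, (u \w v_1 \w \cdots \w v_k) = \d(u)\, v_1 \w \cdots \w v_k \ (-)\ u \w \begin{vmatrix}\d(\bfv)\cr \bfv\end{vmatrix},$$
and by Definition~\ref{def:conr} the factor $\begin{vmatrix}\d(\bfv)\cr \bfv\end{vmatrix}$ is precisely $\d \,\lrcorner\, v$, yielding \eqref{skew}. The boundary cases are direct verifications. For $k = 0$ one has $v \in \Acal$, so $\d\,\lrcorner\, v = 0$ by convention and the claimed identity collapses to $\d\,\lrcorner\,(vu) = v\,\d(u) = \d(u)v$. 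For $k = 1$ with $v = v_1 \in V$, Definition~\ref{def:conr} gives $\d\,\lrcorner\,(u \w v_1) = \d(u) v_1\ (-)\ \d(v_1) u$, which matches $\d(u) v_1\ (-)\ u \w (\d\,\lrcorner\, v_1)$ since $\d(v_1) \in \Acal$ and so $u \w \d(v_1) = \d(v_1) u$.

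The main subtle point -- and the one I would state most carefully -- is the compatibility between the systemic negation $(-)$ on $\widetilde{\bw}\V$, which by Lemma~\ref{neg17} acts as the switch of the first two tensor factors of a wedge, and the sign appearing from expanding the $(k+1)\times 2$ determinant of \eqref{eq:contdiag} along its first column. This is exactly the compatibility that underlies the preceding lemma, where pulling the leading entry $\d(u) u$ out of the first column produces a residual $(k \times 2)$-minor whose ``sign'' is exactly the switch moving $u$ past the next wedge factor. Since this content is already encapsulated in the preceding lemma, the proof of the proposition is essentially a linearity plus bookkeeping argument, with no new combinatorial obstruction beyond interpreting the formal $(-)$ correctly within the extended exterior system.
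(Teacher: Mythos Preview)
Your proposal is correct and follows essentially the same route as the paper: both arguments reduce by $\Acal$-linearity to pure wedges, verify the case $k=1$ directly from the $2\times 2$ determinant in Definition~\ref{def:conr}, and for $k\geq 2$ invoke the preceding lemma to identify $\begin{vmatrix}\d(\bfv)\cr \bfv\end{vmatrix}$ with $\d\,\lrcorner\, v$. The paper phrases the $k\geq 2$ step as an induction, but the ``inductive step'' is nothing more than the lemma you cite; your version is slightly more explicit about the $k=0$ boundary and the role of the systemic negation, but there is no substantive difference.
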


\proof The map $\d$ is clearly $\Acal$-linear. On $u\w v\in \bw^2\V
_n$ one has:
$$
\d \, \lrcorner \, (u\w v)=\begin{vmatrix}\d(u)&\d(v)\cr
u&v\end{vmatrix}=\d(u)v\, (-)\, \d(v)u.
$$
Assume that the assertion holds for $k-1\geq 1$ and let $v\in
\bw^jV$ with $j\geq 2$. Then
$$
\d \lrcorner ( u\w v)=\d(u)v (-)\ u\w\!
\begin{vmatrix}\d(v)\cr v\end{vmatrix},
$$
which proves the assertion for $k$.\qed

Another way of interpreting \eqref{skew} is $ \d  \,u =\d(u)  (-)\,
u  \d,$ which implies that for all $j\geq 0$, $$\d^j  \,u + u\d^j
\succeq \d^j(u) = \Bb(u,x^j).$$ This is our desired connection with
Clifford algebras, which we now formulate.

%

\claim{\bf The Clifford Algebra action on $\bw V$.}
The words of the
form $ x^{i_1}\cdots x^{i_k} $ act on $\bw\V $ by wedging:
$$
x^{i_1}\cdots x^{i_k}(u)=x^{i_1}\w\cdots\w x^{i_k}\w u,\qquad \forall
u\in \bw\V.
$$
The words involving only the $\partial_j$ act by contraction:
$$
\d^{j_1}\cdots\d^{j_l}u=\d^{j_1}\, \lrcorner \,(\cdots\, \lrcorner
\, (\d^{j_k}\, \lrcorner \, u))\cdots)
$$

 Define
$<,>:\mathcal C_{\pm}\to \Acal$  by
$$
<u_1\oplus v_1^*,u_2\oplus v_2^*>\, =\, B(u_1,v_2)+B(u_2,v_1)
$$
This is a nondegenerate inner product on $V\oplus V^*$.

\begin{proposition}
The inner product possesses an orthogonal base.
\end{proposition}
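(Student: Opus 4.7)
The plan is to hyperbolize the natural basis $\{x^i\}\cup\{\partial^j\}$ of $V\oplus V^*$. First I would record the Gram matrix of $\langle\,,\,\rangle$ in that basis. Using the defining formula $\langle u_1\oplus v_1^*,u_2\oplus v_2^*\rangle=\Bb(u_1,v_2)+\Bb(u_2,v_1)$, one reads off $\langle x^i,x^j\rangle=0$, $\langle\partial^i,\partial^j\rangle=0$, and $\langle x^i,\partial^j\rangle=\Bb(x^i,x^j)=B^{ij}$. Hence the Gram matrix has the hyperbolic block shape $\begin{pmatrix}0&B\\ B^T&0\end{pmatrix}$, and nondegeneracy of the inner product reduces to nondegeneracy of $B$ on $V$.

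Next I would reduce to the case where $B$ is diagonal on $V$. Either one invokes the paper's standing convention that $\Bb(x^i,x^j)=\delta_{ij}$, or one runs a $\preceq$-analogue of Gram--Schmidt on $V$, using the negation map $(-)$ on $\widetilde{V}$ as a formal substitute for subtraction. Either way, one obtains a new basis $\{y^i\}$ of $V$ together with its $B$-dual basis $\{\eta^j\}\subset V^*$ (i.e.\ $\eta^j(y^i)=\Bb(y^i,y^j)=b_i\delta_{ij}$ with $b_i\in\tT_{\Acal}$ nonzero), in which the Gram matrix becomes $\begin{pmatrix}0&D\\ D&0\end{pmatrix}$ with $D=\operatorname{diag}(b_i)$.

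Finally, for each $i$ I would define
\begin{equation*}
e_i^+ := y^i+\eta^i,\qquad e_i^- := y^i+((-)\eta^i),
\end{equation*}
and compute directly from the bilinearity of $\langle\,,\,\rangle$:
\begin{equation*}
\langle e_i^+,e_j^+\rangle = 2b_i\delta_{ij},\quad
\langle e_i^-,e_j^-\rangle = (-)2b_i\delta_{ij},\quad
\langle e_i^+,e_j^-\rangle = (b_i\delta_{ij})^\circ.
\end{equation*}
For $i\neq j$ each cross-pairing is literally $0$, while for $i=j$ the pairing $\langle e_i^+,e_i^-\rangle=b_i^\circ$ is a quasi-zero and therefore $\succeq 0$. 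Thus $\{e_i^+,e_i^-\}_{i\in\Net}$ is a $\preceq_\circ$-orthogonal base of $V\oplus V^*$, with tangible nonzero diagonal values $\pm 2b_i$.

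The principal obstacle is the diagonalization step for $B$ on $V$: classical Gram--Schmidt uses subtraction in $\Acal$, which is unavailable, so one must argue that each step in the systemic version only introduces quasi-zeros into the off-diagonal entries, allowing one to replace equality by $\succeq$. Once that is in hand, the hyperbolic construction $e_i^\pm$ in the last paragraph is purely formal and carries over verbatim from the classical case, with the residual cross-terms $b_i^\circ$ being exactly the price one pays for working over a semialgebra with negation map rather than a field.
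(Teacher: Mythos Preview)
Your approach differs substantially from the paper's. The paper's proof is a single sentence: it simply declares that the natural basis $(x^0,\ldots,x^{n-1},(x^0)^*,\ldots,(x^{n-1})^*)$ of $V\oplus V^*$, where $(x^j)^*:=\partial_j$, is the desired orthogonal base. There is no diagonalization of $B$, no hyperbolic rotation $e_i^\pm=y^i\pm\eta^i$, and no Gram--Schmidt.

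Note, however, that in the paper's basis one still has $\langle x^i,(x^i)^*\rangle=B(x^i,x^i)$, which is nonzero, so that basis is not orthogonal in the strict sense you are aiming for (all distinct basis vectors pairing to zero). The paper is evidently using ``orthogonal'' more loosely---essentially recording the hyperbolic block shape $\begin{pmatrix}0&B\\ B^T&0\end{pmatrix}$ that you compute in your first paragraph, tacitly with the standing assumption $B(x^i,x^j)=\delta_{ij}$ used elsewhere in the section. Your construction of $e_i^\pm$ goes further and produces a basis whose Gram matrix is diagonal up to the residual quasi-zeros $\langle e_i^+,e_i^-\rangle=b_i^\circ$; this is a sharper conclusion than the paper actually asserts. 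On the other hand, the diagonalization of a general $B$ on $V$ that you propose is, as you yourself flag, not justified over a semialgebra, and it is unnecessary for the paper's more modest claim.
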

\proof Let us consider $x^i:= (x^i\oplus 0)$, $(x^j)^*:=(0\oplus
\partial_j)$, so that $x^i\oplus (x^j)^*=x^i+(x^j)^*$ and $\partial_j (e_i)=\Bb
(e_i, e_j)$. Then $(x^0,\ldots, x^{n-1}, {x^0}^*,\ldots,
{x^{n-1}}^*)$ is an orthogonal base.


\claim{\bf The Clifford  representation of an exterior semialgebra.
}\label{sec:sec4}

\begin{theorem}
The exterior semialgebra is an irreducible representation of the
Clifford semialgebra $\Ccal$.
\end{theorem}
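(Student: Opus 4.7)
The plan is to show that $\tbw V$ is a cyclic $\Ccal$-module generated by any nonzero basis element, and then deduce that no proper $\preceq$-submodule of $\tbw V$ stable under $\Ccal$ can contain a tangible element. First I will check that the prescription of Section \ref{sec:sec6}---$x^i$ acts by left wedging, $\d^j$ acts by the skew-derivation $\d^j\, \lrcorner \,$---really extends to a $\preceq$-action of $\Ccal$ on $\tbw V$. The only nontrivial relation is the Clifford commutation $\d^j x^i + x^i \d^j \succeq \Bb(x^i,x^j)$, and this is exactly the content of the identity $\d\,u + u\,\d \succeq \d(u)$ derived right after \eqref{skew}; the relations $x^ix^j + x^jx^i \succeq 0$ and $\d^i\d^j + \d^j\d^i \succeq 0$ come from the alternating character of $\wedge$ and the skew-derivation property via \eqref{eq:contdiag}.

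Once the action is established, the proof of irreducibility has two symmetric steps. Step one (``creation''): for any partition $\blamb \in \Pcal$, $\wbx^r_\blamb = x^{r-1+\lambda_1}\w x^{r-2+\lambda_2}\w\cdots\w x^{\lambda_r}$ is obtained by applying the tangible word $x^{r-1+\lambda_1}\cdots x^{\lambda_r}\in \Ccal_+$ to the vacuum $1 \in \bw^0 V$, so $\Ccal\cdot 1 = \tbw V$. Step two (``annihilation''): starting from $\wbx^r_\blamb$ and applying $\d^{\lambda_r}, \d^{1+\lambda_{r-1}}, \ldots, \d^{r-1+\lambda_1}$ in order, the contraction formula \eqref{eq:contdiag} shows that each step peels off exactly the rightmost remaining factor, because by the choice of exponents all other $\d^{h+\lambda_{r-h}}$ values on the surviving wedge basis elements vanish on the corresponding $B$-dual basis; the result is $1$ up to a $\preceq$-surpassing scalar. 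Combined with step one, this gives $\Ccal\cdot \wbx^r_\blamb \succeq \tbw V$ for every basis element.

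Now let $W\subseteq \tbw V$ be a $\Ccal$-invariant $\preceq$-submodule in the sense of Definition \ref{Lies2} containing a tangible element $w\in \tT_{\tbw V}$. Since $\tT_{\tbw V}$ consists (up to the negation map) of the basis elements $\wbx^r_\blamb$ and their negatives, $w = (\pm) \wbx^r_\blamb$ for some $\blamb$, and by the two steps above $\Ccal\cdot w$ contains, up to $\preceq_\circ$, every $\wbx^s_\bmu$. Because $\tT_{\tbw V}\cup\{0\}$ generates $\tbw V$ additively (axiom (ii) of Definition \ref{sursys0}), we conclude $W = \tbw V$, which is the desired irreducibility statement for the $\preceq$-system $(\tbw V, \tT_{\tbw V},(-),\preceq_\circ)$.

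The main obstacle is the lack of genuine negatives: in the creation/annihilation argument the classical proof computes $\d^{i}(x^i \w \omega) = \omega - x^i\w \d^i \omega$ and cancels an unwanted remainder against a conjugate term. Here we can only assert a surpassing relation, so one must be careful to organize the order of contractions so that the ``peeling off'' is exact on the nose (the $\Bb$-coefficients are $0$ or $1$ by the $B$-dual basis choice), rather than merely up to a quasi-zero. Once one keeps the order strictly decreasing as above, the residual terms in each contraction involve a repeated index $x^i \w \cdots \w x^i$, which is already $0$ in $\tbw V$, and the induction closes with honest equalities at the tangible level, giving genuine cyclicity rather than a weaker $\preceq$-cyclicity.
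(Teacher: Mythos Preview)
Your argument is correct but follows a genuinely different route from the paper's. After verifying the action (which both of you do in essentially the same way, checking \eqref{eq:commbb}--\eqref{eq:commbbe}), the paper argues by contradiction on the \emph{minimal degree} of a proper invariant submodule $W$: take $U\subseteq W$ consisting of the elements of minimal degree $r$, then observe that acting by an element of $V$ (or rather $V^*$---the paper's sentence appears to have a slip, since wedging by $v\in V$ raises degree, whereas the contradiction wants a degree-lowering operator) produces nonzero elements of strictly smaller degree still lying in $W$. Your proof instead establishes \emph{cyclicity} directly: the creation word $x^{r-1+\lambda_1}\cdots x^{\lambda_r}$ sends $1$ to $\wbx^r_\blamb$, and the annihilation word $\d^{r-1+\lambda_1}\cdots\d^{\lambda_r}$ sends $\wbx^r_\blamb$ back to $\pm 1$ with no quasi-zero error because the exponents $r-j+\lambda_j$ are strictly decreasing.

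What each approach buys: the paper's degree argument is a one-line structural observation once the action is in place, and it does not require tracking any particular basis element. Your approach is more constructive and, importantly, is more honest about the systemic subtleties---you explicitly identify the tangible set, you explain why the peeling-off is exact (distinct exponents force all but one term of \eqref{eq:contdiag} to vanish on the nose, not merely up to $\preceq$), and you are careful to state irreducibility as ``no proper invariant submodule contains a tangible element,'' which is the natural formulation in a $\preceq$-system. The paper's proof, by contrast, leaves implicit both which operator lowers the degree and why the result is nonzero in the absence of genuine subtraction.
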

\proof First of all we show that for any choice of a
$\preceq$-bilinear form $B$, the exterior semialgebra represents
$\Ccal$. It is obvious that each word of of $\Ccal$
defines an $\Acal$-endomorphism of $\tbw V$. We have to check the
commutation relations. It is obvious that the relation $u\cdot
v+v\cdot u$ is mapped to the endomorphism $u\w v +v\w v$ of $\tbw
V$, which is a quasi-zero in $\End_\Acal(\tbw)$. The same holds with
$u^*v^*+v^*u^*$ acting on $w$ as $u^*\lrcorner(v^*\lrcorner
w)+w^*\lrcorner (v^*\lrcorner w)\geq 0$. It remains to check the
action of $u^*v+v\cdot u^*$.  Let us consider $w:=w_1\w w_2$ where
$w_1\in V$ and $w_2\in\tbw V$. Then
$$
u^*(v\w w_1\w w_2)+v\w (u^*( w_1\w w_2))=u^*(v)w_1\w w_2+v\w u^*(w_1)w_2 (-)
$$

To prove the irreducibility, suppose on the contrary that the
exterior semialgebra has a proper invariant sub-module $W$ under the
action of $\Ccal$. Let $U$ be the submodule of $W$ of elements of
minimal degree, say $r$, whose typical elements are finite
$\Acal$-linear combinations of $u_1\w\cdots\w u_r$. But for any
$v\in V$, $vU$ is a submodule of elements of degree $r+1$, and thus
$W$ cannot be invariant.\qed

\begin{proposition}\label{action}
There is an $\Acal$-semialgebra homomorphism $$\gl(\tbw^1\V
):=\tbw^1_\Acal\V \otimes_\Acal\tbw^1\V _\Acal^*\sra
\End_\Acal(\bw^1\V ).$$
\end{proposition}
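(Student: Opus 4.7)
}
The plan is to build the homomorphism from the universal property of the tensor product, defining it first on simple tensors $u\otimes v^*$ with $u\in\tbw^1\V$ and $v^*\in\tbw^1\V^*$, by the rule
\[
\Phi(u\otimes v^*):\,\bw^1\V\sra \bw^1\V,\qquad w\longmapsto v^*(w)\cdot u,
\]
which is the semialgebra analogue of the classical outer product $V\otimes V^*\to\End V$. First I would check that for fixed $u$ the assignment $v^*\mapsto \Phi(u\otimes v^*)$ is $\Acal$-linear into $\End_\Acal(\bw^1\V)$, and symmetrically for fixed $v^*$; because $\Phi(au\otimes v^*)(w)=v^*(w)au=\Phi(u\otimes av^*)(w)$, the middle-linearity condition is satisfied and $\Phi$ descends to the tensor product $\tbw^1_\Acal\V\otimes_\Acal\tbw^1_\Acal\V^*$.

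Next I would verify that $\Phi$ is multiplicative. Using that $\gl(\tbw^1\V)$ carries the standard ``matrix-unit'' multiplication
\[
(u_1\otimes v_1^*)\cdot(u_2\otimes v_2^*)\;=\;v_1^*(u_2)\,\bigl(u_1\otimes v_2^*\bigr),
\]
a direct computation gives
\[
\Phi(u_1\otimes v_1^*)\circ\Phi(u_2\otimes v_2^*)(w)
= v_2^*(w)\,v_1^*(u_2)\,u_1
= \Phi\bigl((u_1\otimes v_1^*)(u_2\otimes v_2^*)\bigr)(w),
\]
which is the required compatibility with the semialgebra product on $\End_\Acal(\bw^1\V)$ (composition). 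Additivity in each slot has already been built in by the tensor-product construction, and the identity element $\sum_j x^j\otimes \d^j$ (when it makes sense, i.e.\ for finite rank) maps to $\id$, so $\Phi$ is a semialgebra homomorphism.

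The delicate point I expect to be the main obstacle is the interaction with the negation map. Recall that $\tbw^1\V=\im(l)\oplus\im(r)$ carries the switch negation $(-)u=r(u)$ on the ``left'' copy and vice versa, while $\tbw^1\V^*$ inherits the induced negation $((-)v^*)(w)=(-)(v^*(w))$ as in Remark \ref{Cliffext}. I would check that
\[
\Phi\bigl((-)u\otimes v^*\bigr)= (-)\,\Phi(u\otimes v^*)=\Phi\bigl(u\otimes (-)v^*\bigr)
\]
by evaluating on an arbitrary $w\in\bw^1\V$: both sides send $w$ to $(-)(v^*(w)\,u)$, using $((-)a)u=a((-)u)$ in $\tbw^1\V$. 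Finally I would observe that surpassing relations are preserved: if $u\preceq u'$ and $v^*\preceq (v')^*$ in their respective systems, then $\Phi(u\otimes v^*)(w)=v^*(w)u\preceq (v')^*(w)u'=\Phi(u'\otimes (v')^*)(w)$ for every $w$, so $\Phi(u\otimes v^*)\preceq\Phi(u'\otimes (v')^*)$ in the endomorphism system of Proposition \ref{Endsys}. Together these verifications produce the advertised $\Acal$-semialgebra homomorphism, and the hardest part will simply be bookkeeping the two copies in $\tbw^1\V$ so that $\Phi$ truly factors through the tensor product with negation rather than only through $V\otimes V^*$. \qed
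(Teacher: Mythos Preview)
Your argument is correct and builds the same ``outer product'' map $u\otimes v^*\mapsto\bigl(w\mapsto v^*(w)\,u\bigr)$ that the paper uses, but you present it abstractly via the universal property while the paper works entirely in coordinates: it writes a general element of $\tbw^1\V\otimes\tbw^1\V^*$ as a sum over $x^i\otimes\d_j$, $(-)x^i\otimes\d_j$, $x^i\otimes(-)\d_j$, $(-)x^i\otimes(-)\d_j$ and reads off the corresponding endomorphism, then runs the computation backwards to show every $\phi\in\End_\Acal(\V)$ arises this way. So the paper actually proves more (a bijection), but omits the multiplicativity check you supply; conversely, you omit the converse direction. Two small caveats: first, the ``matrix-unit'' product you invoke on $\gl(\tbw^1\V)$ is not defined in the paper prior to this point, so you are implicitly taking it as the semialgebra structure---reasonable, but worth flagging. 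Second, watch the codomain: the target is $\End_\Acal(\bw^1\V)$ with $\bw^1\V=\V$, whereas your $\Phi(u\otimes v^*)(w)=v^*(w)\,u$ lands in $\tbw^1\V$ when $u\in\tbw^1\V$; you should say explicitly how the two copies $\im(l),\im(r)$ are collapsed (or that the target is really $\End_\Acal(\tbw^1\V)$, as the paper's own coordinate formula with $(-)x^i$ suggests).
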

\proof A general element of $\V \otimes_\Acal\V ^*$ is  of the form
$$
\displaystyle{\sum_{0\leq i,j\leq n-1}}x^i\otimes
(a_{ij}\d_j+c_{ij}((-)\d_j))+ \displaystyle{\sum_{0\leq
i,j\leq n-1}}(-)x^i\otimes (a_{ij}'j+c_{ij}'((-)\d_j)),
$$
which gives the endomorphism
$$
u\mapsto \displaystyle{\sum_{0\leq i,j\leq
n-1}}x^i(a_{ij}\d_j(u)+c_{ij}((-)\d_j(u)))+
\displaystyle{\sum_{0\leq i,j\leq n-1}}(-)x^i
(a_{ij}'\d_j(u)+c_{ij}'((-)\d_j(u))).
$$
 Conversely each
$\phi\in\End_\Acal(\V )$ can be uniquely written as a
$\Acal$--linear combination of
$$
x^i\otimes\d_j,\qquad  (-)x^i\otimes\d_j,\qquad
x^i\otimes((-)\d_j),\qquad (-)x^i\otimes((-)\d_j)
$$
since, writing $u = \displaystyle{\sum_{0\leq i<n-1}}
u_ix^i+u'_i((-)x^i)$,
\begin{eqnarray*}
\phi(u)&=&\phi\left(\sum_{0\leq i<n-1}
u_ix^i+u'_i((-)x^i)\right)=\sum_{0\leq i<n-1}
u_i\phi(x^i)+u_i'\phi((-)x^i)\cr
&=&\sum_{i=0}^{n-1}u_ia_{ij}x^j+a_{ij}'((-)x^j)+u_i'c_{ij}x^j+u_i'c'_{ij}((-)x^j)\cr
&=&\displaystyle{\sum_{i,j=0}^{n-1}}\left[ x^i\otimes
(a_{ij}\d_j+c_{ij}((-)\d_j))\right.\cr &+& \left (-)x^i\otimes
(a_{ij}'\d_j+c_{ij}'((-)\d_j))\right](u). \hskip150pt \qed
\end{eqnarray*}

Proposition~\ref{action} defines a natural action of  $\gl (\tbw^1\V
)$ on $\V $.
\begin{definition}
The \textbf{Lie bracket} $[\phi,\psi]$ of $\phi,\psi\in \gl(\tbw^1\V
)$ is defined by:

$$
[\phi,\psi]=\phi\circ\psi\ (-)\ \psi\circ\phi.
$$
\end{definition}
\begin{proposition} The $\Acal$-semialgebra $\gl(\tilde T^1(\V ))$ is a  Lie $\preceq$-semialgebra  with a
negation map in the sense of Definition~\ref{Lies}.
\end{proposition}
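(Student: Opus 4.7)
The strategy is to realize $\gl(\tbw^1 V)$ as a sub-system of the endomorphism system $\overline{\End_\Acal(\bar T(V))}$ produced by Proposition~\ref{Endsys}, and then to verify the three axioms of Definition~\ref{Lies} directly, exploiting the fact that composition of $\Acal$-linear endomorphisms is strictly associative (not merely up to $\preceq$).

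First I would confirm that $\gl(\tbw^1 V)$ is stable under the inherited negation map $((-)f)(w):=(-)(f(w))$. Indeed, by Definition~\ref{def:defnegmap}, membership in $\gl$ is precisely the requirement that $(-)f$ interchange $l$ and $r$; since $(-)((-)f)=f$ trivially interchanges $l$ and $r$ back, we see $(-)\gl\subseteq\gl$. A parallel check shows $\gl$ is closed under composition: if $f,g\in\gl$, applying the defining relations of $f$ and $g$ in succession shows that $(-)(f\circ g)$ also swaps $l$ and $r$. Hence $\gl(\tbw^1 V)$ inherits the $\tT$-module-with-negation structure and the surpassing relation $\preceq_\circ$, becoming a systemic $\Acal$-module in the sense of Definition~\ref{Lies2}.

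The Lie bracket $[\phi,\psi]:=\phi\circ\psi\,(-)\,\psi\circ\phi$ is $\Acal$-bilinear by construction. I would verify the three axioms of Definition~\ref{Lies} as follows. Antisymmetry (i) reads $(-)[\psi,\phi]=(-)(\psi\circ\phi\,(-)\,\phi\circ\psi)=\phi\circ\psi\,(-)\,\psi\circ\phi=[\phi,\psi]$, using additivity and order-two of $(-)$. The quasi-zero self-bracket (ii) follows from $[\phi,\phi]=(\phi\circ\phi)^\circ\succeq\zero$, via Conditions~\ref{condi1}(iii). For the Jacobi $\preceq$-identity (iii), applied to any $\chi$, one expands both $\ad_\phi\ad_\psi(\chi)$ and $\ad_\psi\ad_\phi(\chi)$ into four triple compositions each, and the strict associativity of $\circ$ lets four of the eight terms cancel in pairs as quasi-zeros, leaving exactly $\ad_{[\phi,\psi]}(\chi)=(\phi\circ\psi\,(-)\,\psi\circ\phi)\circ\chi\,(-)\,\chi\circ(\phi\circ\psi\,(-)\,\psi\circ\phi)$. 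Thus equality holds in (iii), and the required surpassing relation follows a fortiori.

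The only real obstacle is the bookkeeping in (iii); because composition is genuinely associative, the classical Jacobi calculation transfers verbatim and yields equality rather than merely $\preceq$. Alternatively, once closure under negation and composition is established, one may simply invoke Proposition~\ref{Pois1}: $\gl(\tbw^1 V)$ is then an associative $\Acal$-semialgebra with negation map, which automatically carries the Lie $\preceq_\circ$-semialgebra structure, and the sharper antisymmetry $[\phi,\psi]=(-)[\psi,\phi]$ demanded by Definition~\ref{Lies} (as opposed to the weaker $[\phi,\psi]+[\psi,\phi]\succeq\zero$ of Definition~\ref{Lies0}) comes for free from the direct computation above.
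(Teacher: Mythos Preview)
Your approach is essentially the same as the paper's: treat (i) and (ii) as formal consequences of the negation-map axioms, and defer (iii) to the general fact (Proposition~\ref{Pois1}, i.e., \cite{Row16}) that any associative semialgebra with negation is a Lie $\preceq_\circ$-semialgebra under the commutator. The paper differs only cosmetically in its verification of~(i): rather than arguing abstractly that $(-)(a(-)b)=b(-)a$, it tests the identity $[\psi,\phi](u)=(-)[\phi,\psi](u)$ against an arbitrary wedge factor~$v$, using the concrete description $((-)w)\wedge v=v\wedge w$ of the negation on $\tbw^1\V$.

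One small overstatement in your write-up: in the Jacobi computation you claim that ``equality holds in~(iii).'' It does not. Expanding $\ad_\phi\ad_\psi(\chi)(-)\ad_\psi\ad_\phi(\chi)$ yields $\ad_{[\phi,\psi]}(\chi)+(\phi\chi\psi)^\circ+(\psi\chi\phi)^\circ$; the cross-terms pair off into genuine quasi-zeros, not actual zeros, since there is no cancellation in a semialgebra. So you only get $\ad_{[\phi,\psi]}\preceq_\circ\ad_\phi\ad_\psi(-)\ad_\psi\ad_\phi$, which is exactly what axiom~(iii) asks for. Your ``a fortiori'' clause lands on the right conclusion, but the intermediate equality claim should be dropped.
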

\proof Items (i),(ii) are obvious, by construction.  For all $v\in\V $
\begin{eqnarray*}
[\phi,\psi](u)\w v&=&\big(\phi(\psi(u))(-)\psi(\phi(u))\big)\w v =
\phi(\psi(u)\w v+v\w \psi(\phi(u)),
\end{eqnarray*}
and
\begin{eqnarray*}
[\psi,\phi](u)\w v&=&\big(\psi(\phi(u))(-)\phi(\psi(u))\big)\w v =
\psi(\phi(u)\w v+v\w \phi(\psi(u)).
\end{eqnarray*}
Thus
$$
[\psi,\phi](u)\w v=\big((-)[\phi,\psi](u)\big)\w v.
$$

 (iii) is a routine verification, done in \cite{Row16}.\qed
\begin{example}\label{ex58}  Let $b\in \V $ and $\d\in \V ^*$. Then $b\otimes\partial$ acts on $v\in\tbw\V $ as:
$$
(b\otimes\d)(v)=b\w (\d\, \lrcorner \, v).
$$
It is a $\Acal$-derivation, i.e. is trivial on $\Acal$ and
$$
(b\otimes\d)(u\w v)=((b\otimes\d)(u))\w v+u\w ((b\otimes\d)(v)) .$$
The critical case to verify is for $u =v_1$ and  $v = v_2\in
\tbw^1\V $, for which we have
$$
(b\otimes\d)(v_1\w v_2)=\d(v_1))b\w v_2+\d(v_2)v_1\w b .$$ The
general case follows by induction.  In fact, if $u\in \tbw^1\V $:
\begin{eqnarray*}
(b\otimes\d)(u\w v)&=&b\w (\d(u)v\,(-)\,u\d(v)) \d(u)b\w v+\d(v)u\w
b\cr &=&\d(u)b\w v+u\w b\d(v)=(b\otimes \d)(u)\w v+u\w (b\otimes
\d)(v).
\end{eqnarray*}
\end{example}

\claim{\bf Extending the $\gl (\tbw^1\V )$-action to $\tbw_\Acal\V
$. }


Consider the map \be \left\{\begin{matrix} \delta&:&\gl(\tbw^1\V
)&\lra& \End_\Acal(\tbw\V )\cr\cr &&\phi&\longmapsto&\delta(\phi)
\end{matrix}\right.
\ee defined by $\delta(\phi)(u)=\phi(u)$ if $u\in \tbw^1\V $ and
inductively
$$
\delta(\phi)(u\w v)=\delta(\phi)(u)\w v+u\w \delta(\phi)(v),\qquad
\forall u\in \gl(\tbw^1\V ),\ v\in \gl(\tbw^{k-1}\V ).
$$

\begin{proposition}
The exterior $\Acal$--semialgebra $\tbw_\Acal\V $ is a module over
the Lie semialgebra $\gl(\bw^1\V )$ (we say in short that it is a
$\gl(\bw^1\V )$-module),  in the sense that \be
\phi(\psi(u))(-)\psi(\phi(u))=[\phi,\psi](u). \ee
\end{proposition}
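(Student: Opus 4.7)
My plan is to argue by induction on the wedge degree, exploiting the derivation property built into the inductive definition of $\delta$, and keeping careful track of where genuine equality fails and must be replaced by a surpassing relation.

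\medskip\noindent\textbf{Base case.} On $\tbw^1\V$ one has $\delta(\phi) = \phi$ by definition, so the claim
$$\delta(\phi)(\delta(\psi)(u))\,(-)\,\delta(\psi)(\delta(\phi)(u)) = \delta([\phi,\psi])(u)$$
is exactly the definition of $[\phi,\psi]$ given just before the proposition. Here one should also verify that $\delta(\phi)$ is well defined on $\tbw^1\V$ including the degree-$1$ switched copy $(-)\bw^1\V$, i.e.\ that $\delta(\phi)((-)v) = (-)\delta(\phi)(v)$; this is immediate since $\phi\in \gl(\tbw^1\V)$ is $\Acal$-linear and $(-)$ on degree $1$ is induced by multiplication by $(-)1_\Acal$.

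\medskip\noindent\textbf{Inductive step.} Suppose the identity holds on $\tbw^j\V$ for all $j<k$. Take $w = u\w v$ with $u\in \tbw^1\V$ and $v\in \tbw^{k-1}\V$. Expand using the derivation rule twice:
\begin{align*}
\delta(\phi)\,\delta(\psi)(u\w v) &= \delta(\phi)\bigl(\delta(\psi)(u)\w v + u\w \delta(\psi)(v)\bigr)\\
&= \delta(\phi)\delta(\psi)(u)\w v + \delta(\psi)(u)\w \delta(\phi)(v)\\
&\qquad + \delta(\phi)(u)\w \delta(\psi)(v) + u\w \delta(\phi)\delta(\psi)(v),
\end{align*}
and symmetrically for $\delta(\psi)\delta(\phi)(u\w v)$. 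Now form $\delta(\phi)\delta(\psi)(u\w v)\,(-)\,\delta(\psi)\delta(\phi)(u\w v)$. The two ``pure'' terms on $u$ give, by the induction hypothesis on $u$,
$$\bigl(\delta(\phi)\delta(\psi)(u)\,(-)\,\delta(\psi)\delta(\phi)(u)\bigr)\w v = \delta([\phi,\psi])(u)\w v,$$
and similarly the two terms acting on $v$ give $u\w \delta([\phi,\psi])(v)$; together these reassemble (by the derivation rule defining $\delta$) to $\delta([\phi,\psi])(u\w v)$. The four remaining cross terms are
$$\delta(\psi)(u)\w\delta(\phi)(v) + \delta(\phi)(u)\w\delta(\psi)(v) \ (-)\ \delta(\phi)(u)\w\delta(\psi)(v) \ (-)\ \delta(\psi)(u)\w\delta(\phi)(v),$$
which form a quasi-zero $c^\circ$ in the sense of Definition \ref{negmap}.

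\medskip\noindent\textbf{Conclusion and main obstacle.} By Condition \ref{condi1}(iii) the difference $\delta(\phi)\delta(\psi)(u\w v)\,(-)\,\delta(\psi)\delta(\phi)(u\w v)$ surpasses $\delta([\phi,\psi])(u\w v)$, which is the precise semialgebraic version of the bracket relation. The main technical obstacle is not the inductive bookkeeping, but making sure that the cross terms truly assemble into a quasi-zero in the extended exterior system rather than only into an element we happen to want to vanish: this depends on the compatibility of $(-)$ with $\w$ established in the construction of $\widetilde\bw\V$ (in particular $(-)(a\w b) = ((-)a)\w b = a\w ((-)b)$), together with the fact that $\delta(\phi)$ and $\delta(\psi)$ preserve $(-)$ at every degree, which follows from the base case together with the inductive definition of $\delta$. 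Once this compatibility is in place, everything reduces to the classical Leibniz-type cancellation, now recast as a $\preceq_\circ$-statement as required by Definition \ref{Lies}.
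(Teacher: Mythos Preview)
Your proof is correct and follows exactly the same route as the paper: check the identity on $\tbw^1\V$ directly from the definition of $[\phi,\psi]$ (the paper does this by wedging against a test vector $v$, which is just the operator interpretation of your base case), and then lift to higher wedge degrees by induction via the Leibniz rule that defines $\delta$. The paper's inductive step is literally the two words ``by induction''; you have filled in the computation and, in doing so, correctly noticed that in degree $\geq 2$ the four cross terms form a genuine quasi-zero, so the conclusion is a $\preceq_\circ$-statement rather than a strict equality---this is more precise than what the paper writes, and fully consistent with its overall philosophy of replacing equality by the surpassing relation (cf.\ Definition~\ref{Lies}(iii)).
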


\proof For all $v\in\V $,
$$
\big(\phi(\psi (u))(-)\psi(\phi(u))\big)\w v=\phi(\psi(u))\w v+v\w
\psi(\phi(u))=[\phi,\psi](u)\w v.
$$
By induction,
$$
\delta([\phi,\psi])=[\delta(\phi),\delta(\psi)]
$$
where the Lie bracket of $\End_\Acal(\tbw\V )$ is defined
analogously.\qed

In the sequel we consider  the action  of $\V \otimes \V ^*$
on $\tbw \V $ given by:
$$
(u\otimes \d )(\bfv)=u\w (\d\, \lrcorner \, \bfv).
$$
which corresponds to the case $B(x^i,x^j)=\delta_{ij}$.
\begin{proposition} The following commutation rules hold for any $u\in
\tbw\V $:
\begin{eqnarray}
&&x^i\w x^j \w u\, +\, x^j\w x^i \w u\succeq
0\label{eq:commbb},\\\cr && \d_i\, \lrcorner \, (\d_j \,
\lrcorner\, u)
 +\, \d_j\, \lrcorner \, ( \d_i\, \lrcorner \, u)
\succeq 0\label{eq:commbebe},\\ \cr && \d_i\, \lrcorner \, (x^j\w u)
+x^i\w(\d_j\,\lrcorner \, u) \,\succeq\label{eq:commbbe} \delta_{ij}
\Bb(x^i,x^j)u.
\end{eqnarray}
\end{proposition}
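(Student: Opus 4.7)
All three surpassing relations are the defining Clifford commutation rules \eqref{eq:cacr}--\eqref{eq:comm3} transported to $\tbw\V$ via the representation of $\Ccal$ constructed in Section~\ref{sec:sec4}. The plan is to verify each directly, using only the negation map on $\tbw\V$ and the skew-derivation identity \eqref{skew}, under the convention $B(x^i,x^j) = \delta_{ij}$ in force in this subsection.

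For \eqref{eq:commbb}, Lemma~\ref{neg17} together with \eqref{sw1} tells us that the negation map on $\tbw^{\ge 2}\V$ swaps the first two wedge entries. Hence $x^j \w x^i \w u = (-)(x^i \w x^j \w u)$, and the left-hand side of \eqref{eq:commbb} reduces to the quasi-zero $(x^i \w x^j \w u)^\circ$, which surpasses $\zero$ by Condition~\ref{condi1}(iii).

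For \eqref{eq:commbbe}, applying the skew-derivation identity \eqref{skew} with the degree-one factor taken to be $x^j$ and the remaining factor $u$ yields
\[
\d_i \, \lrcorner \, (x^j \w u) \;=\; \d_i(x^j)\, u \ (-)\ x^j \w (\d_i \, \lrcorner \, u) \;=\; \delta_{ij}\, \Bb(x^i,x^j)\, u \ (-)\ x^j \w (\d_i \, \lrcorner \, u).
\]
Adding the remaining summand on the left-hand side of \eqref{eq:commbbe} converts the negated contribution into the quasi-zero $\bigl(x^j \w (\d_i \, \lrcorner \, u)\bigr)^\circ$ (after invoking Lemma~\ref{neg17} to absorb the internal transposition into $(-)$), so the total surpasses $\delta_{ij}\, \Bb(x^i,x^j)\, u$.

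For \eqref{eq:commbebe} I would induct on the length $k$ of a simple wedge $u = v_1 \w \cdots \w v_k$, extending by $\Acal$-linearity. The base case $k=2$ follows from the $2 \times 2$ determinant antisymmetry built into Definition~\ref{def:conr}, after using Lemma~\ref{neg17} to identify a swap of the first two wedge factors with $(-)$. The inductive step applies \eqref{skew} twice, peeling off $v_1$ from each iterated contraction and invoking the inductive hypothesis on the tail $v_2 \w \cdots \w v_k$. The chief obstacle is precisely this step: one must carefully bookkeep the signs produced by the iterated skew-derivation against those absorbed into the negation map, which (unlike in the classical setting) implements only the swap of the first two factors rather than an arbitrary transposition. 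Working simple-wedge by simple-wedge is designed to control this interaction and confirm that the residual terms combine into a quasi-zero with no surviving non-quasi-zero contribution.
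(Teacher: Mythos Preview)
Your treatment of \eqref{eq:commbb} is fine and matches the paper. There are real issues with the other two parts.

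For \eqref{eq:commbbe}: the printed statement has mismatched indices (first summand $\d_i,x^j$, second summand $x^i,\d_j$); the paper's own proof actually establishes the consistent version $\d^j\lrcorner(x^i\w u)+x^i\w(\d^j\lrcorner u)\succeq\delta_{ij}\Bb(x^i,x^j)u$, which is clearly what is intended. Your computation via \eqref{skew} is correct and gives $\delta_{ij}\Bb(x^i,x^j)u\ (-)\ x^j\w(\d_i\lrcorner u)$, but then you add $x^i\w(\d_j\lrcorner u)$ and claim the result is the quasi-zero $\bigl(x^j\w(\d_i\lrcorner u)\bigr)^\circ$. That would require $x^i\w(\d_j\lrcorner u)=x^j\w(\d_i\lrcorner u)$, which is false for $i\ne j$; Lemma~\ref{neg17} swaps the first two \emph{wedge factors}, it does not swap the \emph{indices} $i$ and $j$. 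If you instead prove the consistent-index version (as the paper does), your argument is exactly the paper's: the two non-$\delta_{ij}$ terms genuinely pair into a quasi-zero.

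For \eqref{eq:commbebe}: your inductive scheme via iterated applications of \eqref{skew} is workable but, as you yourself flag, the sign bookkeeping is delicate and you do not carry it out. The paper avoids all of this with a one-line argument you have overlooked: iterated contraction is contraction by a wedge in $\bw\V^*$, so
\[
\d_i\lrcorner(\d_j\lrcorner u)+\d_j\lrcorner(\d_i\lrcorner u)=(\d_i\w\d_j+\d_j\w\d_i)\lrcorner u,
\]
and $\d_i\w\d_j+\d_j\w\d_i$ is already a quasi-zero in the dual exterior semialgebra, whence the contraction surpasses $0$. This bypasses the induction entirely and is the argument you should use.
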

\proof
 For $u\in \tbw\V $, the first rule
$$
x^i\w x^j \w u+x^j\w x^i\w u\succeq 0
$$
 is immediate. As for $\eqref{eq:commbebe},$
$$
\d_i\, \lrcorner \,(\d_j\, \lrcorner \, u)+\d_j\, \lrcorner
\, (\d_i\, \lrcorner \, u)=(\d_i\w \d_j+\d_j\w
\d^i)\, \lrcorner \, u\succeq 0.
$$
Let us finally check \eqref{eq:commbbe}. For $u\in  \tbw^{ 1}\V $
              we claim that
\be \d^j\lrcorner(x^i\w u)+x^i\w (\d^j\lrcorner u)\succeq
\delta_{ij}\Bb(x^i,x^j)u\label{eq:comp}. \ee Since both sides are
elements of $\tbw^1\V $, \eqref{eq:comp} means that
$$
\left[\d^j\lrcorner (x^i\w u)+x^i\w (\d^j\lrcorner u)\right]\w
v\succeq \delta_{ij}\Bb(x^i,x^j)u\w v
$$
for all $v\in\tbw^{\geq 1}\V $. Without loss of generality we may
assume that $v\in \V $. Then: \be \begin{aligned} \d^j\lrcorner
(x^i\w u)\w v+x^i\w (\d^j\lrcorner u)\w v
=&\delta_{ij}\Bb(x^i,x^j)u\w v+\d^j(u)v\w x^i+\d^j(u)x^i\w v \\
&\succeq \delta_{ij}\Bb(x^i,x^j)u\w v,
\end{aligned}\ee
whence \eqref{eq:commbbe}, since  $v$ is arbitrary.

For $u\in  \bw^{\geq 2}\V $, the proof consists in writing $u=u_1\w
v$ for $u_1\in\V $, and is analogous to that for $u\in \bw^1\V $:
\begin{eqnarray*}
\d^j\, \lrcorner \, (x^i\w u)+x^i\w \d^j\, \lrcorner \,
u&=&\delta_{ij}\Bb(x^i,x^j)u\,(-)\,x^i\w \d^j\, \lrcorner \, (u_1\w
v)+x^i\w\d^j\, \lrcorner \, (u_1\w v)\cr
&=&\delta_{ij}\Bb(x^i,x^j)u+\d^j(u_1)v\w x^i+\d^j(u_1) x^i\w v \cr
&\succeq& \delta_{ij}\Bb(x^i,x^j)\, u.
\end{eqnarray*}

%

%

\section{Schubert Derivations on exterior semialgebras of type
$1$}\label{def:HSD}

\begin{definition}\label{def:defHSD}
A \textbf{Hasse-Schmidt} (HS) derivation on $(\tbw_\Acal V,(-)) $ is
a map
$$
\Dcal(z):\tbw_\Acal\V \sra \tbw_\Acal\V [[z]]
$$
such that
$$
\Dcal(z)(u\w v)=\Dcal(z)u\w \Dcal(z)v.
$$

\end{definition}
\begin{definition}
Let $f\in \End_\Acal(\bw^1\V )$. We denote by $(-)f$ the map such
that
$$
((-)f)(u)=(-)f(u).
$$
\end{definition}
In other words $((-)f)(u)\w v=v\w f(u)$.
Then $\Dcal_f(z)$ and $\ovDc_f(z)$ are mutually inverses, where, according to \ref{def:defnegmap}
$$
(-f)(u)\w v=v\w f(u).
$$
\begin{definition}
Let $\Dcal(z)$ be a HS--derivation on $\tbw_\Acal\V $. Its transpose
$\Dcal(z)^T$ is defined by
$$
\Dcal(z)^T(\d)\lrcorner u=\d\lrcorner \Dcal(z)u\qquad \forall (u,
\d)\in\tbw\V \times \tbw_\Acal\V ^*.
$$
\end{definition}
\begin{proposition}
If $\Dcal(z)$ is a HS--derivation on $\tbw_\Acal\V $, then
$\Dcal(z)^T$ is a HS--derivation on $\tbw_\Acal\V ^*$.
\end{proposition}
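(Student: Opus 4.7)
The plan is to reduce the verification of the HS identity
\[
\Dcal(z)^T(\d\w\d') \;=\; \Dcal(z)^T(\d)\w\Dcal(z)^T(\d')
\]
to the contraction pairing with arbitrary $u\in\tbw_\Acal\V$, since the pairing $\lrcorner$ is nondegenerate (two dual elements that contract identically against every $u$ must coincide, at least up to the systemic surpassing relation $\preceq_\circ$). Thus it suffices to show
\[
\Dcal(z)^T(\d\w\d')\lrcorner u \;=\; \bigl(\Dcal(z)^T(\d)\w\Dcal(z)^T(\d')\bigr)\lrcorner u
\]
for all $u$. By multilinearity one may assume $\d, \d'$ and $u$ are decomposable: $\d = \d^1\w\cdots\w\d^j$, $\d' = \d^{j+1}\w\cdots\w\d^k$, and $u = u_1\w\cdots\w u_k$.

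The first step is to observe that the HS axiom forces $\Dcal(z)$ to be homogeneous of degree zero: comparing graded components in $\Dcal(z)(u\w v)=\Dcal(z)u\w\Dcal(z)v$ shows that a factor of degree $1$ is sent to a series with values of degree $1$, so $\Dcal(z)|_{\V}$ yields an $\Acal[[z]]$-linear map $\phi(z):\V\to\V[[z]]$, and on higher wedges one recovers $\Dcal(z)(u_1\w\cdots\w u_k)=\phi(z)u_1\w\cdots\w\phi(z)u_k$. Dually, the ordinary transpose $\phi(z)^T:\V^*\to\V^*[[z]]$ on degree one extends multiplicatively to a map $\widehat{\phi(z)^T}$ on $\tbw_\Acal\V^*$, which is a HS derivation by construction.

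The heart of the argument is then to identify $\widehat{\phi(z)^T}$ with $\Dcal(z)^T$ via the determinantal formula \eqref{eq:contdiag}. Applying the formula twice gives
\[
\Dcal(z)^T(\d\w\d')\lrcorner u \;=\; (\d^1\w\cdots\w\d^k)\lrcorner\bigl(\phi(z)u_1\w\cdots\w\phi(z)u_k\bigr),
\]
which expands as a signed (in the systemic sense, using $(-)$) sum over permutations of products $\prod_{i}\d^{i}\bigl(\phi(z)u_{\sigma(i)}\bigr)$. On the other side,
\[
\bigl(\Dcal(z)^T\d \w \Dcal(z)^T\d'\bigr)\lrcorner u \;=\; (\phi(z)^T\d^1\w\cdots\w\phi(z)^T\d^k)\lrcorner(u_1\w\cdots\w u_k),
\]
which expands as a sum over the same permutations of $\prod_{i}\phi(z)^T\d^{i}(u_{\sigma(i)})$. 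Since $\phi(z)^T\d^{i}(u_{\sigma(i)})=\d^{i}(\phi(z)u_{\sigma(i)})$ term-by-term (this is precisely the definition of the transpose in degree one), the two expansions match entry-for-entry. Non\-degeneracy of $\lrcorner$ then yields $\Dcal(z)^T(\d\w\d')=\widehat{\phi(z)^T}(\d\w\d')=\widehat{\phi(z)^T}(\d)\w\widehat{\phi(z)^T}(\d')=\Dcal(z)^T(\d)\w\Dcal(z)^T(\d')$, which is the HS identity.

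The main obstacle is to keep the systemic bookkeeping straight: the ``signed'' sum in \eqref{eq:contdiag} is a linear combination built from the abstract negation $(-)$, not from literal minus signs, so when we commute $\phi(z)$ across wedges and pass from $\d^i(\phi(z)u_j)$ to $\phi(z)^T\d^i(u_j)$, we must check that the negation pattern is preserved under the HS axiom. In the same spirit as Proposition \ref{prop424} and Proposition \ref{neg70}, the equalities above may need to be read as $\preceq_\circ$-surpassing relations, but this is exactly the category in which HS derivations on $(\tbw_\Acal\V,(-))$ were defined, so the conclusion retains the required form. A subsidiary check is that the extension of the contraction $\lrcorner$ to higher-degree duals via iterated application is compatible with $(-)$; this follows from the skew-derivation property established at the end of Section \ref{sec:sec6}, applied inductively.
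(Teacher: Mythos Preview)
Your argument is correct but considerably more elaborate than necessary. The paper dispatches this in a single chain of equalities, using nothing beyond the defining equation of the transpose and the HS property of $\Dcal(z)$:
\[
\Dcal(z)^T(\d\w\gamma)\lrcorner(u\w v)
= (\d\w\gamma)\lrcorner\Dcal(z)(u\w v)
= (\d\w\gamma)\lrcorner\bigl(\Dcal(z)u\w\Dcal(z)v\bigr)
= (\d\lrcorner\Dcal(z)u)\w(\gamma\lrcorner\Dcal(z)v),
\]
and the last expression is $\bigl(\Dcal(z)^T\d\lrcorner u\bigr)\w\bigl(\Dcal(z)^T\gamma\lrcorner v\bigr)$, again by the definition of the transpose. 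That is the whole proof.

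Your detour through the degree-one restriction $\phi(z)$, the auxiliary operator $\widehat{\phi(z)^T}$, and the term-by-term comparison of determinantal expansions is sound, and it does make explicit the mechanism behind the factorization step $(\d\w\gamma)\lrcorner(U\w V)=(\d\lrcorner U)\w(\gamma\lrcorner V)$ that the paper uses without comment. But none of this extra structure is needed: once you write down the defining relation $\Dcal(z)^T(\eta)\lrcorner w=\eta\lrcorner\Dcal(z)w$ for arbitrary $\eta\in\tbw\V^*$, the multiplicativity of $\Dcal(z)$ transfers directly to $\Dcal(z)^T$. Your systemic caveats about $\preceq_\circ$ and negation bookkeeping are also superfluous here; the argument goes through as genuine equalities, with no surpassing relations entering.
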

\proof $\Dcal(z)^T(\d\w \gamma)\lrcorner (u\w v) = (\d\w
\gamma)\lrcorner \Dcal(z) (u\w v) =  (\d\w \gamma)\lrcorner
(\Dcal(z)u\w \Dcal(z)v ) =\d\lrcorner \Dcal(z)u \w \gamma
\lrcorner \Dcal(z)v = \Dcal(z)^T(\d)\lrcorner u \w
\Dcal(z)^T(\gamma)( v).$\qed
\begin{proposition} [{\cite[Theorem~3.12]{GaR}}]
If $f(z)\in\End_\Acal(\V )[[z]],$ there is a unique HS-derivation
$\Dcal_{f(z)}(z)$ on $\tbw\V $ such that $\Dcal_f(z)_{|\V
}=\sum_{i\geq 0}f^iz^i$.
\end{proposition}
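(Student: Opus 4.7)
The plan is to handle uniqueness first, since it is forced by the multiplicative Leibniz-type property in Definition~\ref{def:defHSD}, and then construct $\Dcal_f(z)$ explicitly by extending the prescribed formula from $V$ to $\tbw V$ multiplicatively. Uniqueness is immediate: if two HS-derivations $\mathcal D$ and $\mathcal D'$ agree on $V$, then for any simple wedge $v_1 \w \cdots \w v_k$ with $v_j \in V$ the defining property $\mathcal D(z)(u \w v) = \mathcal D(z)(u) \w \mathcal D(z)(v)$ and induction on $k$ give
\[
\mathcal D(z)(v_1 \w \cdots \w v_k) = \mathcal D(z)(v_1) \w \cdots \w \mathcal D(z)(v_k) = \mathcal D'(z)(v_1 \w \cdots \w v_k),
\]
and simple wedges $\Acal$-span $\tbw V$.

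For existence, first set $\Dcal_f(z)|_V := \sum_{i \geq 0} f^i z^i \in \End_\Acal(V)[[z]]$. Then define, on a simple wedge,
\[
\Dcal_f(z)(v_1 \w \cdots \w v_k) := \Dcal_f(z)(v_1) \w \cdots \w \Dcal_f(z)(v_k),
\]
viewed in $\tbw V [[z]]$ (with scalars multiplied in $\Acal[[z]]$ and the wedge applied to $V$-valued coefficients), and extend $\Acal$-linearly. One must check this descends to the quotient defining $\tbw V$ by verifying two relations. First, the switching relation encoded by the negation: since $\Dcal_f(z)|_V$ is a sum of $\Acal$-linear maps, it commutes with $(-)$ on $V$, so
\[
\Dcal_f(z)((-)v_1) \w \Dcal_f(z)(v_2) \w \cdots = (-)\Dcal_f(z)(v_1) \w \Dcal_f(z)(v_2) \w \cdots = \Dcal_f(z)(v_2) \w \Dcal_f(z)(v_1) \w \cdots,
\]
matching the switch on the source side. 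Second, the square relation $x^i \w x^i \sim 0$: the image $\Dcal_f(z)(x^i) \w \Dcal_f(z)(x^i) = \sum_m z^m \sum_{j+k=m} f^j(x^i) \w f^k(x^i)$ has a quasi-zero coefficient in each degree, since diagonal summands are quasi-zero (every $v \w v$ with $v \in V$ is quasi-zero in type~$1$) and off-diagonal terms pair into $(f^j(x^i) \w f^k(x^i))^\circ$.

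The main obstacle is the tension between the equality in Definition~\ref{def:defHSD} and the weaker quasi-zero behaviour just exhibited: $x^i \w x^i$ is literally zero in $\tbw V$ (type~$1$), whereas the natural multiplicative extension produces only a quasi-zero output. Reconciling this, either by absorbing quasi-zeros via $\preceq_\circ$ in the extended exterior system or by checking at the level of representatives that the induced map is well-defined on the quotient, is the real technical content. Once this compatibility is established, the Leibniz property $\Dcal_f(z)(u \w v) = \Dcal_f(z)(u) \w \Dcal_f(z)(v)$ extends from simple wedges to all of $\tbw V$ by $\Acal$-linearity, completing the construction.
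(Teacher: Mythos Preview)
The paper does not supply its own proof of this proposition; it simply cites \cite[Theorem~3.12]{GaR} and moves on. So there is no argument in the paper to compare your proposal against directly.

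That said, your proposal is an outline rather than a proof, and you say so yourself. The uniqueness half is fine. For existence you set up the natural multiplicative extension, correctly isolate the genuine difficulty---that $x^i\w x^i=0$ in $\tbw V$ (type~1) while $\Dcal_f(z)(x^i)\w\Dcal_f(z)(x^i)$ is only a quasi-zero---and then write ``Once this compatibility is established\dots'' without establishing it. This gap is not cosmetic: if $\Dcal_f(z)$ is to be an $\Acal$-linear map on $\tbw V$ satisfying the multiplicative identity of Definition~\ref{def:defHSD} with equality, then applying both sides to $x^i\w x^i=0$ forces $\Dcal_f(z)(x^i)\w\Dcal_f(z)(x^i)=0$, which fails in type~1 whenever $f(x^i)$ is not a scalar multiple of a single basis vector. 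Neither of the two escape routes you mention (absorbing quasi-zeros via $\preceq_\circ$, or working at the level of representatives before the quotient) is carried out, and each changes what is actually being claimed. The reference \cite{GaR} to which the paper defers is where the resolution lives; to turn your sketch into a proof you would have to make one of those choices explicit and verify that the resulting object still satisfies Definition~\ref{def:defHSD} in the intended sense.
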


\begin{proposition}\label{prop:prop514}
The unique HS-derivations $\Dcal_f(z)$ and $\ovDc_f(z)$ such that
$$
\Dcal_f(z)=\sum_{n\geq 0}f^nz^n\qquad\mathrm{and}\qquad
\ovDc_f(z)=1\,(-)\, f\cdot z
$$
are  quasi--inverses.

\end{proposition}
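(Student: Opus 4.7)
The plan is to reduce the claim, via the uniqueness of HS--derivations extending a prescribed action on $\V$, to a direct computation on degree $1$ elements, and then to propagate the resulting $\succeq$ relation to all of $\tbw_\Acal\V$ through the HS--derivation property.

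First, I would compute the formal product $\ovDc_f(z)\cdot \Dcal_f(z)$ in $\End_\Acal(\V)[[z]]$ coefficient-wise. Since $\ovDc_f(z)=1\,(-)\,fz$ and $\Dcal_f(z)=\sum_{n\geq 0}f^n z^n$, the Cauchy product gives coefficient $f^n\,(-)\,f\cdot f^{n-1}=f^n\,(-)\,f^n=(f^n)^\circ$ at $z^n$ for $n\geq 1$, and the identity at $z^0$. Thus for $v\in\V$,
\[
\bigl(\ovDc_f(z)\cdot\Dcal_f(z)\bigr)(v)\;=\;v+\sum_{n\geq 1}\bigl(f^n(v)\bigr)^\circ z^n\;\succeq\; v,
\]
so the product surpasses $\id$ on $\V$. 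An entirely symmetric calculation shows $\Dcal_f(z)\cdot\ovDc_f(z)$ also surpasses $\id$ on $\V$.

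Second, I would observe that the formal product of two HS--derivations is again an HS--derivation: extending the wedge product $z$-linearly to $\tbw\V[[z]]$ and using $\Dcal_f(z)(u\w v)=\Dcal_f(z)u\w \Dcal_f(z)v$ together with the analogous identity for $\ovDc_f(z)$, one obtains
\[
\bigl(\ovDc_f(z)\cdot\Dcal_f(z)\bigr)(u\w v)=\bigl(\ovDc_f(z)\cdot\Dcal_f(z)\bigr)(u)\w \bigl(\ovDc_f(z)\cdot\Dcal_f(z)\bigr)(v).
\]
Since $\id$ is trivially an HS--derivation and $\succeq$ is preserved by $\w$ (Conditions~\ref{condi}), the inequality on $\V$ propagates inductively to every $u_1\w\cdots\w u_k$:
\[
\bigwedge_i \bigl(\ovDc_f(z)\cdot\Dcal_f(z)\bigr)(u_i)\;\succeq\;\bigwedge_i u_i,
\]
and so $\ovDc_f(z)\cdot\Dcal_f(z)\succeq\id$ holds on all of $\tbw_\Acal\V$. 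The same argument yields $\Dcal_f(z)\cdot\ovDc_f(z)\succeq\id$, which is the statement of being quasi--inverses.

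The main obstacle, I expect, is not the combinatorial identity (which is the classical telescoping witnessing $(1-fz)\sum f^nz^n=1$, here yielding a quasi-zero rather than a true zero at each positive power of $z$) but the bookkeeping: one must show carefully that the formal product of HS--derivations in $\End_\Acal(\tbw\V)[[z]]$ is legitimately an HS--derivation, which requires the $z$-linear extension of $\w$ and the compatibility of $\succeq$ with both the Cauchy product and the wedge. Once these formalities are in place, the proof reduces to the one-line telescoping computation above.
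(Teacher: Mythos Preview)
Your argument is correct and rests on the same telescoping identity the paper uses, but the organization is genuinely different. The paper checks the quasi--inverse relation on a degree~$1$ element $u$ by wedging against explicit test elements $v_1\w v_2$: it expands $(\ovDc_f(z)\Dcal_f(z)u)\w v_1\w v_2$, uses the operational definition of $(-)$ in degree~$1$ (namely $((-)w)\w v_1 = v_1\w w$) to convert the negation into a swap, splits off the constant term $u\w v_1$, and observes that the remaining two summands form a quasi--zero. You instead compute the Cauchy product $\ovDc_f(z)\cdot\Dcal_f(z)$ directly in $\End_\Acal(\V)[[z]]$, recognize each coefficient at $z^n$ ($n\ge 1$) as $(f^n)^\circ$, and invoke the axiom $c^\circ\succeq 0$ without ever introducing test elements. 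What the paper's approach buys is that it stays close to the operational definition of negation in $\tbw^1\V$, so there is no hidden appeal to how $\preceq_\circ$ interacts with the module structure; what your approach buys is a cleaner separation of the two ingredients (the degree--$1$ identity and the propagation via the HS property), and an explicit induction to higher wedge degrees that the paper's proof leaves implicit. One small correction: your citation of Conditions~\ref{condi} for the compatibility of $\succeq$ with $\w$ is not quite right, since those conditions concern addition and $\tT$--multiplication; the compatibility of $\preceq_\circ$ with $\w$ follows instead from $(-)(a\w b)=((-)a)\w b$, which forces $a\w c^\circ=(a\w c)^\circ$.
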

\proof Let $(u,v_1,v_2)\in \tbw^1 \V \times \tbw^1 \V \times \tbw\V
$. Then
\begin{eqnarray*}
(\ovDc_f(z)\Dcal_f(z)u)\w (v_1\w
v_2)&=&(\Dcal_f(z)u\,(-)\,\Dcal_f(z) f(u))\w v_1\w v_2\cr &=&
(\Dcal_f(z)u\w v_1+v_1\w \Dcal_f(z)f(u))\w v_2\cr\cr &=&\big(u\w
v_1+z\Dcal_f(z)f(u)\w v_1+v_1\w \Dcal_f(z)f(u)\big)\w v_2\cr\cr
&\succeq& u\w (v_1\w v_2).
\end{eqnarray*}
Similarly
\begin{eqnarray*}
(\Dcal_f(z)\ovDc_f(z)u)\w (v_1\w v_2)&=&[\Dcal_f(z)(u(-)f(u)z)]\w
v_1\w v_2\cr\cr &=&[\Dcal_f(z)u(-)\Dcal_f(z)f(u)]\w v_1\w v_2\cr\cr
&=&(\Dcal_f(z)u\w v_1+v_1\w \Dcal_f(z)f(u))\w v_2\cr
&=&(\Dcal_f(z)u\w v_1+v_1\w \Dcal_f(z)f(u))\w v_2\cr &=&u\w
v_1+z\Dcal_f(z)f(u) \w v_1+v_1\w \Dcal_f(z)f(u))\w v_2\cr &\succeq&
u\w (v_1\w v_2).
\end{eqnarray*}
\qed

\begin{definition}\label{def:defsd}
The \textbf{Schubert derivation} of $\tbw_\Acal\V $ is the unique
HS--derivation  $\sigma_+(z):=\sum_{i\geq 0}\sigma_iz^i\in \End
\,(\tbw_\Acal\V )[[z]]$ satisfying
$$
\sigma_+(z)x^j=\sum_{i\geq 0}x^{j+i}z^i,\qquad (x^{k}=0 \,\,
\mathrm{if} \,\, k\geq n),
$$
and
$$
\sigma_+(z)(u\w v)=\sigma_+(z)u\w \sigma_+(z)v\in\tbw_\Acal\V [[z]].
$$
\end{definition}

\begin{proposition}
For all $i\geq 1$,
$$
\sigma_i(u\w v)=\sum_{j=0}^i\sigma_ju\w\sigma_{i-j}v.
$$
\end{proposition}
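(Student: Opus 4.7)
The plan is to read off the stated identity as the coefficient of $z^i$ in the Leibniz-type identity $\sigma_+(z)(u\wedge v) = \sigma_+(z)u \wedge \sigma_+(z)v$ that is built into the very definition of the Schubert derivation (Definition~\ref{def:defsd}). So the content to verify is really just that the formal power series expansion commutes with $\wedge$ and that coefficient extraction is legal in the semialgebra of power series over $\widetilde{\bigwedge}_\Acal\V$.

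First, I would expand the right-hand side as a product of power series:
\begin{equation*}
\sigma_+(z)u \wedge \sigma_+(z)v \;=\; \Bigl(\sum_{j\geq 0} (\sigma_j u)\, z^j\Bigr) \wedge \Bigl(\sum_{k\geq 0} (\sigma_k v)\, z^k\Bigr) \;=\; \sum_{i\geq 0} \Bigl(\sum_{j=0}^{i} \sigma_j u \wedge \sigma_{i-j} v\Bigr)\, z^i,
\end{equation*}
using bilinearity of $\wedge$ and the Cauchy product. On the left-hand side, by Definition~\ref{def:defsd} and the expansion $\sigma_+(z)=\sum_{i\geq 0}\sigma_i z^i$,
\begin{equation*}
\sigma_+(z)(u\wedge v) \;=\; \sum_{i\geq 0} \sigma_i(u\wedge v)\, z^i.
\end{equation*}
Equating coefficients of $z^i$ on the two sides (which is valid since both are genuine power series in a single indeterminate $z$ with coefficients in $\widetilde{\bigwedge}_\Acal\V$) yields the claimed formula for each $i\geq 1$ (and also the trivial case $i=0$).

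The only delicate point is whether the identity in Definition~\ref{def:defsd} is an honest equality or merely a surpassing relation $\preceq$, since in the systemic setting many classical equalities are replaced by $\preceq$. Inspecting the statement of Definition~\ref{def:defsd}, the HS-derivation $\sigma_+(z)$ is \emph{defined} by the Leibniz identity on $\wedge$, so we do have equality there; hence the coefficient-by-coefficient identity is also an equality. If one wished to be cautious (mirroring Proposition~\ref{prop:prop514}), one could state and prove the formula with $\succeq$ replacing $=$, and the same Cauchy-product argument would go through verbatim using Condition~\ref{condi}(i)–(ii) for $\preceq$.

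The main (very minor) obstacle is notational rather than mathematical: making sure one is allowed to extract coefficients of $z^i$ in the power-series semialgebra $\widetilde{\bigwedge}_\Acal\V[[z]]$. Since $\{z^i:i\geq 0\}$ is a free family over $\widetilde{\bigwedge}_\Acal\V$ in this formal power-series module, two series agree iff all coefficients do, and the Cauchy product rearrangement involves only finite sums in each degree, so no convergence issues arise. With this remark in place, the proof is essentially the single display above together with matching coefficients.
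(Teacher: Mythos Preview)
Your proof is correct and follows essentially the same approach as the paper: both expand $\sigma_+(z)(u\wedge v)=\sigma_+(z)u\wedge\sigma_+(z)v$ as power series and match coefficients of $z^i$. Your additional remarks on why coefficient extraction is legitimate and why equality (rather than $\preceq$) holds are helpful clarifications, but the core argument is identical.
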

\proof \be \begin{aligned} u\w v+\sigma_1(u\w v)z & +\sigma_2(u\w
v)z^2+\cdots=c\\&(u+\sigma_1u\cdot z+\sigma_2u\cdot z^2+\cdots)\w
(v+\sigma_1v\cdot z+\sigma_2v\cdot z^2+\cdots)
\end{aligned}\ee
The result follows by matching the coefficients of the powers of
$z$. \qed

In particular
$
\sigma_1(u\w v)=\sigma_1u\w v+u\w \sigma_1v
$
shows that $\sigma_1$ is an $\Acal$ -derivation.
\begin{example}
$$
\sigma_3(x^3\w x^1)=x^6\w x^1+x^5\w x^2+x^4\w x^3+x^3\w x^4\succeq
x^6\w x^1+x^5\w x^2.
$$
\end{example}
\begin{definition}
Let $z,w$ be two formal variables. The respective generating series
of the bases $\bfx$ and ${\bm\d}$ are
$$
\bfx(z):=\sum_{i\geq 0}x^iz^i\in\V [[z]]\qquad \mathrm{and}\qquad
{\bm\d}(w^{-1})=\sum_{j\geq 0}\d_jw^{-j}\in \V ^*[[w^{-1}]].
$$
\end{definition}
\begin{remark}
Because of definition \ref{def:defsd} we have \be
\bfx(z)=\sigma_+(z)x^0 .\ee Moreover: \be
{\bm\d}(w^{-1})=\sigma_-(w)^T\d_0.\label{eq:for510} \ee In fact
$\sigma_-(z)=\sum_{i\geq 0}\sigma_{-i}w^{-i}$, where
$\sigma_{-i}x^j=x^{j-i}$ if $j\geq i$ and $0$ otherwise. Now, by
definition of transpose:
$$
\sigma_{-i}^T\d_j(x^k)=\d_j(\sigma_{-i}x^k)=\d_j(x^{k-i})=\d_{j+i}(x^k).
$$
Therefore:
$$
\sigma_-(z)^T\d_0=\sum_{j\geq
0}\sigma_{-j}^T\d^0w^{-j}=\sum_{j\geq 0}\d^jw^{-j}.
$$
\end{remark}

\begin{definition}
We denote by $\sigma_-(z), \ovsig_-(z):\widetilde{\bw\V }\sra \bw
\widetilde{ \V }[z^{-1}]$ the unique HS derivations on~$\tbw \V $
such that
$$
\sigma_-(z)x^j=\sum_{i=0}^j{x^{j-i}\over z^i}
$$
and
$$
\ovsig_-(z)x^j=x^j(-){x^{j-1}\over z}
$$
with the convention that $x^i=0$ if $i<0$.
\end{definition}

Notice that $\ovsig_-(z)x^0=\sigma_-(z)x^0=x^0$, i.e. $\ovsig_-(z)$
and $\sigma_-(z)$  act as the identity on $x^0$. By general facts
proved in \cite{GaR} we know that $\sigma_-(z)$ is a quasi-inverse
of $\ovsig_-(z)$, and as defined in this broader context we obtain
the other side. Let us check it again in this particular case. By
Proposition~\ref{prop:prop514}, the Schubert derivations
$\sigma_-(z)$ and $\ovsig_-(z)$ are quasi-inverses of each other. We
offer here an extra check to  focus better the proof of Proposition
\ref{prop:prop514}.
\begin{proposition}\label{prop424}
The Schubert derivations $\ovsig_-(z)$ and $\ovsig_-(z)$ are mutual
quasi-inverses, i.e.,
$$
\sigma_-(z)\ovsig_-(z)\wbx^r_\blamb\succeq \wbx^r_\blamb
$$
and
$$
\ovsig_-(z)\sigma_-(z)\wbx^r_\blamb\succeq \wbx^r_\blamb.
$$

\end{proposition}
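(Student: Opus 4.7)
The plan is to reduce the claim to the degree-one case, exploiting the fact that both $\sigma_-(z)$ and $\ovsig_-(z)$ are HS-derivations and that a composition of HS-derivations is again an HS-derivation (because $D_1D_2(u\w v)=D_1(D_2u\w D_2v)=D_1D_2u\w D_1D_2v$). Applied to $\wbx^r_\blamb=x^{r-1+\lambda_1}\w\cdots\w x^{\lambda_r}$, this factorizes $\sigma_-(z)\ovsig_-(z)\wbx^r_\blamb$ as the wedge of the factors $\sigma_-(z)\ovsig_-(z)x^{\lambda_k+r-k}$, reducing the problem to computing $\sigma_-(z)\ovsig_-(z)x^j$ and $\ovsig_-(z)\sigma_-(z)x^j$ for each basis vector $x^j$.

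I would next carry out this degree-one computation directly from the definitions:
\begin{equation*}
\sigma_-(z)\ovsig_-(z)x^j\ =\ \sigma_-(z)\bigl(x^j\,(-)\,z^{-1}x^{j-1}\bigr)\ =\ \sum_{i=0}^j z^{-i}x^{j-i}\ (-)\ \sum_{i=0}^{j-1} z^{-i-1}x^{j-1-i}.
\end{equation*}
After the reindexing $k=i+1$ in the second sum, the terms with $i\ge 1$ in the first sum match those of the second term-by-term, so the whole expression equals $x^j+\bigl(\sum_{i=1}^j z^{-i}x^{j-i}\bigr)^\circ$, which $\succeq x^j$ by clause (iii) of the surpassing relation in Definition~\ref{sursys0}. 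Distributing $\ovsig_-(z)$ through $\sigma_-(z)x^j=\sum_{i=0}^j z^{-i}x^{j-i}$ gives exactly the same formal expression (after reindexing and using $x^{-1}=0$) for $\ovsig_-(z)\sigma_-(z)x^j$, so the degree-one identity holds both ways.

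Finally, writing $\sigma_-(z)\ovsig_-(z)x^{\lambda_k+r-k}=x^{\lambda_k+r-k}+\alpha_k$ with each $\alpha_k$ a quasi-zero, expanding the wedge $\bigwedge_k(x^{\lambda_k+r-k}+\alpha_k)$ yields the pure term $\wbx^r_\blamb$ together with a sum of $2^r-1$ mixed terms, each containing at least one factor $\alpha_k$. The main obstacle is the verification that every such mixed term is itself a quasi-zero in $\widetilde{\bw}\V$: this rests on the identity $(\beta\,(-)\,\beta)\w w=(\beta\w w)\,(-)\,(\beta\w w)=(\beta\w w)^\circ$ (and its left-hand analogue), which is available because the negation map on $\bw^{\ge 2}\V$ extends compatibly to a semigroup endomorphism of the extended exterior system via Lemma~\ref{neg17}. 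Granted this, the sum of mixed terms is a quasi-zero, so $\sigma_-(z)\ovsig_-(z)\wbx^r_\blamb\succeq\wbx^r_\blamb$, and the parallel argument with the roles of $\sigma_-(z)$ and $\ovsig_-(z)$ exchanged yields $\ovsig_-(z)\sigma_-(z)\wbx^r_\blamb\succeq\wbx^r_\blamb$.
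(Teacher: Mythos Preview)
Your proof is correct and follows the same overall strategy as the paper: reduce to degree one via the HS-derivation property, verify the degree-one case explicitly, and then pass to general~$r$. The paper handles degree one by wedging against a test element $v=v_1\wedge v_2$ (which is how $\preceq$ on $\tbw^1 V$ is ultimately interpreted, via the regular representation into $\End_\Acal(\bw^{\ge 2}V)$), and then proceeds by a one-line induction on~$r$ using $\sigma_-(z)\ovsig_-(z)\wbx^r_\blamb=\sigma_-(z)\ovsig_-(z)x^{r-1+\lambda_1}\wedge\sigma_-(z)\ovsig_-(z)\wbx^{r-1}_{\blamb^{(1)}}$. You instead compute $\sigma_-(z)\ovsig_-(z)x^j=x^j+\alpha_j^\circ$ directly in $\tbw^1 V$ and expand the $r$-fold wedge, checking that each mixed term is a quasi-zero. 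These are equivalent implementations of the same idea; your binomial expansion makes the compatibility of $\preceq_\circ$ with~$\wedge$ explicit, while the paper's induction is terser and parallels the argument already given for Proposition~\ref{prop:prop514}.
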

\proof Let $v$ be any test element of the form $v_1\w v_2$ with
$v_1\in \V $ and $v_2$ arbitrary in $\tbw\V $. Then
\begin{eqnarray*}
\sigma_-(z)(\ovsig_+(z)x^i)\w v&=&\ovsig_-(z)\left(x^i-{x^{i-1}\over
z}\right)\w v\cr &=&\left(\sigma_-(z)x^i\w v_1+v_1\w {1\over
z}\sigma_-(z)x^{i-1}\right)\w v_2\cr &=&\left(x^i\w v_1+{1\over
z}\sigma_-(z)x^{i-1}\w v_1+\ v_1\w {1\over
z}\sigma_-(z)x^{i-1}\right) \w v_2\cr\cr
&\succeq& x^i\w v_1\w v_2=x^i\w v,
\end{eqnarray*}
which proves the property for $r=1$. If $r\geq 2$
$$
\sigma_-(z)(\ovsig_+(z)\wbx^r_\blamb=\sigma_-(z)\ovsig_-(z)x^{r-1+\lambda_1}\w
\sigma_-(z)\ovsig_-(z)\wbx^{r-1}_{\blamb^{(1)}}\succeq \wbx^r_\blamb,
$$
where $\blamb^{(1)}:=(\lambda_2\geq\ldots\geq \lambda_r)$.
Conversely
\begin{eqnarray*}
\ovsig_-(z)(\sigma_+(z)x^i)\w v_1\w v_2&=&(\sigma_+(z)x^i
(-)z^{-1}\sigma_+(z)x^{i-1})\w v_1\w v_2\cr &=&(x^i\w v_1+
z^{-1}\sigma_-(z) x^{i-1}\w v_1+v_1\w z^{-1}\sigma_+(z)x^{i-1})\w
v_2\cr &\succeq& x^i\w v_1\w v_2=x^i\w v.\hskip 160pt \qed
\end{eqnarray*}

\begin{theorem}\label{thm:thm313}
We have \be \sigma_+(z)x^0\w \wb^r_\blamb\preceq
z^r\sigma_+(z)\ovsig_-(z)\wbx^{r+1}_\blamb\label{eq5:510} \ee for all
$\blamb\in\Pcal_r$ such that $\lambda_1 \le n-r$.
\end{theorem}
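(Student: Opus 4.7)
The plan is to expand the right-hand side using the fact that $\sigma_+(z)$ and $\ovsig_-(z)$ are HS-derivations, then recognize $\sigma_+(z)x^0\w\wbx^r_\blamb$ on the nose, with any discrepancy landing in the ``${}^\circ$" ideal so that the $\preceq$ relation is what survives. Viewing $\blamb\in\Pcal_r$ as the length-$(r{+}1)$ partition $(\lambda_1,\ldots,\lambda_r,0)$, I first write
$$
\wbx^{r+1}_\blamb = x^{r+\lambda_1}\w x^{r-1+\lambda_2}\w\cdots\w x^{1+\lambda_r}\w x^0,
$$
where $\lambda_1\leq n-r$ guarantees the wedge is nonzero. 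Since the composition $\sigma_+(z)\ovsig_-(z)$ is again multiplicative on wedges, I can distribute it across all $r+1$ factors, using $\ovsig_-(z)x^0=x^0$.

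The key single-factor computation, for $j\geq 1$, is
$$
\sigma_+(z)\ovsig_-(z)x^j = \sigma_+(z)x^j\,(-)\,z^{-1}\sigma_+(z)x^{j-1} = (-)z^{-1}x^{j-1}+\bigl(\sigma_+(z)x^j\bigr)^\circ,
$$
obtained by reindexing $z^{-1}\sigma_+(z)x^{j-1}= z^{-1}x^{j-1}+\sigma_+(z)x^j$ and collecting terms. Consequently
$$
\sigma_+(z)\ovsig_-(z)x^j \succeq (-)z^{-1}x^{j-1},
$$
by Condition~\ref{condi1}(iii). Applying this to each of the $r$ nontrivial factors (with $j=r{+}1{-}i{+}\lambda_i$ for $i=1,\ldots,r$) and using that the wedge respects $\preceq$ in each slot yields
$$
\sigma_+(z)\ovsig_-(z)\wbx^{r+1}_\blamb \succeq (-)^r z^{-r}\,\wbx^r_\blamb\w \sigma_+(z)x^0.
$$
The signed switch rule $u\w v = (-) v\w u$ for degree-$1$ elements of the type-$1$ extended exterior semialgebra then lets me move $\sigma_+(z)x^0$ past the $r$ factors of $\wbx^r_\blamb$ at the cost of $(-)^r$, which cancels the prefactor $(-)^r$. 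Multiplying through by $z^r$ gives the claim.

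The main subtlety, rather than any single deep step, is tracking the surpassing relation throughout: each of the $r$ factors $\sigma_+(z)\ovsig_-(z)x^{r+1-i+\lambda_i}$ contributes its own quasi-zero remainder $(\sigma_+(z)x^{r+1-i+\lambda_i})^\circ$, and when these are distributed across the wedge every cross term must again lie in the ${}^\circ$-ideal so that the resulting excess over the claimed lower bound is a bona fide quasi-zero. This compatibility is precisely what the systemic framework (Proposition~\ref{Endsys} and the compatibility of $\preceq_\circ$ with the wedge inherited from Lemma~\ref{neg17}) is built to guarantee, so the delicate bookkeeping reduces to invocations of those axioms slot by slot.
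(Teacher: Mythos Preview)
Your proof is correct and takes a cleaner route than the paper's. The paper argues by induction on $r$: for $r=1$ it expands $z\,\sigma_+(z)\ovsig_-(z)(x^{\lambda+1}\w x^0)$ step by step until the term $\sigma_+(z)x^0\w x^\lambda$ splits off from a quasi-zero remainder; the inductive step then peels off the last two factors $x^{1+\lambda_r}\w x^0$, applies the $r=1$ case to them to produce $\sigma_+(z)x^0\w x^{\lambda_r}$, re-absorbs the resulting $x^0$ into the remaining $(r{-}1)$-factor wedge (using $\sigma_+(z)\ovsig_-(z)x^0=\sigma_+(z)x^0$), and invokes the hypothesis at level $r{-}1$. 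Your approach bypasses the induction by isolating the single-factor identity $\sigma_+(z)\ovsig_-(z)x^j=(-)z^{-1}x^{j-1}+(\sigma_+(z)x^j)^\circ$ once and then using monotonicity of the wedge under $\preceq_\circ$ across all $r$ nontrivial slots simultaneously, followed by the sign count. The paper's version keeps the quasi-zero bookkeeping explicit at each stage and never needs to state ``$\preceq_\circ$ is preserved by $\w$'' as a standalone fact; yours is shorter but rests on that compatibility, which (while true, since $a\w d^\circ=(a\w d)^\circ$ and sums of quasi-zeros are quasi-zeros) the paper does not isolate as a lemma---so your appeal to Proposition~\ref{Endsys} and Lemma~\ref{neg17} is a slight stretch of what those results literally say, even though the claim itself is easily verified.
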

\proof
We argue by induction. We first check the claim for $r=1$, i.e., by analyzing
$$
\sigma_+(z)x^0\w x^{\lambda},\qquad \lambda<n.
$$
We have
\begin{eqnarray*}
z\sigma_+(z)\ovsig_-(z)( x^{\lambda+1}\w
x^0)&=&z\sigma_+(z)(\ovsig_-(z)x^{\lambda+1}\w \ovsig_-(z)x^0)\cr\cr
&=&z\sigma_+(z)\left[\left(x^{\lambda+1}(-){x^\lambda\over z}\w
x^0\right)\right]\cr\cr &=&z\sigma_+(z)\left(x^{\lambda+1}\w
x^0+{1\over z} x^0\w x^\lambda\right)\cr\cr
&=&z\sigma_+(z)x^{\lambda+1}\w\sigma_+(z)x^0+\sigma_+(z)x^0\w
\sigma_+(z)x^\lambda\cr\cr &=&z\sigma_+(z)x^{\lambda+1}\w
\sigma_+(z)x^0+(\sigma_+(z)x^0\w\sigma_+(z)x^{\lambda+1})\cr\cr &=&
z\sigma_+(z)x^{\lambda+1}\w \sigma_+(z)x^0+ \sigma_+(z)x^0\w
x^\lambda+z\sigma_+(z)x^0\w\sigma_+(z)x^{\lambda+1}\cr\cr &\succeq&
 \sigma_+(z)x^0\w x^\lambda,
\end{eqnarray*}
and the property is verified for $r=1$. Assume now that $r-1\geq 1$.
Then:
\begin{eqnarray*}
&&z^r\sigma_+(z)\ovsig_-(z)\wbx^{r+1}_\blamb\cr\cr &=&z^{r-1}(\sigma_+(z)\ovsig_-(z)x^{r+\lambda_1}\w\cdots\w
x^{2+\lambda_{r-1}})\w z\sigma_+(z)\ovsig_-(z)(x^{1+\lambda_r}\w
x^0)\cr\cr &\succeq&
z^{r-1}\sigma_+(z)\ovsig_-(z)(x^{r+\lambda_1}\w\cdots\w
x^{2+\lambda_{r-1}})\w\sigma_+(z)x^0\w x^{\lambda_r}\cr\cr
&=&z^{r-1}\sigma_+(z)\ovsig_-(z)(x^{r+\lambda_1}\w\cdots\
x^{2+\lambda_{r-1}}\w x^0)\w x^{\lambda_r}\cr\cr &\succeq&
[\sigma_+(z)x^0\w x^{r-1+\lambda_1}\w\cdots\w x^{1+\lambda_{r-1}}]\w
x^{\lambda_r}=\sigma_+(z)x^0\w \wbx^r_\blamb.
\end{eqnarray*}
\qed
%

\begin{definition}{ The Lie semialgebra $\gl(\tbw^1_\Acal\Acal[x])\cong \tbw^1\V \otimes \tbw^1\V ^*$ acts on $\tbw\V $ as follows:
$$
(p\otimes\d)(u)=p\w (\d\lrcorner u).
$$
}
\end{definition}
A standard procedure in representation theory is using generating functions.
\begin{definition}\label{def:def716} Let $\bfx:=\{x^i : i\geq 0\}$ and ${\bm\d}:=(\{\d_j :  j\geq 0\}$ be the bases of $\V $ and $\V ^*$. The generating functions of $\bfx$ and ${\bm\d}$
are by definition:
$$
\bfx(z)=\sum_{i\geq 0}x^iz^i\qquad \mathrm{and}\qquad
{\bm\d}(w^{-1})=\sum_{i\geq 0}\d_jw^{-j}.
$$
\end{definition}
In particular $\bfx(z)\otimes {\bm\d}(w^{-1})$ defines a map
$\tbw^k_\Acal\V \sra \tbw^k_\Acal\V [[z, w^{-1}]]$. The image of a
basis element is prescribed as follows:
\begin{eqnarray}
(\bfx(z)\otimes {\bm\d}(w^{-1}))  \wbx^k_\blamb &:=&\bfx(z)\w ({\bm
\d}(w^{-1})\, \lrcorner \, \wbx^k_\blamb.
\end{eqnarray}

%
By definition
${\bm\d}(w^{-1})=\d^0+\d^1w^{-1}+\d^2w^{-2}+\cdots$.
So ${\bm\d}(w^{-1})\lrcorner x^i=w^{-i}$ for all $0\leq i<n$.
Now we apply Definition \ref{def:conr} of  contraction, obtaining
$$
\bfx(z)\w ({\bm \d}(w^{-1})\lrcorner
\wbx^k_\blamb=\bfx(z)\w
\begin{vmatrix}w^{-k+1-\lambda_1}&\cdots& w^{-\lambda_k}\cr\cr
x^{k-1+\lambda_1}&\cdots&x^{\lambda_k}\end{vmatrix}=\sigma_+(z)x^0\w
\begin{vmatrix}w^{-k+1-\lambda_1}&\cdots& w^{-\lambda_k}\cr\cr
x^{k-1+\lambda_1}&\cdots&x^{\lambda_k}\end{vmatrix},
$$
where in the last equality we used \ref{def:defsd}. \qed

\begin{example}
The image of $\wbx^3_{(3,2,1)}$ through the endomorphism $x^2\otimes
\d_3$  of $\V $ is easily computable by hand:
\begin{eqnarray*}
(x^2\otimes \d_3)\bfx^3_{(3,2,1)}=x^2\w (\d_3\lrcorner (x^5\w
x^3\w x^1))&=&x^2\w
\begin{vmatrix}\d_3(x^5)&\d_3(x^3)&\d_3(x^1)\cr\cr x^5&x^3&x^1
\end{vmatrix}\cr\cr\cr &=&x^2\w \begin{vmatrix}0&1&0\cr\cr
x^5&x^3&x^1\end{vmatrix}=x^2\w  x^1\w x^5.
\end{eqnarray*}
On the other hand $(x^2\otimes \d_3)\wbx^3_{(3,2,1)}$ should be
the coefficient of $z^2w^{-3}$ of the expansion of \be
\sigma_+(z)x^0\w
\begin{vmatrix} w^{-5}&w^{-3}&w^{-1}\cr x^5&x^3&x^1
\end{vmatrix}.\label{eq:gfex}
\ee
The coefficient of $w^{-3}$ in expression \eqref{eq:gfex}
$$
\sigma_+(z)x^0\w (x^1\w x^5)
$$
whose coefficient of $z^2$ is precisely $x^2\w x^1\w x^5$, as
expected.
%
\end{example}

\begin{remark}
Let us consider the standard bilinear form $B(x^i,x^j)=\delta_{ij}$. Recall by formula \eqref{eq:commbbe} that
$$
x^i\w \d^j\lrcorner \wbx^r_\blamb\succeq \delta_{ij}\wbx^r_\blamb
$$
which are Clifford semialgebra relations. Passing to the generating
series:
$$
\sum_{i,j\geq 0}x^iz^i\w \d^jw^{-j}\lrcorner \wbx^r_\blamb\succeq \delta_{ij}z^iw^{-j}\wbx^r_\blamb
$$
from which, remembering  Definition \ref{def:def716} for ${\bm\d}(w^{-1})$ and Definition \ref{def:defsd} for the Schubert derivation:
$$
\bfx(z)\w {\bm\d}(w^{-1})\lrcorner\wbx^r_{\blamb}\, +\,
{\bm\d}(w^{-1})\lrcorner(\bfx(z)\w \wbx^r_\blamb)\succeq \sum_{i\geq 0}{z^i\over w^i}\wbx^r_\blamb=\left(1-{z\over w}\right)^{-1}\wbx^r_\blamb
$$
a relation providing  the connection of the commutation rules of Schubert derivations  to Clifford semi Algebras.
\end{remark}

\begin{remark}
If $\Acal$ contains the positive rational numbers then
$$
\sigma_+(z)=\exp\left(\sum_{i\geq 1}{1\over
i}\delta(x^i)z^i\right),
$$
where left multiplication $x\cdot:\V \sra \V $ is   defined by
$x^i\mapsto x^{i+1}$ if $i< n-1$ and $x^{n-1}\mapsto 0$, and $\delta
:\End_\Acal(\V )\sra \End_\Acal (\bw\V )$ is defined as in
Example~\ref{ex58}.
\end{remark}

 \begin{theorem}\label{thm:thm515}  We use \eqref{eq:for510}. Then
$$
\sigma_-(w)^T\d_0\, \lrcorner \,
\wbx^r_\blamb=w^{-r+1}\ovsig_+(w)(\d_0\, \lrcorner \,
\sigma_-(w)\wbx^r_\blamb)
$$

\end{theorem}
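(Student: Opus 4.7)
The plan is to reduce both sides to concrete combinations on the basis element $\wbx^r_\blamb$ by unpacking the HS-derivation and transpose machinery established earlier, and then match them by induction on $r$, tracking the surpassing terms carefully.

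First, I would apply the transpose identity $\Dcal(z)^T(\d)\,\lrcorner\, u = \d\,\lrcorner\,\Dcal(z)u$ together with \eqref{eq:for510}, which shows that $\sigma_-(w)^T\d_0 = {\bm\d}(w^{-1})$. This rewrites the LHS as ${\bm\d}(w^{-1})\,\lrcorner\,\wbx^r_\blamb$, which via the determinantal form of Definition~\ref{def:conr} unpacks as $\sum_{s=1}^r(-)^{s-1} w^{-(r-s+\lambda_s)}\wbx^{r-1}_{\blamb\setminus s}$, the $s$-th factor deleted. On the RHS, the HS-derivation property of $\sigma_-(w)$ distributes it over the wedge as $\bigwedge_s \sigma_-(w)x^{r-s+\lambda_s}$, and the identity $\d_0(\sigma_-(w)x^j)=w^{-j}$ together with the skew-derivation rule for $\d_0\,\lrcorner\,$ produces $\d_0\,\lrcorner\,\sigma_-(w)\wbx^r_\blamb = \sum_{s=1}^r(-)^{s-1}w^{-(r-s+\lambda_s)}\bigwedge_{t\neq s}\sigma_-(w)x^{r-t+\lambda_t}$.

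Next I would argue by induction on $r$. The base case $r=1$ is immediate: both sides equal $w^{-\lambda_1}$ since $\ovsig_+(w)$ fixes elements of $\Acal$. For the inductive step, I would write $\wbx^r_\blamb = x^{r-1+\lambda_1}\w \wbx^{r-1}_{\blamb^{(1)}}$ with $\blamb^{(1)}=(\lambda_2,\ldots,\lambda_r)$, apply the skew-derivation rule to the LHS and the HS-derivation rule to the RHS, and reduce to the inductive hypothesis. The factor $w^{-r+1}$ on the RHS accumulates one $w^{-1}$ per additional wedge factor beyond the first, in parallel with the way the factor $z^r$ appears in Theorem~\ref{thm:thm313}. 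The operator $\ovsig_+(w)$ then reinstalls the correct coefficient on the reduced $(r-1)$-fold wedge by telescoping the expansion $\ovsig_+(w)\sigma_-(w)x^{j}$ through the quasi-inverse relation $\sigma_+(w)\ovsig_+(w)\succeq\id$ of Proposition~\ref{prop:prop514}.

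The main obstacle is a quintessentially systemic one: telescoping identities such as $\ovsig_+(w)\sigma_-(w)x^j = w^{-j}x^0\,(-)\,wx^{j+1}$ hold in the classical setting by exact cancellation, but in the semialgebra framework the cancellations only produce quasi-zeros, so the equality is valid only up to $\succeq$. One must verify that the accumulated quasi-zero corrections from iterating the telescoping in the inductive step do not obstruct the final equality, i.e.\ that they disappear when absorbed into the surpassing relation between the two sides. I would model this bookkeeping after the proofs of Proposition~\ref{prop424} and Theorem~\ref{thm:thm313}, where analogous surpassing identities are reached by testing both sides against an auxiliary factor $v_1\w v_2$ with $v_1\in\V$ arbitrary; this device makes the quasi-zero residues harmless and lets the inductive identity close.
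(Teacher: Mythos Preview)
Your opening moves match the paper's exactly: rewrite the LHS as ${\bm\d}(w^{-1})\,\lrcorner\,\wbx^r_\blamb$ via \eqref{eq:for510}, express it through the determinantal contraction of Definition~\ref{def:conr}, and observe that $\d^0(\sigma_-(w)x^j)=w^{-j}$ so that the top row of that determinant is literally the row $\big(\d^0(\sigma_-(w)x^{r-1+\lambda_1}),\ldots,\d^0(\sigma_-(w)x^{\lambda_r})\big)$. From this point on the two arguments diverge. The paper does \emph{not} induct on $r$; instead it recognizes that $\d^0\,\lrcorner\,\sigma_-(w)\wbx^r_\blamb$ is the \emph{same} determinant but with bottom row $\sigma_-(w)x^{r-s+\lambda_s}$ in place of $x^{r-s+\lambda_s}$, and then passes between the two determinants in one stroke by applying an HS-derivation uniformly to the bottom row and extracting the global factor $w^{-r+1}$ together with~$\ovsig_+(w)$. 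Your inductive scheme would also reach the goal, but it is more laborious and forces you to manage the quasi-zero residues step by step, whereas the paper's determinantal shortcut handles all $r$ columns simultaneously.

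Your diagnosis of the ``main obstacle'' is accurate and worth flagging: the interchange of $x^j$ with $\ovsig_-(w)\sigma_-(w)x^j$ (or its variants) is only a $\succeq$-relation by Proposition~\ref{prop424}, yet the theorem is stated as an equality. The paper's own chain of equalities glides over this point and even contains an apparent slip in the final displayed line (an extra $\ovsig_-(w)$ that does not appear in the statement). So the issue you raised is real; the determinantal route does not actually avoid it, it just hides it inside a single line rather than spreading it across an induction. If you pursue your approach, your proposed remedy of testing against an auxiliary wedge factor, as in Proposition~\ref{prop424} and Theorem~\ref{thm:thm313}, is the right instrument, but be aware that you will then have proved the identity only up to $\preceq$, which is what the paper effectively obtains as well.
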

\proof

\begin{eqnarray*}
\sigma_-(w)^T\d_0\, \lrcorner \, \wb^r_\blamb&=&{\bm\d}(w^{-1})\,
\lrcorner \,
\wb^r_\blamb=\begin{vmatrix}\d^0(\sigma_-(w)x^{r-1+\lambda_1})&\cdots&\d^0(\sigma_-(w)x^{\lambda_r})\cr\cr\cr
x^{r-1+\lambda_1}&\cdots&x^{\lambda_r}\end{vmatrix}\cr\cr\cr
&=&\begin{vmatrix}\d_0(\sigma_-(w)x^{r-1+\lambda_1})&\cdots&\d^0(\sigma_-(w)x^{\lambda_r})\cr\cr\cr
x^{r-1+\lambda_1}&\cdots&x^{\lambda_r}\end{vmatrix}\cr\cr\cr
&=&\ovsig_-(w)\begin{vmatrix}\d^0(\sigma_-(w)x^{r-1+\lambda_1})&\cdots&\d^0(\sigma_-(w)x^{\lambda_r})\cr\cr
\sigma_-(w)x^{r-1+\lambda_1}&\cdots&\sigma_-(w)x^{\lambda_r}\end{vmatrix}=\cr\cr\cr
&=&{1\over w^{r-1}}\ovsig_+(w)\ovsig_-(w)(\d^0\, \lrcorner \,
\sigma_-(w)[\bfb]^r_\blamb)\cr
\end{eqnarray*}

\claim{\bf The  explicit description of the $\tbw_\Acal\V $
representation of $\gl(\tilde T^1(\V ))$.} Theorem \ref{thm432}
below gives a first version of the structure of $\tbw\V $ as a
representation of $\gl(\tbw^1\V )$. To this purpose we need a few
preliminaries.
\begin{lemma}\label{lem633} For all $\blamb\in \Pcal_r$,
\be x^0\w [\bfb]^r_{\blamb+(1^r)}\preceq{1\over
w^r}\ovsig_+(w)\sigma_-(w)\wbx^{r}_\blamb\w x^0 \ee

\end{lemma}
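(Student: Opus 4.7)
The strategy is a direct computation, exploiting the fact that both $\sigma_-(w)$ and $\ovsig_+(w)$ are HS-derivations (Definition~\ref{def:defHSD}), so their composition $\ovsig_+(w)\sigma_-(w)$ distributes over the wedge product. The goal is to expand the right-hand side, identify the single ``generic'' summand and check that every other summand either vanishes or is a quasi-zero.

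First I would compute $\ovsig_+(w)\sigma_-(w)x^j$ explicitly for each $j\geq 0$. Using $\sigma_-(w)x^j=\sum_{i=0}^j x^{j-i}w^{-i}$ and the termwise action $\ovsig_+(w)x^k = x^k(-)wx^{k+1}$, the intermediate terms telescope in $\preceq_\circ$-style, yielding
\begin{equation*}
\ovsig_+(w)\sigma_-(w)x^j \;=\; (-)wx^{j+1}\;+\;w^{-j}x^0\;+\;\sum_{k=1}^{j-1}\bigl(w^{-k}x^{j-k}\bigr)^{\!\circ},
\end{equation*}
so in particular $\ovsig_+(w)\sigma_-(w)x^j\succeq (-)wx^{j+1}+w^{-j}x^0$, and crucially the quasi-zero remainder involves only $x^1,\ldots,x^j$, never $x^0$.

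Next, by the HS-derivation property applied to $\wbx^r_\blamb = x^{r-1+\lambda_1}\wedge\cdots\wedge x^{\lambda_r}$, we get
$$\ovsig_+(w)\sigma_-(w)\wbx^r_\blamb \wedge x^0 \;=\; \Bigl[\bigwedge_{k=1}^r \ovsig_+(w)\sigma_-(w)\,x^{r-k+\lambda_k}\Bigr]\wedge x^0.$$
Expanding this product of wedges, each of the $r$ factors contributes one of three pieces: (a) the ``main'' piece $(-)wx^{r-k+\lambda_k+1}$, (b) the ``$x^0$'' piece $w^{-(r-k+\lambda_k)}x^0$, or (c) a quasi-zero piece. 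Any summand taking a (b)-piece in some factor yields a wedge product containing two copies of $x^0$ (one from that factor, one from the trailing $x^0$); such a wedge vanishes, since adjacent swaps of factors (the negation map in $\tbw\V$) bring the two copies of $x^0$ together and $x^0\wedge x^0\sim 0$ is imposed in the type~1 exterior semialgebra. Any summand taking a (c)-piece in some factor produces a quasi-zero, because $u^\circ\wedge v = (u\wedge v)^\circ$.

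The unique surviving non-quasi-zero summand comes from choosing the (a)-piece in every factor:
\begin{equation*}
\bigwedge_{k=1}^r [(-)wx^{r-k+\lambda_k+1}]\wedge x^0 \;=\; (-)^r w^r \wbx^r_{\blamb+(1^r)}\wedge x^0.
\end{equation*}
Moving $x^0$ past the $r$ preceding factors is $r$ adjacent transpositions, each contributing a $(-)$, so $\wbx^r_{\blamb+(1^r)}\wedge x^0 = (-)^r x^0\wedge\wbx^r_{\blamb+(1^r)}$. The two factors of $(-)^r$ collapse to the identity, and multiplying by $w^{-r}$ cancels the $w^r$. Thus
$$\tfrac{1}{w^r}\ovsig_+(w)\sigma_-(w)\wbx^r_\blamb \wedge x^0 \;=\; x^0\wedge\wbx^r_{\blamb+(1^r)} \;+\; (\text{quasi-zeros}),$$
which is precisely the $\preceq$-relation claimed. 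The main technical obstacle is the bookkeeping that guarantees two copies of $x^0$ annihilate a wedge product regardless of their positions, but this is immediate from the combination of the negation-map swap rule in $\tbw\V$ and the defining relation $x^i\wedge x^i\sim 0$ of the type~1 exterior semialgebra.
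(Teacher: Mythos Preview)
Your argument is correct and takes a genuinely different route from the paper. The paper proceeds by induction on~$r$: the base case $r=1$ starts from the quasi-inverse relation $x^{1+\lambda}\preceq\ovsig_-(w)\sigma_-(w)x^{1+\lambda}$ (Proposition~\ref{prop424}), wedges with $x^0$, and then an explicit manipulation of the resulting finite sum rewrites $x^0\wedge\ovsig_-(w)\sigma_-(w)x^{1+\lambda}$ as $\tfrac{1}{w}\ovsig_+(w)\sigma_-(w)x^{\lambda}\wedge x^0$; the inductive step peels off $x^0\wedge x^{r+\lambda_1}$ using the $r=1$ case, then applies the hypothesis to the remaining $r-1$ factors. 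You instead expand the right-hand side in one shot via the HS-derivation property and identify the left-hand side as the unique non-vanishing, non-quasi-zero summand. Your approach makes the source of the surpassing transparent (every excess term is visibly a quasi-zero or contains a repeated~$x^0$), while the paper's induction hides this inside the quasi-inverse lemma. Both yield the same $\preceq$-relation with roughly equal work.

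One small correction: your displayed formula for $\ovsig_+(w)\sigma_-(w)x^j$ has an off-by-one in the quasi-zero sum. For $j=1$ your sum is empty, but direct computation gives $\ovsig_+(w)\sigma_-(w)x^1=(-)wx^2+w^{-1}x^0+(x^1)^\circ$; in general the sum should run $\sum_{k=0}^{j-1}(w^{-k}x^{j-k})^\circ$ so that the $(x^j)^\circ$ term is included. This is harmless for your argument, since the only property you use is that the quasi-zero part is a quasi-zero (and, incidentally, you do not actually need the observation that it avoids $x^0$: a quasi-zero wedged with anything is a quasi-zero regardless).
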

\proof
For $r=1$ one has
\begin{eqnarray*}
x^0\w \wbx^1_{(1+\lambda)}&=&x^0\w x^{1+\lambda}\preceq
x^0\w\ovsig_-(w)\sigma_-(w)x^{1+\lambda}=x^0\w
\ovsig_-(w)\left(\sum_{j=0}^{1+\lambda}{x^{1+\lambda-j}\over
w^j}\right)\cr &=&x^0\w \sum_{j=0}^{1+\lambda}{1\over
w^j}\ovsig_-(w)x^{1+\lambda-j}=x^0\w \sum_{j=0}^{1+\lambda}{1\over
w^j}\left(x^{1+\lambda-j}(-){x^{\lambda-j}\over w}\right)\cr\cr
&=&{1\over w}\left(\sum_{j=0}^\lambda{1\over
w^j}\ovsig_+(w)x^{\lambda-j}+{x^0\over w^{\lambda+1}}\right)\w
x^0\cr\cr &=&{1\over
w}\left(\ovsig_+(w)\sigma_-(w)x^\lambda+{x^0\over
w^{\lambda+1}}\right)\w x^0={1\over
w}(\ovsig_+(w)\sigma_-(w)x^\lambda)\w x^0
\end{eqnarray*}
as desired. For $r>1$ we argue by induction. Suppose the formula
holds for $r-1\geq 1$. Then
\begin{eqnarray*}
x^0\w \wbx^r_{\blamb+(1^r)}&=&x^0\w x^{r+\lambda_1}\w
x^{r-1+\lambda_2}\w\cdots\w x^{1+\lambda_r}\cr&\preceq&{1\over w
}\ovsig_+(w)\sigma_-(w)x^{r-1+\lambda_1}\w x^0 \w
\wbx^{r-1}_{(r-1+\lambda_2,\ldots,1+\lambda_r)}\cr\cr &=&{1\over
w^r}\ovsig_+(w)\sigma_-(w) x^{r-1+\lambda_1}\w
\ovsig_+(w)\sigma_-(w)(x^{r-2+\lambda_2}\w\cdots\w x^{\lambda_r})\w
x^0\cr\cr &=&{1\over w^r}\ovsig_+(w)\sigma_-(w)\wbx^r_\blamb\w
x^0.
\end{eqnarray*}

\qed
\begin{lemma}\label{lem:lemma433}
The following commutation rule holds:
$$
\ovsig_-(z)\ovsig_+(w)[\wbx]^r_\blamb\w
x^0=\ovsig_+(w)\ovsig_-(z)\wbx^r_\blamb\w x^0
$$
\end{lemma}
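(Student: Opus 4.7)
The plan is to exploit that both $\ovsig_-(z)$ and $\ovsig_+(w)$ are Hasse--Schmidt derivations on $\tbw_\Acal\V$ and therefore distribute over the wedge product, reducing the claimed commutation to a direct check on single basis vectors of $\V$.

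First I would observe that a composition of two HS-derivations is again a HS-derivation: given $D_1, D_2$ both satisfying $D_i(u\w v) = D_iu \w D_iv$, one has $D_1 D_2(u\w v) = D_1(D_2 u \w D_2 v) = D_1 D_2 u \w D_1 D_2 v$, where $D_1$ is extended coefficient-wise to the power-series ring in which $D_2$ takes values. Thus both $\ovsig_-(z)\circ\ovsig_+(w)$ and $\ovsig_+(w)\circ\ovsig_-(z)$ are HS-derivations $\tbw_\Acal\V \to \tbw_\Acal\V[[z^{-1},w]]$. Applied factor by factor to $\wbx^r_\blamb\w x^0 = x^{r-1+\lambda_1}\w\cdots\w x^{\lambda_r}\w x^0$, the identity reduces to showing $\ovsig_-(z)\ovsig_+(w)x^j = \ovsig_+(w)\ovsig_-(z)x^j$ for each index $j\in\{r-1+\lambda_1,\dots,\lambda_r,0\}$.

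Next I would perform this single-generator check. By the definition of $\ovsig_-$ and by Proposition~\ref{prop:prop514} applied to $\sigma_+$, the short-form expressions are $\ovsig_-(z)x^j = x^j (-) z^{-1}x^{j-1}$ and $\ovsig_+(w)x^j = x^j (-) w\,x^{j+1}$, with the convention $x^{-1}=0$. Computing both compositions and absorbing signs via the involutivity $(-)((-)a)=a$ gives the common value
$$\ovsig_-(z)\ovsig_+(w)x^j \;=\; x^j \;(-)\; w\,x^{j+1} \;(-)\; z^{-1}x^{j-1} \;+\; z^{-1}w\,x^j \;=\; \ovsig_+(w)\ovsig_-(z)x^j,$$
so the two orders of composition agree on $\V$. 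Re-assembling via the HS-derivation property of the first step propagates this factor-wise equality across $\wbx^r_\blamb\w x^0$, giving the stated identity. The only technical subtlety is the coefficient-wise extension of each HS-derivation into the formal power series ring where the other takes values (so that the compositions are well defined); once that is set up, the factor-by-factor match yields equality rather than merely a surpassing relation.
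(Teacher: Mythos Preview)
Your reduction via the Hasse--Schmidt property is exactly the paper's strategy, and your computation for $j\ge 1$ is correct. The gap is at $j=0$. Your displayed ``common value'' relies on the term $z^{-1}w\,x^j$ arising symmetrically on both sides, but it does not: on the $\ovsig_+(w)\ovsig_-(z)$ side that term comes from $\ovsig_+(w)$ acting on $z^{-1}x^{j-1}$, and when $j=0$ the convention $x^{-1}=0$ kills it. Concretely,
\[
\ovsig_-(z)\ovsig_+(w)x^0 = x^0\,(-)\,wx^1 + wz^{-1}x^0,
\qquad
\ovsig_+(w)\ovsig_-(z)x^0 = x^0\,(-)\,wx^1,
\]
so the two operators do \emph{not} commute on $x^0$. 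This is precisely why the lemma is stated with the trailing $\w\,x^0$: the discrepancy is a multiple of $x^0$, and wedging with $x^0$ annihilates it. Your write-up also treats $x^0$ as a factor on which the HS-derivation acts, whereas in the lemma the operators are applied only to $\wbx^r_\blamb$ and the $\w\,x^0$ is applied afterwards; even so, the case $\lambda_r=0$ forces you to confront the $j=0$ failure.

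The fix is short: keep your argument for the factors $x^{r-1+\lambda_1},\dots,x^{1+\lambda_{r-1}}$ (all indices $\ge 1$), and handle the last factor together with the appended $x^0$. One checks directly that $(\ovsig_-(z)\ovsig_+(w)x^{\lambda_r})\w x^0 = (\ovsig_+(w)\ovsig_-(z)x^{\lambda_r})\w x^0$ for every $\lambda_r\ge 0$, since the extra $wz^{-1}x^0$ dies against $x^0$. This is exactly the base case the paper isolates before its induction.
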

\proof
We first prove the case $r=1$. If $\lambda>0$ one has
\begin{eqnarray*}
\ovsig_-(z)\ovsig_+(w)x^\lambda=\ovsig_-(z)(x^\lambda(-)x^{\lambda+1}z)&=&\sigma_-(w)\ovsig_+(z)\wbx^r_\blamb\w
x^0\cr\cr &=&\left(1+{z\over w}\right)x^\lambda
(-)\left(x^{\lambda+1}+{1\over w}x^{\lambda-1}\right)\cr\cr
&=&\ovsig_+(w)\ovsig_-(z)x^\lambda
\end{eqnarray*}
from which it is obvious that
$\ovsig_+(z)\sigma_-(w)x^\lambda\w x^0=\sigma_-(w)\ovsig_+(z)x^\lambda\w x^0$. It remains to check the commutations of the two oprators against $x^0$.
In this case
$$
\ovsig_+(z)\ovsig_-(w)x^0\w x^0=x^0\w x^1z=\ovsig_-(w)\ovsig_+(z)x^0\w x^0
$$
as a straightforward check shows. It remains to check the general
case for which we argue by induction. Suppose the property holds for
$r-1\geq 0$. Then
\begin{eqnarray*}
&&\ovsig_-(z)\ovsig_+(w)\wbx^r_\blamb\w x^0\cr\cr
&=&\ovsig_-(z)\ovsig_+(w)x^{r-1+\lambda_1}\w\cdots\w
\ovsig_-(z)\ovsig_+(w)x^{1+\lambda_{r-1}}\w
(\ovsig_-(z)\ovsig_+(w)x^{\lambda_r}\w x^0)\cr\cr
&=&\ovsig_-(z)\ovsig_+(w)x^{r-1+\lambda_1}\w\cdots\w
\ovsig_-(z)\ovsig_+(w)x^{1+\lambda_{r-1}}\w
(\ovsig_+(w)\ovsig_-(z)x^{\lambda_r}\w x^0)\cr\cr
&=&(-)\ovsig_-(z)\ovsig_+(w)x^{r-1+\lambda_1}\w\cdots\w
\ovsig_-(z)\ovsig_+(w)x^{1+\lambda_{r-1}}\w x^0)
\w\ovsig_+(w)\ovsig_-(z) x^{\lambda_r}
\end{eqnarray*}
from which, by induction
\begin{eqnarray*}
&(-)&(\ovsig_+(w)\ovsig_-(z)x^{r-1+\lambda_1}\w\cdots\w
\ovsig_+(w)\ovsig_-(z)x^{1+\lambda_{r-1}}\w x^0)\w
\ovsig_+(w)\ovsig_-(z) x^{\lambda_r}\cr\cr
&=&\ovsig_+(w)\ovsig_-(z)x^{r-1+\lambda_1}\w\cdots\w
\ovsig_+(w)\ovsig_-(z)x^{1+\lambda_{r-1}}\w \ovsig_+(w)\ovsig_-(z)
x^{\lambda_r}\w x^0\cr\cr &=&\ovsig_+(w)\ovsig_-(z)\wbx^r_\blamb\w
x^0\hskip250pt \qed
\end{eqnarray*}

\section{The Main Theorem}\label{8mnthm}

In this section we will present and prove  our main theorem after relating it to previous  literature in the classical context of $\QQ$-algebras. To
begin with, a rather easy, though non trivial,  observation shows that the polynomial
ring $B:=\QQ[x_1,x_2,\ldots]$ in infinitely many indeterminates $\bfx=(x_1, x_2,\ldots)$ is a module over the
Lie algebra $gl_\infty(\QQ)=\bigoplus_{i,j\in\ZZ} \QQ\cdot E_{ij}$, where $E_{ij}:\ZZ\times \ZZ\sra \QQ$ is
the elementary matrix with all entries zero but $1$ in position $(i,j)$. This fact
comes from the fact that the ring $B$ is isomorphic to a suitable projective  limit, $\underset{r}\varprojlim \bw^r\QQ[X]$, in the category of graded $\QQ$-algebras, called {\em Fermionic Fock space}. The
 result then basically follows from the fact that a) the polynomial ring in $r<\infty$
variables is a vector space isomorphic to the $r$-th exterior power of $\QQ[X]$ and
b) the latter is a module over $gl(\QQ[X]):=\bigoplus_{i,j\in\NN} \QQ\cdot E_{ij}$.

It is more difficult to give an explicit description of $B$ as a
representation of $gl_ \infty(\QQ)$. This was achieved in the 1980's
by Date, Jimbo, Kashiwara and Miwa in \cite{DJKM}, who computed the
action of the generating function $E(z,w)=
\sum_{i,j\in\ZZ}E_{i,j}z^iw^{-j}$ on a polynomial in the variables
$x_1,x_2,\ldots$, in such a way that to know the product $E_{ij}p$
  amounts to looking at the coefficient of $z^iw^{-j}$ in the expansion
of $E(z,w)p$, for all $p\in B$. The description relies on some
vertex operators which first arose in the Skyrme model \cite{skyrme}
of the interaction of a meson--like field. The result by Date,
Jimbo, Kashiwara and Miwa was put in a more general framework in the
paper \cite{GaSa2}, where a generating function of the action of the
elementary matrices on the ring $B_r$ of polynomials in $r<\infty$
indeterminates has been computed. In our case partial derivatives
with respect to the variables $(x_1,x_2,\ldots)$ are no longer
available anymore for describing the module structure. This is an
issue which has compelled us to find suitable substitutes. These are
provided by the essential use of the Schubert derivations as defined
in  e.g. \cite{SDIWP} and in Section \ref{def:defsd} in the context
of exterior semialgebras. The elementary matrix $E_{i,j}$ can be
identified, using our notation, with the basis element $x^i\otimes
\d^j$ of $V\otimes V^*$, and then the generating function $E(z,w)$
is nothing but $ \bfx(z)\otimes {\bm \d}(w^{-1})$, acting on
$\bw^rV$ according to the rule
$$
\bfx(z)\otimes {\bm\d}(w^{-1}) \wbx^r_\blamb=\bfx(z)\w {\bm\d}(w^{-1})\lrcorner
\wbx^r_\blamb.
$$
The last member can be computed as $\sigma_+(z)x^0\w {\bm
\d}(w^{-1}) \w\wbx^r_\blamb$, and this was done in \cite{GaSa2}
(with substantial improvement in the preprint \cite{BCM}), where the
generating function of the Clifford algebra basis element
$x^i\otimes \d^j$ against $\wbx^r_\blamb$, $x^i\w (\d^j\lrcorner
\wbx^r_\blamb)$ was computed. The resulting expression explicitly
involves
  Schubert derivations which, when $r$ goes to $
\infty$, take the shape of the vertex operators  occurring in the
DJKM  representation of $gl_\infty(\QQ)$,  as   shown e.g.~in
\cite{SDIWP}. Our main Theorem \ref{thm432} shows that the same
results hold in the semialgebra context and then it is a substantial
generalization of \cite{GaSa2}, which motivates further
investigation to check if even the more general pictures as in
\cite{BeCoGaVi,BeGa} can be reproduced in the more general  systemic
framework.

Therefore our main result  is the following transparent tropical
version of \cite[Theorem 6.4]{GaSa2} for the Grassmann semialgebra,
describing precisely the  multiplication of an elementary matrix by
a basis element of $\tbw_\Acal\V$.

\begin{theorem}\label{thm432}  For all $r\geq 1$ the following formula
holds:
\be\begin{aligned} \bfx(z)&\w {\bm\d}(w^{-1})\lrcorner
\wbx^r_\blamb=\sigma_+(z)x^0\w (\sigma_-(w)^T\d^0\, \lrcorner \,
\wbx^r_\blamb) \\& \preceq z^{r-1}w^{r-1}\begin{vmatrix}
w^{-r+1-\lambda_1}&\cdots&w^{-\lambda_r}&0\cr\cr
\sigma_+(z)\ovsig_+(w)\ovsig_-(z)\sigma_-(w)x^{r+
\lambda_1}&\cdots&\sigma_+(z)\ovsig_+(w)\ovsig_-(z)\sigma_-(w)x^{1+
\lambda_r}&\sigma_
+(z)x^0
\end{vmatrix} \\ \\
&= {z^{r-1}\over w^{r-1}}\sigma_+(z)\begin{vmatrix}
w^{-r+1-\lambda_1}&\cdots&w^{-\lambda_r}&0\cr\cr
\ovsig_+(w)\ovsig_-(z)\sigma_-(w)x^{r+
\lambda_1}&\cdots&\ovsig_+(w)\ovsig_-(z)\sigma_-(w)x^{1+\lambda_r}
&x^0 \end{vmatrix}.
\cr\cr && \label{eq:prevo} \end{aligned}\ee
\end{theorem}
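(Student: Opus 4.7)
The first equality merely applies the definitions $\bfx(z)=\sigma_+(z)x^0$ and ${\bm\d}(w^{-1})=\sigma_-(w)^T\d^0$ (see \eqref{eq:for510}). The second equality extracts $\sigma_+(z)$ from the determinant: because $\sigma_+(z)$ is an HS-derivation, it distributes over the wedge products encoded by the Leibniz-style determinant of Definition \ref{def:conr}, while leaving the scalar top row untouched.

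For the middle $\preceq$-relation, the plan is to rewrite $\sigma_+(z)x^0\w\bigl(\sigma_-(w)^T\d^0\lrcorner\wbx^r_\blamb\bigr)$ in four controlled steps. First, invoke Theorem \ref{thm:thm515} to replace $\sigma_-(w)^T\d^0\lrcorner\wbx^r_\blamb$ by $w^{-(r-1)}\ovsig_+(w)(\d^0\lrcorner\sigma_-(w)\wbx^r_\blamb)$. Second, expand the inner contraction as a determinant via Definition \ref{def:conr}; using that the HS-derivations $\sigma_-(w)$ and $\ovsig_+(w)$ each distribute across wedges, this produces a determinant whose scalar top row has entries $w^{-(r-j+\lambda_j)}$ and whose bottom row has vector entries $\ovsig_+(w)\sigma_-(w)x^{r-j+\lambda_j}$. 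Third, prepend $\sigma_+(z)x^0$ on the left and push it through to the right end by iterating Theorem \ref{thm:thm313}: each Leibniz summand, an $(r-1)$-wedge, becomes an $r$-wedge obtained by applying $\sigma_+(z)\ovsig_-(z)$ to the index-shifted vectors and appending $x^0$, at the cost of a factor $z^{r-1}$. This precisely introduces the additional $(r{+}1)$-th column (top entry $0$, bottom entry $x^0$) of the target determinant. Fourth, apply Lemma \ref{lem:lemma433}, which --- thanks to the trailing $x^0$ --- permits swapping $\ovsig_-(z)$ with $\ovsig_+(w)$, thereby rearranging the composition on each bottom entry into the canonical order $\sigma_+(z)\ovsig_+(w)\ovsig_-(z)\sigma_-(w)$.

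The main obstacle is Step 3: Theorem \ref{thm:thm313} is proved for basis monomials $\wbx^r_\blamb$, whereas the Leibniz expansion produces wedges of the transformed vectors $\ovsig_+(w)\sigma_-(w)x^{r-j+\lambda_j}$. This is handled by direct computation from the definitions of $\sigma_-(w)$ and $\ovsig_+(w)$, which telescopes to the two-term expression $w^{-(r-j+\lambda_j)}x^0\,(-)\,w\,x^{r-j+1+\lambda_j}$ up to quasi-zeros; distributing the wedges and applying Theorem \ref{thm:thm313} term by term to the remaining basis monomials completes the push-through, while the stray $x^0$-contributions recombine consistently to supply the trailing $x^0$ in the final column. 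Secondary care is needed to verify that each of the four rewrites preserves the surpassing relation $\preceq$ (since strict equality is unavailable in the semialgebra setting), and that the combined $w$- and $z$-prefactors $w^{-(r-1)}\cdot z^{r-1}$ match the target prefactor $z^{r-1}/w^{r-1}$.
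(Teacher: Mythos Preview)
Your overall architecture reverses the order in which the paper combines the two main ingredients, and that reversal is exactly what creates the obstacle you flag in Step~3.

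The paper does \emph{not} begin with Theorem~\ref{thm:thm515}. Instead it first expands the contraction directly as the determinant with untransformed bottom row $x^{r-1+\lambda_1},\dots,x^{\lambda_r}$, then applies Theorem~\ref{thm:thm313} to $\sigma_+(z)x^0\wedge(\cdots)$, where each Leibniz summand is a genuine basis wedge $\wbx^{r-1}_\bmu$ and the theorem applies on the nose. Only \emph{after} this step are the $w$-derivations inserted, via Proposition~\ref{prop424} (which gives $\wbx^r_\bmu\preceq\ovsig_-(w)\sigma_-(w)\wbx^r_\bmu$, the correct direction), and then Lemma~\ref{lem633} and Lemma~\ref{lem:lemma433} are used to rearrange into the target form. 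In this order, Theorem~\ref{thm:thm313} never has to be applied to anything other than basis monomials.

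Your workaround for Step~3 has a direction problem. The telescoping identity you invoke actually reads
\[
\ovsig_+(w)\sigma_-(w)x^m \;=\; w^{-m}x^0\,(-)\,w\,x^{m+1}\;+\;\sum_{i=0}^{m-1}\bigl(x^{m-i}/w^i\bigr)^{\!\circ},
\]
so the two-term expression is \emph{below} (in~$\preceq$) the full one, not equal to it. Replacing each factor by the smaller two-term expression gives a wedge that is $\preceq$ the original; applying Theorem~\ref{thm:thm313} to that smaller wedge bounds the smaller object from above, not the original. The sentence ``the stray $x^0$-contributions recombine consistently'' is doing all the remaining work, and in the surpassing-relation setting this requires showing that the quasi-zero discrepancies on the left match those on the right, which you do not establish. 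It may be salvageable by keeping the full (non-telescoped) expansion and checking that the images under Theorem~\ref{thm:thm313} reassemble term by term into the full expansion of the target, but that is a substantial bookkeeping argument that is neither sketched nor obviously shorter than the paper's route. Swapping the order---Theorem~\ref{thm:thm313} first, then Proposition~\ref{prop424} to introduce $\ovsig_-(w)\sigma_-(w)$---eliminates the difficulty.
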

The formula means that the image of the element $\wbx^r_\blamb$
through $x^i\otimes\d^j$ is surpassed by the coefficient of
$z^iw^{-j}$ in expression \eqref{eq:prevo}.

\proof We have
$$
 \bfx(z)\w {\bm\d}(w^{-1})\lrcorner
\wbx^r_\blamb
$$

\medskip
\begin{tabular}{rlll}
&$=$&$\sigma_+(z)x^0\w
\begin{vmatrix}w^{-r+1-\lambda_1}&\cdots&w^{-\lambda_r}\cr\cr
x^{r-1+\lambda_1}&\cdots&x^{\lambda_r}
\end{vmatrix}$& (definition of $\sigma_+(z)$ and
\cr
&&& Proposition \ref{prop:prop514})\cr\cr
&$\preceq$&$z^{r-1}\sigma_+(z)\ovsig_-(z)\begin{vmatrix}w^{-r+1-\lambda_1}&
\cdots&w^{-\lambda_r}&0\cr\cr
x^{r+\lambda_1}&\cdots&x^{1+\lambda_r}&x^0
\end{vmatrix}$&(Theorem \ref{thm:thm313})
\end{tabular}

\begin{tabular}{rlll}
&$\preceq$&
$z^{r-1}\sigma_+(z)\ovsig_-(z)\begin{vmatrix}w^{-r-1-\lambda_1}&\cdots&w^{-\lambda_r}
&0\cr\cr
\ovsig_-(w)\sigma_-(w)
x^{r+\lambda_1}&\cdots&\ovsig_-(w)\sigma_-(w)x^{1+\lambda_r}&x^0
\end{vmatrix}$&(Proposition \ref{prop424})
\end{tabular}

\begin{tabular}{rlll}
&$\preceq$&$\displaystyle{z^{r-1}\over w^{-r+1}}\sigma_+(z)\begin{vmatrix}w^{-r-1-
\lambda_1}&\cdots&w^{-\lambda_r}&0\cr\cr
\ovsig_-(z)\ovsig_+(w)\sigma_-(w)
x^{r+\lambda_1}&\cdots&\ovsig_-(z)\ovsig_+(w)\sigma_-(w)x^{1+\lambda_r}&x^0
\end{vmatrix}$&(Theorem \ref{thm:thm515})\cr\cr
&$=$&$\displaystyle{z^{r-1}\over w^{-r+1}}\sigma_+(z)\begin{vmatrix}w^{-r-1-
\lambda_1}&\cdots&w^{-\lambda_r}&0\cr\cr
\ovsig_+(w)\ovsig_-(z)\sigma_-(w)
x^{r+\lambda_1}&\cdots&\ovsig_+(w)\ovsig_-(w)\sigma_-(w)x^{1+\lambda_r}&x^0
\end{vmatrix}$&(Lemma \ref{lem633})
\end{tabular}

\bigskip
or, using the fact that $\sigma_+(z)$ is a HS derivation
$$
\displaystyle{z^{r-1}\over w^{-r+1}}\begin{vmatrix}w^{-r-1-\lambda_1}&\cdots&w^{-
\lambda_r}&0\cr\cr
\sigma+(z)\ovsig_+(w)\ovsig_-(z)\sigma_-(w)
x^{r+\lambda_1}&\cdots&\sigma_+(z)\ovsig_+(w)\ovsig_-(w)\sigma_-(w)x^{1+\lambda_r}&
\sigma_+(z)x^0
\end{vmatrix}
$$
\qed

\end{document}